\documentclass[10pt,reqno]{amsart}

\usepackage[T1]{fontenc}
\usepackage[utf8]{inputenc}
\usepackage{lmodern}

\usepackage{microtype}

\usepackage{comment}

\usepackage{amsmath}
\usepackage{amssymb}
\usepackage{amsthm}
\usepackage{mathtools}

\usepackage{bm}

\usepackage{mathrsfs}

\usepackage{enumitem}

\usepackage{graphicx}

\usepackage{booktabs}

\usepackage{xcolor}

\usepackage[colorinlistoftodos]{todonotes}

\usepackage[numbers]{natbib}
\usepackage[colorlinks=true, linkcolor=blue, citecolor=blue, urlcolor=blue]{hyperref}

\usepackage[nameinlink,capitalise,noabbrev]{cleveref}

\usepackage[a4paper,top=2.5cm,bottom=2.5cm,left=2.5cm,right=2.5cm,marginparwidth=2.5cm]{geometry}

\allowdisplaybreaks

\numberwithin{equation}{section}

\theoremstyle{plain}

\newtheorem{theorem}{Theorem}[section]

\newtheorem{lemma}[theorem]{Lemma}

\theoremstyle{definition}

\theoremstyle{remark}

\newtheorem{remark}[theorem]{Remark}

\crefname{theorem}{theorem}{theorems}
\Crefname{theorem}{Theorem}{Theorems}

\crefname{proposition}{proposition}{propositions}
\Crefname{proposition}{Proposition}{Propositions}

\crefname{lemma}{lemma}{lemmas}
\Crefname{lemma}{Lemma}{Lemmas}

\crefname{corollary}{corollary}{corollaries}
\Crefname{corollary}{Corollary}{Corollaries}

\crefname{definition}{definition}{definitions}
\Crefname{definition}{Definition}{Definitions}

\crefname{assumption}{assumption}{assumptions}
\Crefname{assumption}{Assumption}{Assumptions}

\crefname{condition}{condition}{conditions}
\Crefname{condition}{Condition}{Conditions}

\crefname{remark}{remark}{remarks}
\Crefname{remark}{Remark}{Remarks}

\crefname{example}{example}{examples}
\Crefname{example}{Example}{Examples}

\crefname{question}{question}{questions}
\Crefname{question}{Question}{Questions}

\crefname{equation}{equation}{equations}
\Crefname{equation}{Equation}{Equations}

\crefname{section}{section}{sections}
\Crefname{section}{Section}{Sections}

\newcommand{\R}{\mathbb{R}}
\newcommand{\N}{\mathbb{N}}

\newcommand{\inner}[2]{\langle #1, #2\rangle}
\newcommand{\biginner}[2]{\bigg\langle #1, #2\bigg\rangle}

\DeclareMathOperator{\tr}{tr}
\DeclareMathOperator{\Hess}{Hess}

\title[Grassmannian Brascamp--Lieb inequalities]
{From the Sphere to the Grassmannian:
Continuous Brascamp--Lieb Inequalities
via Measure Approximation and Heat Flow}

\author[M. Li]{Minguo Li}

\address{
Graduate School of Mathematical Sciences,
The University of Tokyo,
Tokyo, Japan
}

\email{minguoli@g.ecc.u-tokyo.ac.jp}

\date{}

\begin{document}

\begin{abstract}
We study continuous Brascamp--Lieb inequalities associated with isotropic measures on the sphere $S^{n-1}$ and on the Grassmannian $G(n,k)$.
We give two proofs of the continuous Grassmannian inequality.
The first proof approximates the continuous isotropic measure by discrete isotropic measures and then applies the equal-rank geometric Brascamp--Lieb inequality.
In the rank-one case, the construction of the discrete isotropic measures follows from Barthe's work, which is standard, whereas in the higher-rank case the construction of finite approximate isotropic measures is given by a Tchakaloff-type theorem.
Weak convergence, geometric Brascamp--Lieb inequality and Fatou's lemma then yield the continuous inequalities.
The second proof generalizes the heat-semigroup preservation method of Barthe and Huet to isotropic measures on $G(n,k)$.
We prove a heat-semigroup preservation theorem in the Grassmannian setting and show that this theorem yields a Grassmannian Brascamp--Lieb inequality.
At the end, we also give a sketch for the mixed-rank case of the continuous Brascamp--Lieb inequality.
\end{abstract}

\subjclass[2020]{Primary 26D15; Secondary 52A40, 35K05}

\keywords{continuous Brascamp--Lieb inequality, geometric Brascamp--Lieb inequality, Grassmannian, isotropic measure, measure approximation, heat semigroup}

\maketitle

\section{Introduction}
\label{sec:sec1-introduction}

The \textit{Brascamp--Lieb inequality} provides a common framework for several classical integral inequalities, including H\"older's inequality, the Loomis--Whitney inequality, and Young's convolution inequality.
For finite-dimensional Euclidean spaces $E,E_1,\dots,E_m$, surjective linear maps $B_i:E\to E_i$, exponents $c_i\ge 0$, and nonnegative measurable functions $f_i:E_i\to[0,\infty)$, it takes the form
	\begin{equation}
         \int_E\prod_{i=1}^m f_i(B_i x)^{c_i}\,dx
         \le
         \mathbf{BL}\prod_{i=1}^m\left(\int_{E_i}f_i(y)\,dy\right)^{c_i}.
         \label{eq:sec1-general-bl}
	\end{equation}
The best constant $\mathbf{BL}$ is associated with the datum $(E,(E_i)_i,(B_i)_i,(c_i)_i)$.
This constant is usually called the \textit{Brascamp--Lieb constant} and this datum is usually called the \textit{Brascamp--Lieb datum}.
Brascamp and Lieb and also Beckner introduced this general class of inequalities, Lieb established the fundamental role of Gaussian inputs, and Bennett, Carbery, Christ, and Tao developed its finiteness and structural theory; see \cite{Beckner1975,BCCT,BrascampLieb1976,Lieb1990}.
Next we introduce a special case, which plays a very important role in the structural theory.

\subsection{Geometric and continuous Brascamp--Lieb inequalities}
\label{subsec:sec1-geometric-continuous-bl}

The geometric Brascamp--Lieb inequality is the special case in which the maps are partial isometries and their associated orthogonal projections decompose the identity.
The rank-one case of this inequality was first proved by Keith Ball~\cite{Ball1989} to solve problems in convex geometry.
For example, the volume ratio problem of convex bodies.
This inequality was later generalized to mixed rank case by Barthe~\cite{Barthe1998}.

Let $E=\R^n$ and $E_i = \R^{n_i}$ for any positive integer $n$ and $n_i \le n$. 
If
	\begin{equation}
        B_iB_i^*=I_{n_i}
        ~~
        \text{for all }
        1\le i \le m,
        \quad
        \text{and}
        \quad
        \sum_{i=1}^m c_iB_i^*B_i=I_n,
        \label{eq:sec1-geometric-condition}
	\end{equation}
then the Brascamp--Lieb constant in \eqref{eq:sec1-general-bl} is equal to $1$; see \cite{Ball1989,Barthe1998,BCCT,CLL2004}.
For equal-rank data, let $V_i\in G(n,k)$ (here it is easier to understand if we say that each $V_i$ is a $k$-dimensional linear subspace of $\R^n$, we say $V_i\in G(n,k)$ to unify the language) and define $B_i:\R^n\to V_i$ by $B_ix=P_{V_i}x$, where the projection $P_{V_i}$ is regarded as an operator on $\R^n$.
Then $B_iB_i^*=I_{V_i}$ and $B_i^*B_i=P_{V_i}$.
The geometric condition then becomes
	\begin{equation}
         \sum_{i=1}^m c_iP_{V_i}=I_n,
         \qquad c_i>0,
         \label{eq:sec1-equal-rank-geometric-condition}
	\end{equation}
and the corresponding inequality is
	\begin{equation}
         \int_{\R^n}\prod_{i=1}^m f_i(P_{V_i}x)^{c_i}\,dx
         \le
         \prod_{i=1}^m\left(\int_{V_i}f_i(y)\,d\lambda_{V_i}(y)\right)^{c_i}.
         \label{eq:sec1-equal-rank-geometric-bl}
	\end{equation}
Here $\lambda_{V_i}$ denotes the $k$-dimensional Lebesgue measure on $V_i$ induced by the Euclidean structure of $\R^n$.
In the rank-one case, each $V_i$ is generated by a unit vector $u_i\in S^{n-1}$, and $P_{V_i}=u_i\otimes u_i$, where $(u\otimes u)x=\inner{x}{u}u$.
The corresponding identity decomposition to the rank-one geometric inequality is $\sum_i c_i u_i\otimes u_i=I_n$.
Equivalently, the discrete measure $\sum_i c_i\delta_{u_i}$ on $S^{n-1}$ satisfies an isotropy condition.
This observation leads naturally from finite geometric data to measures on the sphere.

A finite Borel measure $\mu$ on $S^{n-1}$ is called \textit{isotropic} if
	\begin{equation}
	    \int_{S^{n-1}}u\otimes u\,d\mu(u)=I_n.
	    \label{eq:sec1-spherical-isotropy}
	\end{equation}
Taking traces in \eqref{eq:sec1-spherical-isotropy} gives $\mu(S^{n-1})=n$.
Barthe replaced the discrete geometric datum by such an isotropic measure and obtained the continuous rank-one Brascamp--Lieb inequality \cite{Barthe2004}.
We use the conventions $\log 0=-\infty$ and $e^{-\infty}=0$ whenever inputs are allowed to vanish.

\begin{theorem}[Barthe \cite{Barthe2004}]
    \label{thm:sec1-forward-contin-rank-one-bl}
    Let $\mu$ be an isotropic finite Borel measure on $S^{n-1}$.
    Suppose that $F:S^{n-1}\times\R\to(0,\infty)$ is continuous and that
	\[
	    f_u(s)=\mathbf 1_{[\alpha(u),\beta(u)]}(s)F(u,s),
	\]
    where $\alpha(u)<\beta(u)$ for every $u\in S^{n-1}$, $\alpha$ is either real-valued and continuous or identically equal to $-\infty$, and $\beta$ is either real-valued and continuous or identically equal to $+\infty$.
    Assume further that there exists a nonnegative function $\mathcal F\in L^1(\R)\cap L^\infty(\R)$ such that $f_u(s)\le\mathcal F(s)$ for every $(u,s)\in S^{n-1}\times\R$.
    Then
	\begin{equation}
         \int_{\R^n}
             \exp\left(\int_{S^{n-1}}\log f_u\bigl(\inner{x}{u}\bigr)\,d\mu(u)\right)
         \,dx
         \le
         \exp\left(\int_{S^{n-1}}\log\left(\int_{\R}f_u(s)\,ds\right)\,d\mu(u)\right).
         \label{eq:sec1-forward-contin-rank-one-bl}
	\end{equation}
    The inner integral on the left-hand side is allowed to take the value $-\infty$.
\end{theorem}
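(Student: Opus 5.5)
The plan is to follow the first strategy announced in the abstract. We approximate $\mu$ by discrete isotropic measures, apply the rank-one geometric inequality \eqref{eq:sec1-equal-rank-geometric-bl} (with $k=1$) to each, and pass to the limit.

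\emph{Step 1: discrete approximation.} We construct finitely supported measures $\mu_j=\sum_i c_i^{(j)}\delta_{u_i^{(j)}}$ on $S^{n-1}$ with $\sum_i c_i^{(j)}u_i^{(j)}\otimes u_i^{(j)}=I_n$ and $\mu_j\to\mu$ weakly. Following Barthe, we proceed in three moves.
\begin{itemize}
\item Partition $S^{n-1}$ into Borel cells of diameter $<1/j$, pick a point in each cell, and let $\nu_j$ carry the cell masses at those points. Then $\nu_j\to\mu$ weakly and $A_j:=\int u\otimes u\,d\nu_j\to I_n$.
\item Correct isotropy by pushing $\nu_j$ forward under $u\mapsto A_j^{-1/2}u/|A_j^{-1/2}u|$, with weights $|A_j^{-1/2}u|^2$. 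This gives exact isotropy.
\item Since $A_j\to I_n$, this map and these weights converge uniformly to the identity and to $1$, so weak convergence is preserved.
\end{itemize}

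\emph{Step 2: the discrete inequality.} For each $j$, the geometric inequality with $B_i x=\inner{x}{u_i^{(j)}}$ gives
\[
\int_{\R^n}\exp\Bigl(\int\log f_u(\inner{x}{u})\,d\mu_j(u)\Bigr)dx\le\exp\Bigl(\int\log\Bigl(\int_\R f_u\Bigr)d\mu_j(u)\Bigr).
\]

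\emph{Step 3: passage to the limit.} On the right, the map $u\mapsto\int_\R f_u=\int_{\alpha(u)}^{\beta(u)}F(u,s)\,ds$ is continuous. This follows from dominated convergence with dominant $\mathcal F$, together with continuity of $F,\alpha,\beta$. The map is strictly positive because $F>0$ and $\alpha<\beta$, and bounded by $\|\mathcal F\|_1$. Hence $\log\int f_u$ is a bounded continuous function on the compact sphere, and the right-hand side converges to that for $\mu$.

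On the left, for fixed $x$ set $g_x(u)=\log f_u(\inner{x}{u})$. It takes values in $[-\infty,\log\|\mathcal F\|_\infty]$ and is upper semicontinuous in $u$:
\begin{itemize}
\item $\log F$ is continuous.
\item The indicator $\mathbf 1_{[\alpha(u),\beta(u)]}(\inner{x}{u})$ is upper semicontinuous, since it is the indicator of a closed set (by continuity of $\alpha,\beta$, or trivially when they are infinite).
\end{itemize}
Upper semicontinuity and boundedness above, combined with the portmanteau theorem, give $\limsup_j\int g_x\,d\mu_j\le\int g_x\,d\mu$. This is the wrong direction for Fatou.

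\emph{Main obstacle.} The failing step is the left-hand side, and there are three places where the argument could go. First, one could choose $\mu_j$ depending more carefully on $\mu$, for instance as conditional averages, so that Jensen's inequality gives $\int g_x\,d\mu\le\liminf\int g_x\,d\mu_j$ pointwise. Second, one could write $\exp(\int g_x d\mu)$ as the limit of the integrands for $\mu_j$ on the set where $g_x$ is finite and continuous. Third, for $x$ in the set $\{x:\int g_x\,d\mu>-\infty\}$, one can use $\alpha(u)<\inner{x}{u}<\beta(u)$ to get $\mu$-a.e. continuity. I would try to show that on this set the discontinuity set of $g_x$ is $\mu$-null. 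The indicator is discontinuous only at the boundary $\{\inner{x}{u}=\alpha(u)\text{ or }\beta(u)\}$. Where that boundary carries $\mu$-mass, $g_x$ already equals $-\infty$ on the closed part, so points with positive mass there contribute zero. Where it is null, portmanteau gives genuine convergence $\int g_x\,d\mu_j\to\int g_x\,d\mu$, truncating $g_x$ from below at $-M$ and then letting $M\to\infty$.

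Given pointwise $\liminf_j$ of the integrands bounded below by the $\mu$-integrand for a.e. $x$, Fatou's lemma yields
\[
\int_{\R^n}\exp\Bigl(\int g_x\,d\mu\Bigr)dx\le\liminf_j(\text{left-hand side for }\mu_j)\le\text{the right-hand side for }\mu,
\]
which is \eqref{eq:sec1-forward-contin-rank-one-bl}.
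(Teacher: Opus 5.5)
\textbf{There is a genuine gap: the left-hand side limit is never established.} Your Steps 1 and 2 and your treatment of the right-hand side are sound. You also correctly diagnose that upper semicontinuity of $g_x$ only gives $\limsup_j\int g_x\,d\mu_j\le\int g_x\,d\mu$. What is missing is the lower bound $\liminf_j\int g_x\,d\mu_j\ge\int g_x\,d\mu$ for a.e.\ $x$ when $\alpha$ or $\beta$ is finite. Your third route does not deliver it, for two reasons.
\begin{itemize}
\item Your claim that $g_x$ equals $-\infty$ on the boundary set $\{u:\langle x,u\rangle\in\{\alpha(u),\beta(u)\}\}$ is false. The interval $[\alpha(u),\beta(u)]$ is closed, so there $g_x(u)=\log F(u,\langle x,u\rangle)$ is finite. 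The $x$ for which this set has positive $\mu$-mass do form a Lebesgue-null set, by Fubini, but not for the reason you give.
\item Truncating from below goes the wrong way. $\mu$-a.e.\ continuity makes $\max(g_x,-M)$ bounded and $\mu$-a.e.\ continuous, so $\int\max(g_x,-M)\,d\mu_j\to\int\max(g_x,-M)\,d\mu$. But since $g_x\le\max(g_x,-M)$, this again bounds $\int g_x\,d\mu_j$ only from above.
\end{itemize}
A lower bound needs $\mu_j$ to give no mass to the open set $\{g_x=-\infty\}$, and $\mu$-nullity of that set does not ensure this. The atoms of $\mu_j$ lie close to $\operatorname{supp}\mu$ but in general not in it, and the isotropy correction moves them in any case. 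Suppose $\operatorname{supp}\mu$ touches $\{\langle x,u\rangle=\beta(u)\}$ in a $\mu$-null set while $\langle x,u\rangle>\beta(u)$ just outside the support. Then nothing prevents $\int g_x\,d\mu_j=-\infty$ for every $j$, even though $\int g_x\,d\mu>-\infty$. The first two routes you list are too vague to assess, and Jensen would need a concavity in $u$ that $g_x$ does not have.

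\textbf{How the paper avoids this.} The paper never feeds discontinuous inputs into the discrete inequality.
\begin{itemize}
\item It first treats $\alpha\equiv-\infty$, $\beta\equiv+\infty$. There $u\mapsto\log F(u,\langle x,u\rangle)$ is continuous and bounded on $S^{n-1}$, so weak convergence gives convergence of the exponent at every $x$, and Fatou applies at once.
\item It then approximates $f_u$ from above by continuous, strictly positive, dominated inputs $\varphi_\delta F+\eta e^{-s^2}$. Here $\varphi_\delta$ is a continuous cutoff equal to $1$ on $[\alpha(u),\beta(u)]$ and vanishing outside its $\delta$-enlargement.
\item It lets $\eta\downarrow0$ and then $\delta\downarrow0$. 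The left-hand integrands decrease, so monotone convergence applies in exactly the needed direction. Uniform positive lower and finite upper bounds on the fibre integrals give dominated convergence on the right.
\end{itemize}

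\textbf{Repairing your direct route.} The missing observation is convexity. If $\int g_x\,d\mu>-\infty$, then $x$ lies in the closed convex set
\[
A=\{x:\alpha(u)\le\langle x,u\rangle\le\beta(u)\text{ for all }u\in\operatorname{supp}\mu\},
\]
whose boundary is Lebesgue-null. For $x\in\operatorname{int}A$, the constraints hold with a uniform margin on a neighbourhood of $\operatorname{supp}\mu$. For large $j$ this neighbourhood contains the atoms of $\mu_j$. Hence $\int g_x\,d\mu_j=\int\log F(u,\langle x,u\rangle)\,d\mu_j(u)\to\int g_x\,d\mu$ there, and Fatou applies. Either this argument or the paper's cutoff argument is needed, and the proposal contains neither.
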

In the same paper, Barthe also proved a reverse form of the preceding inequality.
The present paper considers only forward Brascamp--Lieb inequalities, so we do not state the reverse inequality here.
For further results on reverse Brascamp--Lieb inequalities, we refer the reader to~\cite{Barthe1998,Barthe2004,CourtadeLiu2021}.

Before proceeding, we briefly comment on the hypotheses imposed on the input functions in Theorem~\ref{thm:sec1-forward-contin-rank-one-bl}.
The joint continuity of $F$ and the continuity of the endpoint functions $\alpha$ and $\beta$ ensure the required measurability and continuity with respect to $u$. 
The common domination in $L^1(\R)\cap L^\infty(\R)$ provides the uniform bounds needed for the limiting arguments and, after applying heat-flow, for differentiation under the integral sign.
At the same time, this class of inputs is sufficiently broad for several applications in convex geometry.
Lutwak, Yang, and Zhang observed that their volume inequalities for isotropic measures can also be obtained from Barthe's continuous Brascamp--Lieb inequality~\cite{LutwakYangZhang2007}---a basic class of input functions arising from intersections of symmetric strips is obtained, for a fixed $t>0$, by taking
	\[
	    F(u,s)=1,
	    \qquad
	    \alpha(u)=-t,
	    \qquad
	    \beta(u)=t,
	    \qquad
	    \mathcal F(s)=\mathbf 1_{[-t,t]}(s).
	\]
A Gaussian version,
	\[
	    f_u(s)=\mathbf 1_{[-t,t]}(s)e^{-s^2/2},
	\]
was used by Sadovsky and Zhang to derive a Gaussian-measure comparison for origin-symmetric convex bodies in John's position~\cite{SadovskyZhang2025}.
Brazitikos and Liakopoulos applied a refinement of the continuous Brascamp--Lieb inequality with the full-support inputs
	\[
	    f_u(s)=\exp\left(-\frac{|s|^p}{d}\right)
	\]
to estimate volumes of $L_p$-type bodies arising in the study of visibility and transversality~\cite{BrazitikosLiakopoulos2025}.
All of the inputs above satisfy the hypotheses of Theorem~\ref{thm:sec1-forward-contin-rank-one-bl}.

Barthe's proof uses the continuous Ball--Barthe inequality \cite{Barthe2004}.
One purpose of the present paper is to recover Theorem~\ref{thm:sec1-forward-contin-rank-one-bl} directly from the rank-one geometric inequality by approximating $\mu$ with discrete isotropic measures.

\subsection{The continuous Grassmannian inequality}
\label{subsec:sec1-grassmannian-main-results}

The equal-rank geometric inequality suggests replacing the unit sphere by the Grassmannian $G(n,k)$ of $k$-dimensional linear subspaces of $\R^n$.
For $V\in G(n,k)$, let $P_V:\R^n\to\R^n$ be the orthogonal projection onto $V$.
A finite Borel measure $\mu$ on $G(n,k)$ is called \textit{isotropic} if
	\begin{equation}
	    \int_{G(n,k)}P_V\,d\mu(V)=I_n.
	    \label{eq:sec1-grassmannian-isotropy}
	\end{equation}
Since $\tr P_V=k$, taking traces in \eqref{eq:sec1-grassmannian-isotropy} yields $\mu(G(n,k))=n/k$.
The input family is naturally regarded as a function on the tautological bundle
	\[
	    T_{n,k}:=\{(V,y)\in G(n,k)\times\R^n:y\in V\}.
	\]
We give two proofs of a Grassmannian Brascamp--Lieb inequality.
The first uses measure approximation, while the second is based on heat-semigroup preservation.
Li, Xi, and Zhang established Grassmannian analogues of the Ball--Barthe inequality and observed that these inequalities could be used to derive a Grassmannian Brascamp--Lieb inequality; see~\cite{LiXiZhang2017}.
We do not pursue this approach here.

Our first main statement concerns strictly positive continuous inputs.

\begin{theorem}
    \label{thm:sec1-forward-grassmannian-bl}
    Let $1\le k\le n$, and let $\mu$ be an isotropic Borel measure on $G(n,k)$.
    Suppose that $F:T_{n,k}\to(0,\infty)$ is continuous and that there exists a function $\mathcal F:[0,\infty)\to[0,\infty)$ satisfying
	\begin{equation}
	    \mathcal F\in L^\infty([0,\infty)),
	    \qquad
	    \int_0^\infty\mathcal F(r)r^{k-1}\,dr<\infty,
	    \qquad
	    F(V,y)\le\mathcal F(|y|).
	    \label{eq:sec1-grassmannian-radial-majorant}
	\end{equation}
    Then
	\begin{equation}
	    \int_{\R^n}
	        \exp\left(\int_{G(n,k)}\log F(V,P_Vx)\,d\mu(V)\right)
	    \,dx
	    \le
	    \exp\left(
	        \int_{G(n,k)}
	            \log\left(\int_VF(V,y)\,d\lambda_V(y)\right)
	        \,d\mu(V)
	    \right).
	    \label{eq:sec1-forward-grassmannian-bl}
	\end{equation}
    Here, $\lambda_V$ denotes the $k$-dimensional Lebesgue measure on $V$ induced by the Euclidean structure of $\R^n$.
\end{theorem}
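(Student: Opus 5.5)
We prove \cref{thm:sec1-forward-grassmannian-bl} by approximating $\mu$ with finite isotropic measures and then applying the equal-rank geometric inequality \eqref{eq:sec1-equal-rank-geometric-bl} to each approximant. Fatou's lemma handles the left-hand side in the limit, and weak convergence handles the right-hand side.

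\textbf{Step 1: discrete isotropic approximants.} We construct finitely supported measures $\mu_j=\sum_i c_{ij}\delta_{V_{ij}}$ on $G(n,k)$ with $c_{ij}>0$, $\int P_V\,d\mu_j=I_n$, and $\mu_j\to\mu$ weakly. The plan is as follows.

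\begin{itemize}
\item Partition the compact manifold $G(n,k)$ into Borel pieces $A_{j,l}$ of diameter less than $1/j$.
\item On each piece, use a Tchakaloff/Carathéodory argument. The vector $\int_{A_{j,l}}P_V\,d\mu$ lies in the cone generated by $\{P_V:V\in A_{j,l}\}$, a subset of a finite-dimensional space of symmetric matrices. It is therefore a finite positive combination $\sum_i c_i P_{V_i}$ with $V_i\in \overline{A_{j,l}}$, and the same trace identity gives total mass $\mu(A_{j,l})$.
\item Summing over $l$ gives exact isotropy.
\item For weak convergence, note that each piece's mass stays within distance $1/j$ of the piece. Hence $|\int g\,d\mu_j-\int g\,d\mu|\le \omega_g(1/j)\,n/k$ for every continuous $g$, where $\omega_g$ is the modulus of continuity of $g$.
\end{itemize}

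In the rank-one case, one could alternatively cite Barthe's construction.

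\textbf{Step 2: apply the discrete inequality.} For each $j$, set $f_{ij}(y)=F(V_{ij},y)$ on $V_{ij}$. These functions are integrable by the radial majorant: $\int_V\mathcal F(|y|)\,d\lambda_V=|S^{k-1}|\int_0^\infty \mathcal F(r)r^{k-1}dr=:M<\infty$. Inequality \eqref{eq:sec1-equal-rank-geometric-bl} then gives
\[
\int_{\R^n}\exp\Bigl(\int\log F(V,P_Vx)\,d\mu_j(V)\Bigr)dx\le \exp\Bigl(\int \log \Phi(V)\,d\mu_j(V)\Bigr),\qquad \Phi(V):=\int_V F(V,y)\,d\lambda_V(y).
\]

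\textbf{Step 3: the right-hand side.} We check that $\Phi$ is continuous and strictly positive on $G(n,k)$.
\begin{itemize}
\item Continuity: write $V=gV_0$ with $g\in O(n)$, so that $\Phi(gV_0)=\int_{V_0}F(gV_0,gz)\,dz$. Dominated convergence with the majorant $\mathcal F(|z|)$ shows this is continuous in $g$, hence $\Phi$ is continuous on the quotient.
\item Positivity: $F>0$, so $\Phi>0$.
\end{itemize}
Hence $\log\Phi$ is continuous and bounded on $G(n,k)$, and weak convergence gives convergence of the right-hand side to that of \eqref{eq:sec1-forward-grassmannian-bl}.

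\textbf{Step 4: the left-hand side (main obstacle).} For fixed $x$, the map $V\mapsto\log F(V,P_Vx)$ is continuous on $G(n,k)$, since $(V,P_Vx)\in T_{n,k}$ depends continuously on $V$. It is bounded above by $\log\|\mathcal F\|_\infty$. It is finite everywhere, with a finite lower bound by compactness, although that bound depends on $x$. It is therefore a bounded continuous function, so weak convergence gives pointwise convergence of the integrands $\exp(\int\log F(V,P_Vx)\,d\mu_j)$ for each $x$. Fatou's lemma then gives
\[
\mathrm{LHS}(\mu)\le\liminf_j \mathrm{LHS}(\mu_j)\le \lim_j \mathrm{RHS}(\mu_j)=\mathrm{RHS}(\mu).
\]
No uniform domination of the left-hand integrands is needed.

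The delicate point is really Step 1: ensuring the Carathéodory atoms can be chosen inside $\overline{A_{j,l}}$ with positive weights. This holds because $\int_{A}P_V\,d\mu/\mu(A)$ lies in the closed convex hull of the compact set $\{P_V:V\in\overline A\}$, which equals the convex hull by compactness. Carathéodory's theorem in dimension $n(n+1)/2$ then gives at most $n(n+1)/2+1$ atoms per piece. Pieces with $\mu(A_{j,l})=0$ are simply discarded.
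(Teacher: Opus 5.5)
Your proposal is correct and follows the paper's route: exact local quadrature on small Borel cells produces discrete isotropic approximants $\mu_j\rightharpoonup\mu$, the equal-rank geometric inequality is applied to each $\mu_j$, Fatou's lemma handles the left-hand side, and weak convergence against the bounded continuous function $\log\Phi$ handles the right-hand side. The differences are confined to sub-steps, and your versions are all valid:
\begin{itemize}
\item You replace the Sch\"afer--Ullrich Tchakaloff theorem by Carath\'eodory's theorem applied to the compact convex hull of $\{P_V:V\in\overline{A}\}$. Nodes in the closure suffice for weak convergence, and the trace identity gives mass preservation without adding the constant function.
\item You prove continuity of the fiber integral through the transitive $O(n)$-action and the compact-to-Hausdorff quotient map, instead of the paper's polar-decomposition isometries $O_j$.
\item You leave measurability of the limiting left-hand integrand implicit. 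It holds because that integrand is a pointwise limit of continuous functions.
\end{itemize}
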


We also prove the following compactly supported form.
For $r>0$, the notation $B_V(0,r)$ refers throughout to the closed ball in $V$ centered at the origin with radius $r$.

\begin{theorem}
    \label{thm:sec1-compact-grassmannian-bl}
    Under the assumptions of Theorem~\ref{thm:sec1-forward-grassmannian-bl}, let $\alpha:G(n,k)\to(0,\infty)$ be continuous and define
	\begin{equation}
	    f_V(y):=\mathbf 1_{B_V(0,\alpha(V))}(y)F(V,y).
	    \label{eq:sec1-compact-grassmannian-input}
	\end{equation}
    Then
	\begin{equation}
	    \int_{\R^n}
	        \exp\left(\int_{G(n,k)}\log f_V(P_Vx)\,d\mu(V)\right)
	    \,dx
	    \le
	    \exp\left(
	        \int_{G(n,k)}
	            \log\left(\int_Vf_V(y)\,d\lambda_V(y)\right)
	        \,d\mu(V)
	    \right).
	    \label{eq:sec1-compact-grassmannian-bl}
	\end{equation}
\end{theorem}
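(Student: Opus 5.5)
Our plan is to deduce Theorem~\ref{thm:sec1-compact-grassmannian-bl} from Theorem~\ref{thm:sec1-forward-grassmannian-bl} by approximating the truncated input $f_V$ from above by strictly positive continuous functions that satisfy the hypotheses of that theorem. Then we pass to the limit, using monotone convergence on the right-hand side and the pointwise inequality on the left-hand side.

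First, reductions. If the right-hand side of \eqref{eq:sec1-compact-grassmannian-bl} is $+\infty$ there is nothing to prove. Otherwise it is finite, and in fact positive: $F$ is continuous and positive on the compact set $\{(V,y):|y|\le\alpha(V)\}$, so it is bounded below there by some $c>0$. Hence $\int_V f_V\,d\lambda_V\ge c\,\omega_k\alpha(V)^k$, which is bounded away from $0$ uniformly in $V$ because $\alpha$ is continuous and positive on the compact space $G(n,k)$. Also $\int_V f_V\le\int_V\mathcal F(|y|)\,d\lambda_V(y)=k\omega_k\int_0^\infty\mathcal F(r)r^{k-1}dr<\infty$ uniformly in $V$. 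So all the logarithms in question are bounded.

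Next, the approximation. For $j\in\N$, let $\phi_j:[0,\infty)\to(0,1]$ be continuous and nonincreasing, with $\phi_j(r)=1$ for $r\le 1$, $\phi_j(r)=e^{-j(r-1)-jr^2}$ roughly for $r\ge 1$, and $\phi_j\downarrow\mathbf 1_{[0,1]}$ pointwise as $j\to\infty$. The only requirement is that $\phi_j$ be strictly positive, continuous, decreasing in $j$, converge to $\mathbf 1_{[0,1]}$, and decay fast enough. Set
\[
F_j(V,y):=F(V,y)\,\phi_j\bigl(|y|/\alpha(V)\bigr).
\]
Then $F_j$ is continuous and strictly positive on $T_{n,k}$, and $F_j\le F\le\mathcal F(|y|)$, so the same majorant works and Theorem~\ref{thm:sec1-forward-grassmannian-bl} applies to each $F_j$. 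Moreover $F_j\ge f_V$ pointwise, so $\log F_j(V,P_Vx)\ge\log f_V(P_Vx)$ for all $V$ and $x$. Integrating this against $\mu$, exponentiating, and integrating in $x$ shows that the left-hand side for $f_V$ is at most the left-hand side for $F_j$. (Here $\log f_V=-\infty$ is allowed, and the monotonicity of the $\mu$-integral in $[-\infty,\infty)$ requires $\log F_j(V,P_Vx)$ to be bounded above in $V$ for fixed $x$, which holds since $F\le\|\mathcal F\|_\infty$.) Therefore the left side of \eqref{eq:sec1-compact-grassmannian-bl} is at most $\exp\bigl(\int\log\int_VF_j\,d\mu\bigr)$ for every $j$.

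Finally, the limit $j\to\infty$. For each $V$, $\int_VF_j\,d\lambda_V\downarrow\int_V f_V\,d\lambda_V$ by dominated convergence, since the integrands are dominated by $\mathcal F(|y|)$ and the sphere $\{|y|=\alpha(V)\}$ is $\lambda_V$-null. These quantities are bounded above and below uniformly in $V$ by the reduction step, so their logarithms are uniformly bounded. Since $\mu$ is finite, dominated convergence in $V$ gives convergence of $\int\log\int_VF_j\,d\mu$ to $\int\log\int_Vf_V\,d\mu$, which concludes the proof.

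The main obstacle I expect is the bookkeeping with $-\infty$ values and measurability: we must check that $V\mapsto\log f_V(P_Vx)$ is Borel, which follows because $f_V(P_Vx)$ is the product of the continuous function $F(V,P_Vx)$ and the indicator of a closed set. We must also make sure the monotone comparison of exponents is legitimate when the $\mu$-integrals are $-\infty$. Neither step needs anything beyond the uniform bounds above.
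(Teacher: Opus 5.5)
Your proposal is correct and follows essentially the paper's route: the paper also bounds the truncated input from above by strictly positive continuous admissible inputs, applies Theorem~\ref{thm:sec1-forward-grassmannian-bl}, and passes to the limit on the right by dominated convergence, using the same uniform upper bound coming from $\mathcal F$ and the same lower bound coming from $\min\alpha>0$ and the positive minimum of $F$ on a compact piece of $T_{n,k}$. The difference is only in the details. The paper uses a compactly supported linear cutoff $\psi_\delta$, adds $\eta e^{-|y|^2}$ to restore strict positivity, and takes a double limit ($\eta\downarrow0$, then $\delta\downarrow0$) with monotone convergence on the left. Your strictly positive multiplicative cutoff $\phi_j(|y|/\alpha(V))$ needs only one limit, and on the left you use only the pointwise bound $f_V\le F_j$. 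One small fix: your displayed formula for $\phi_j$ is not continuous at $r=1$, but a choice such as $\phi_j(r)=e^{-j(r-1)_+}$ has all the properties you list.
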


\begin{remark}
The preceding theorem can also be deduced from the abstract framework of Cordero-Erausquin and Maurey \cite[Theorem~5.1 and the discussion following it]{CorderoErausquinMaurey2017}.
We explain the relevant choices at a formal level.
We restrict here to the non-truncated case $\alpha\equiv\infty$.

Let $m(CM)$ and $n(CM)$ denote the dimensions of the source and target Euclidean spaces in their theorem.
The relevant conclusion of their theorem is
\begin{equation}
	\int_{\Omega_2}
	-\log\left(
		\int_{\R^{n(CM)}}
		e^{-g_t(y)}\,d y
	\right)\,d\mu_2(t)
	\le
	\int_{\Omega_1}
	-\log\left(
		\int_{\R^{m(CM)}}
		e^{-f_s(x)}\,d x
	\right)\,d\mu_1(s).
	\label{eq:sec1-CorderoErausquin-Maurey-equalrank}
\end{equation}
We take the source parameter space to be a singleton and the target parameter space to be the Grassmannian:
\[
	X_1=(\{\ast\},\delta_\ast),
	\qquad
	X_2=(G(n,k),\mu).
\]
We also take
\[
	m(CM)=n,
	\qquad
	n(CM)=k.
\]
For each $V\in G(n,k)$, choose a linear isometry
\[
	U_V:\R^k\longrightarrow\R^n
\]
whose range is $V$.
The family $V\mapsto U_V$ may be chosen to be Borel measurable.
Define
\[
	Q_V:=U_V^*:\R^n\longrightarrow\R^k.
\]
Then
\[
	Q_VQ_V^*
	=
	U_V^*U_V
	=
	I_k,
\]
and
\[
	Q_V^*Q_V
	=
	U_VU_V^*
	=
	P_V.
\]
In particular,
\[
	U_VQ_V=P_V.
\]
Define
\[
	A:\R^n\longrightarrow L^2(G(n,k),\mu;\R^k)
\]
by
\[
	(Ax)(V):=Q_Vx.
\]
The isotropy condition gives
\[
	\begin{aligned}
	\|Ax\|_{L^2(G(n,k),\mu;\R^k)}^2
	&=
	\int_{G(n,k)}|Q_Vx|^2\,d\mu(V)\\
	&=
	\int_{G(n,k)}|P_Vx|^2\,d\mu(V)\\
	&=
	\left\langle
		\left(
			\int_{G(n,k)}P_V\,d\mu(V)
		\right)x,x
	\right\rangle\\
	&=
	|x|^2.
	\end{aligned}
\]
Thus, $A$ is an isometry.
Moreover, taking traces in the isotropy condition gives
\[
	k\mu(G(n,k))
	=
	\int_{G(n,k)}\operatorname{tr}(P_V)\,d\mu(V)
	=
	n.
\]
Hence, the dimension-balance condition in the theorem of Cordero-Erausquin and Maurey becomes
\[
	m(CM)\mu_1(\Omega_1)
	=
	n
	=
	k\mu(G(n,k))
	=
	n(CM)\mu_2(\Omega_2).
\]
For $V\in G(n,k)$ and $z\in\R^k$, define
\[
	g_V(z):=-\log F(V,U_Vz).
\]
Define the function on the source space by
\[
	\begin{aligned}
	f_\ast(x)
	&:=
	\int_{G(n,k)}g_V(Q_Vx)\,d\mu(V)\\
	&=
	-\int_{G(n,k)}
	\log F(V,U_VQ_Vx)\,d\mu(V)\\
	&=
	-\int_{G(n,k)}
	\log F(V,P_Vx)\,d\mu(V).
	\end{aligned}
\]
The functional hypothesis in their theorem then holds with equality:
\[
	\int_{G(n,k)}
	g_V((Ax)(V))\,d\mu(V)
	=
	f_\ast(x),
	\qquad
	x\in\R^n.
\]
Since $U_V:\R^k\to V$ is an isometry, the change of variables $y=U_Vz$ gives
\[
	\begin{aligned}
	\int_{\R^k}e^{-g_V(z)}\,d z
	&=
	\int_{\R^k}F(V,U_Vz)\,d z\\
	&=
	\int_VF(V,y)\,d\lambda_V(y).
	\end{aligned}
\]
On the source side, we have
\[
	\begin{aligned}
	\int_{\R^n}e^{-f_\ast(x)}\,d x
	&=
	\int_{\R^n}
	\exp\left(
		\int_{G(n,k)}
		\log F(V,P_Vx)\,d\mu(V)
	\right)\,d x.
	\end{aligned}
\]
Substituting these identities into \eqref{eq:sec1-CorderoErausquin-Maurey-equalrank} gives
\[
	\begin{aligned}
	&-\int_{G(n,k)}
	\log\left(
		\int_VF(V,y)\,d\lambda_V(y)
	\right)\,d\mu(V)\\
	&\qquad\le
	-\log\left[
		\int_{\R^n}
		\exp\left(
			\int_{G(n,k)}
			\log F(V,P_Vx)\,d\mu(V)
		\right)\,d x
	\right].
	\end{aligned}
\]
Multiplying both sides by $-1$ and then taking exponentials gives
\[
	\int_{\R^n}
	\exp\left(
		\int_{G(n,k)}
		\log F(V,P_Vx)\,d\mu(V)
	\right)\,d x
	\le
	\exp\left(
		\int_{G(n,k)}
		\log\left(
			\int_VF(V,y)\,d\lambda_V(y)
		\right)\,d\mu(V)
	\right).
\]
This is precisely the Grassmannian Brascamp--Lieb inequality in the preceding theorem.

Thus, the Grassmannian inequality is contained in the general framework of Cordero--Erausquin and Maurey. 
We nevertheless state it explicitly in Grassmannian form and give two direct proofs adapted to its geometric structure. 
The first proof is based on approximation by discrete measures, while the second is based on a direct heat-flow argument. 
Both are different from the stochastic variational argument used in \cite{CorderoErausquinMaurey2017}.
\end{remark}

\begin{remark}
The Grassmannian Brascamp--Lieb inequality~\eqref{eq:sec1-compact-grassmannian-bl} (also,~\eqref{eq:sec1-forward-grassmannian-bl}) should cover Barthe's forward continuous Brascamp--Lieb inequality~\eqref{eq:sec1-forward-contin-rank-one-bl} in a reasonable sense.
The Grassmannian $G(n,1)$ is naturally identified with the real projective space
    \[
        S^{n-1}/\{\pm\}.
    \]
Let
    \[
        \pi:S^{n-1}\to G(n,1),
        \qquad
        \pi(u):=\operatorname{span}\{u\},
    \]
be the quotient map. 
For the finite isotropic Borel measure $\mu$ on $G(n,1)$, define a new measure $\widetilde{\mu}$ on $S^{n-1}$ by
    \[
        \int_{S^{n-1}}
            \varphi(u)
        \,d\widetilde{\mu}(u)
        :=
        \int_{G(n,1)}
            \frac{1}{2}
            \sum_{u\in V\cap S^{n-1}}
                \varphi(u)
        \,d\mu(V)
    \]
for every bounded Borel function $\varphi$ on $S^{n-1}$. 
Since $V\cap S^{n-1}$ consists of two antipodal unit vectors, this definition is independent of any choice of orientation.
We can see $\widetilde{\mu}$ is also an isotropic measure on $S^{n-1}$.
Indeed, for $V=\operatorname{span}\{u\}$, one has
    \[
        P_V
        =
        u\otimes u
        =
        (-u)\otimes(-u),
    \]
and hence
    \[
        \int_{S^{n-1}}
            u\otimes u
        \,d\widetilde{\mu}(u)
        =
        \int_{G(n,1)}
            P_V
        \,d\mu(V)
        =
        I_n.
    \]
For $u\in S^{n-1}$ and $V=\pi(u)$, define
    \[
        g_u(s)
        :=
        f_V(su),
        \qquad
        s\in\R.
    \]
Then
    \(
        g_{-u}(s)
        =
        g_u(-s),
    \)
and since
    \(
        P_Vx
        =
        \inner{x}{u}u,
    \)
we have
    \(
        f_V(P_Vx)
        =
        g_u\bigl(\inner{x}{u}\bigr).
    \)
Moreover, the map $s\mapsto su$ is an isometry from $\R$ onto $V$, so
    \[
        \int_V
            f_V(y)
        \,d\lambda_V(y)
        =
        \int_{\R}
            g_u(s)
        \,ds.
    \]
Consequently, when $k=1$, Theorem~\ref{thm:sec1-compact-grassmannian-bl} is precisely the spherical rank-one inequality for the even measure $\widetilde{\mu}$ and the antipodally compatible family
    \[
        g_{-u}(s)
        =
        g_u(-s).
    \]
Conversely, every rank-one family satisfying this compatibility condition descends to a family of functions on the fibres of $T_{n,1}$. 
Thus, the case $k=1$ of the Grassmannian theorem is the natural unoriented rank-one version of Barthe's continuous inequality.
\end{remark}

Approximate forms of the geometric Brascamp--Lieb inequality have also been studied.
In the discrete rank-one setting, Brazitikos~\cite{Brazitikos2017} replaced the exact decomposition of the identity by an approximate one:
    \[
        I_n \preceq \sum c_i u_i\otimes u_i \preceq \gamma I_n
    \]
for $\gamma > 1$.
He obtained a Brascamp--Lieb inequality with modified weights and an additional coefficient depending on the approximation parameter.
Brazitikos and Giannopoulos later proved a continuous version for approximately isotropic Borel measures on the sphere~\cite{BrazitikosGiannopoulos2022}:
    \[
        I_n \preceq \int_{S^{n-1}} u\otimes u \,d\mu(u) \preceq \gamma I_n
    \]
for $\gamma > 1$.
Related perturbation results are known for finite Brascamp--Lieb data.
Bennett, Bez, Flock, and Lee~\cite{BennettBezFlockLee2018} proved that the Brascamp--Lieb constant is locally bounded under small perturbations of the underlying linear maps.
Bennett, Bez, Cowling, and Flock~\cite{BennettBezCowlingFlock2017} subsequently proved that, for fixed exponents, this constant is continuous with respect to the linear maps.
These results suggest that an approximate version of the Grassmannian inequality above may also hold.
More precisely, if
	\[
		A_\mu
		:=
		\int_{G(n,k)}P_V\,d\mu(V)
		\qquad\text{satisfies}\qquad
		\lVert A_\mu-I_n\rVert_{\mathrm{op}}
		\leq
		\varepsilon,
	\]
one may expect a corresponding inequality with suitably adjusted Brascamp--Lieb coefficients and an additional factor \(C_{n,k}(\varepsilon)\), where \(C_{n,k}(\varepsilon)\to1\) as \(\varepsilon\downarrow0\).
We leave this question for future work.

\subsection{Isotropy-preserving measure approximation}
\label{subsec:sec1-measure-approximation}
Barthe gave a detailed construction of discrete approximations to isotropic measures on the sphere in his work on the continuous Brascamp--Lieb inequality, see~\cite{Barthe2004}.
His purpose was to derive applications in convex geometry, and he did not pursue this approximation argument to obtain the continuous inequality.
Cordero-Erausquin and Maurey~\cite{CorderoErausquinMaurey2017} also commented that the continuous Brascamp--Lieb inequality can be derived by an approximation argument.
The aim of Section~\ref{sec:sec2-approximation-proofs} is to provide a complete implementation of this method and to extend it to the higher-rank setting, thereby proving a continuous Brascamp--Lieb inequality on the Grassmannian.

We first explain the approximation proof briefly.
Let $\mu$ be any isotropic Borel measure on $S^{n-1}$.
For any $\varepsilon > 0$, a Borel partition of $S^{n-1}$ into sets $U_i^\varepsilon$ of small diameter and a choice of points $u_i^\varepsilon\in U_i^\varepsilon$ give the discrete measure
	\begin{equation}
	 \mu_\varepsilon:=\sum_i\mu(U_i^\varepsilon)\delta_{u_i^\varepsilon}.
	 \label{eq:sec1-rank-one-uncorrected-measure}
	\end{equation}
This measure converges weakly to $\mu$, but its isotropy matrix
	\begin{equation}
	    Q_\varepsilon:=\int_{S^{n-1}}u\otimes u\,d\mu_\varepsilon(u)
	    \label{eq:sec1-rank-one-isotropy-matrix}
	\end{equation}
need not equal $I_n$.
Nevertheless, $Q_\varepsilon\to I_n$ in operator norm, so $Q_\varepsilon^{-1/2}\to I_n$ in operator norm.
Normalizing each vector after applying $Q_\varepsilon^{-1/2}$ and modifying its weight gives
	\begin{equation}
	    T_\varepsilon(u)
	    :=
	    \frac{Q_\varepsilon^{-1/2}u}{|Q_\varepsilon^{-1/2}u|},
	    \qquad
	    u\in S^{n-1}.
	    \label{eq:sec1-rank-one-correction-map}
	\end{equation}
	\begin{equation}
	    \nu_\varepsilon
	    :=
	    \sum_i
	        \mu(U_i^\varepsilon)|Q_\varepsilon^{-1/2}u_i^\varepsilon|^2
	        \delta_{T_\varepsilon(u_i^\varepsilon)}.
	    \label{eq:sec1-rank-one-corrected-measure}
	\end{equation}
The measure $\nu_\varepsilon$ is exactly isotropic and still converges weakly to $\mu$.
The geometric rank-one inequality, weak convergence, and Fatou's lemma then yield Theorem~\ref{thm:sec1-forward-contin-rank-one-bl}.

While we were considering the Grassmannian inequality, a paper by Sch\"afer and Ullrich appeared on arXiv.
Their main result, a generalized version of Tchakaloff's theorem~\cite[Theorem~1.1]{SchaeferUllrich2025}, provides a direct construction of discrete isotropic approximations.
Given a sufficiently fine Borel partition
	\[
	    G(n,k)=\bigsqcup_i U_i^\varepsilon,
	\]
we apply their theorem on each cell to the restriction $\mu|_{U^\varepsilon_i}$, and to the finite-dimensional family consisting of the constant function and the matrix entries of the map $V\mapsto P_V$.
This gives points $V_{i,\ell}^\varepsilon\in U_i^\varepsilon$ and nonnegative coefficients $c_{i,\ell}^\varepsilon$ such that
	\begin{align}
	    \sum_\ell c_{i,\ell}^\varepsilon
	    &=\mu(U_i^\varepsilon),
	    \label{eq:sec1-local-mass-preservation}\\
	    \sum_\ell c_{i,\ell}^\varepsilon P_{V_{i,\ell}^\varepsilon}
	    &=\int_{U_i^\varepsilon}P_V\,d\mu(V).
	    \label{eq:sec1-local-projection-preservation}
	\end{align}
After summing over the cells, the measure
	\[
	    \nu_\varepsilon:=\sum_{i,\ell}c_{i,\ell}^\varepsilon\delta_{V_{i,\ell}^\varepsilon}
	\]
is discrete and isotropic.
The compactness of $G(n,k)$ (continuous functions are uniformly continuous) and the finiteness of $\mu$ give the weak convergence $\nu_\varepsilon\rightharpoonup\mu$.

We then apply the equal-rank geometric inequality \eqref{eq:sec1-equal-rank-geometric-bl} to $\nu_\varepsilon$.
For each fixed $x\in\R^n$, weak convergence applies to $V\mapsto\log F(V,P_Vx)$.
On the right-hand side, it applies to
	\[
	    V\longmapsto\log\left(\int_VF(V,y)\,d\lambda_V(y)\right).
	\]
The continuity of this fiber integral is not immediate because the domain of integration varies with $V$.
It is proved by transporting the integral through isometries from a fixed subspace to nearby subspaces that converge to the identity.
Fatou's lemma handles the nonnegative integrands on the left-hand side, while weak convergence gives the limit on the right-hand side.
This proves Theorem~\ref{thm:sec1-forward-grassmannian-bl}.
Finally, we multiply the imput functions in Theorem~\ref{thm:sec1-forward-grassmannian-bl} by continuous cutoff functions and add them by small, smooth, integrable function.
This gives a family of approximating functions satisfying the hypotheses of Theorem~\ref{thm:sec1-forward-grassmannian-bl} and converging pointwise to the input family in Theorem~\ref{thm:sec1-compact-grassmannian-bl}.
Passing to the limit then yields Theorem~\ref{thm:sec1-compact-grassmannian-bl}.

\subsection{Heat-semigroup preservation}
\label{subsec:sec1-heat-semigroup-preservation}

Our second proof is based on heat flow.
Heat-flow proofs of geometric Brascamp--Lieb inequalities appear in the work of Carlen, Lieb, and Loss~\cite{CLL2004} and Bennett, Carbery, Christ, and Tao~\cite{BCCT}.
Barthe and Huet established a pointwise heat-semigroup preservation principle for geometric Brascamp--Lieb data \cite{BartheHuet2009}.
We generalize this preservation mechanism to an isotropic measure on $G(n,k)$.
An almost identical heat-semigroup argument also proves Barthe's continuous Brascamp--Lieb inequality, Theorem~\ref{thm:sec1-forward-contin-rank-one-bl}.
We do not include this proof in our paper.

Throughout the heat-flow argument, $e^{t\Delta}$ denotes the heat semigroup operator on $\R^n$.
For a function $f_V$ on $V$, the ambient heat flow of $f_V\circ P_V$ is constant in the directions orthogonal to $V$ and satisfies
	\begin{equation}
	    [e^{t\Delta}(f_V\circ P_V)](x)
	    =
	    (4\pi t)^{-k/2}
	    \int_V\exp\left(-\frac{|P_Vx-y|^2}{4t}\right)f_V(y)\,d\lambda_V(y).
	    \label{eq:sec1-fiber-heat-semigroup}
	\end{equation}
Thus it agrees with the heat flow of $f_V$ on the fiber $V$, evaluated at $P_Vx$.
The relevant preservation theorem is the following.

\begin{theorem}
    \label{thm:sec1-heat-semigroup-preservation}
    Let $\mu$ be an isotropic Borel measure on $G(n,k)$, and let $h:\R^n\to[0,\infty)$ be bounded, Lebesgue measurable, and compactly supported.
    Let $(f_V)_{V\in G(n,k)}$ be either the strictly positive family $f_V(y)=F(V,y)$ from Theorem~\ref{thm:sec1-forward-grassmannian-bl} or the compactly supported family from Theorem~\ref{thm:sec1-compact-grassmannian-bl}.
    If
	\begin{equation}
	    h(x)
	    \le
	    \exp\left(\int_{G(n,k)}\log f_V(P_Vx)\,d\mu(V)\right)
	    \qquad\text{for every }x\in\R^n,
	    \label{eq:sec1-heat-semigroup-initial}
	\end{equation}
    then, for every $x\in\R^n$ and $t>0$,
	\begin{equation}
	    (e^{t\Delta}h)(x)
	    \le
	    \exp\left(
	        \int_{G(n,k)}
	            \log\left([e^{t\Delta}(f_V\circ P_V)](x)\right)
	        \,d\mu(V)
	    \right).
	    \label{eq:sec1-heat-semigroup-positive-time}
	\end{equation}
\end{theorem}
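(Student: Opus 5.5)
The plan is to follow the Barthe--Huet mechanism. Set
\[
u_V(t,x):=[e^{t\Delta}(f_V\circ P_V)](x),
\qquad
\Phi(t,x):=\int_{G(n,k)}\log u_V(t,x)\,d\mu(V),
\qquad
H(t,x):=e^{\Phi(t,x)}.
\]
The goal is to show that $H$ is a supersolution of the heat equation, $\partial_tH\ge\Delta H$, on $(0,\infty)\times\R^n$. The maximum principle, applied to $H-e^{t\Delta}h$, then gives \eqref{eq:sec1-heat-semigroup-positive-time}, using the initial comparison \eqref{eq:sec1-heat-semigroup-initial}.

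\textbf{Computation of $\partial_tH-\Delta H$.} By \eqref{eq:sec1-fiber-heat-semigroup}, each $u_V$ is strictly positive for $t>0$ (the input has positive $\lambda_V$-mass in both admissible families), smooth, solves the heat equation, and depends on $x$ only through $P_Vx$. Put $w_V:=\log u_V$. Then
\[
\partial_tw_V=\Delta w_V+|\nabla w_V|^2,
\qquad
\nabla w_V\in V.
\]
Differentiating under the integral is justified by the domination $f_V\le\mathcal F(|\cdot|)$, which gives $u_V\le C_t$ together with Gaussian bounds on derivatives divided by $u_V$; a lower bound on $u_V$ locally uniform in $(V,x)$ comes from continuity in $V$ as in the fiber-integral continuity argument. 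With $\Phi=\int w_V\,d\mu$ this yields
\[
\frac{\partial_tH-\Delta H}{H}
=\int_{G(n,k)}|\nabla w_V|^2\,d\mu(V)
-\left|\int_{G(n,k)}\nabla w_V\,d\mu(V)\right|^2 .
\]
The key step is that this quantity is nonnegative. Write $a_V:=\nabla w_V\in V$, so $a_V=P_Va_V$. For any unit vector $e$,
\[
\left\langle \int a_V\,d\mu,\,e\right\rangle
=\int\langle a_V,P_Ve\rangle\,d\mu
\le\left(\int|a_V|^2\,d\mu\right)^{1/2}\left(\int|P_Ve|^2\,d\mu\right)^{1/2}.
\]
By isotropy \eqref{eq:sec1-grassmannian-isotropy}, $\int|P_Ve|^2\,d\mu=\langle\int P_V\,d\mu\, e,e\rangle=1$. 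Hence $H$ is a supersolution.

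\textbf{Maximum principle and the main obstacle.} The hardest part is the comparison step, specifically behaviour at $t\to0$ and at spatial infinity.
\begin{itemize}
\item \emph{At infinity:} $h$ is bounded and compactly supported, so $e^{t\Delta}h$ has Gaussian decay. Meanwhile $H\ge0$, so $D:=e^{t\Delta}h-H$ satisfies $D^+\to0$ at infinity.
\item \emph{At $t\to0$:} rather than using pointwise initial data, work with a shifted time. For fixed $s>0$, apply the weak maximum principle for subsolutions of bounded growth (Phragmén--Lindelöf/Widder type) to $D$ on $[s,T]$. This requires controlling $e^{s\Delta}h-H(s,\cdot)$ as $s\to0$.
\item Since pointwise convergence is delicate for the truncated family, I would first try a duality formulation: show that $\int D(t,x)\,\varphi(x)\,dx\le0$ for nonnegative test functions. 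This uses
\[
\liminf_{s\to0}H(s,x)\ge\exp\Phi(0,x)\ge h(x)\quad\text{for a.e. }x.
\]
Here lower semicontinuity comes from Fatou applied to $\log u_V(s,x)$, which is bounded above uniformly. The upper bound $u_V\le\|\mathcal F\|_\infty$ justifies the reverse Fatou.
\item Positivity of the supersolution inequality then propagates.
\end{itemize}

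\textbf{Fallback.} If the a.e. initial comparison fails at boundary points of the balls $B_V(0,\alpha(V))$, I would approximate the compact family by the strictly positive one as in the proof of Theorem~\ref{thm:sec1-compact-grassmannian-bl}, and pass to the limit monotonically.
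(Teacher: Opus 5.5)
Your route differs from the paper's, and its overall architecture is sound. The paper applies the Barthe--Huet--Zhang lemma to the logarithmic comparison function $C=\int_{G(n,k)}\log u_V\,d\mu-\log e^{t\Delta}h$. It verifies $\partial_tC\ge0$ only at points where $\nabla C=0$ and $\Hess C\succeq0$. It needs the regularization $f_V+\delta$ to control $C$ at spatial infinity. It proves the compactly supported case first and deduces the positive case by truncating to $\mathbf 1_{B_V(0,R)}f_V$.

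You instead show that $e^{\Phi}$ is a supersolution everywhere, using the same isotropic Cauchy--Schwarz bound as Lemma~\ref{lem:sec3-isotropic-section-estimate}, and then compare linearly. The difference $D=e^{t\Delta}h-e^{\Phi}$ is a bounded subsolution with $D\le e^{t\Delta}h\to0$ at infinity uniformly in $t$, so no $\delta$-regularization is needed. As I read your duality step, the bound $D(t)\le e^{(t-s)\Delta}D(s)$, followed by Fatou in the space variable (legitimate because $e^{\Phi}\ge0$), reduces the initial layer to the almost-everywhere statement $\liminf_{s\downarrow0}e^{\Phi(s,x)}\ge h(x)$. That is better suited to a merely measurable $h$ than the pointwise condition (iii) of the lemma.

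The gap is in your justification of that a.e.\ statement, which is exactly the step you call hardest. You need $\liminf_{s\downarrow0}\int\log u_V(s,x)\,d\mu(V)\ge\int\log f_V(P_Vx)\,d\mu(V)$. This is a $\liminf$ of integrals, so Fatou requires a \emph{lower} bound on $\log u_V(s,x)$, uniform in small $s$. The upper bound $u_V\le\|\mathcal F\|_{L^\infty}$ only gives reverse Fatou, $\limsup_s\Phi(s,x)\le\int\limsup_s\log u_V(s,x)\,d\mu$, which points the wrong way. Nor is the ``a.e.'' automatic. At points $x$ with $\mu\{V:|P_Vx|=\alpha(V)\}>0$ and $\mu\{V:|P_Vx|>\alpha(V)\}=0$, one has $u_V(s,x)\to\tfrac12F(V,P_Vx)$ on the boundary set, and the inequality fails there.

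To repair it, treat the two families separately.
\begin{itemize}
\item For the positive family and $s\le1$, one has $u_V(s,x)\ge c_k\min\{F(V,y):y\in V,\ |y|\le|x|+1\}>0$ uniformly in $V$, where $c_k=\int_{B_{\R^k}(0,1)}(4\pi)^{-k/2}e^{-|z|^2/4}\,dz$. Since also $u_V(s,x)\to F(V,P_Vx)$, dominated convergence gives $\Phi(s,x)\to\Phi(0,x)$ for every $x$.
\item For the compact family, apply Tonelli to the closed set $\{(V,x):|P_Vx|=\alpha(V)\}$, whose slices $\{x:|P_Vx|=\alpha(V)\}$ are Lebesgue-null. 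This gives $\mu\{V:|P_Vx|=\alpha(V)\}=0$ for a.e.\ $x$. For such $x$, either $\Phi(0,x)=-\infty$, which is trivial, or $P_Vx$ lies in the open ball for $\mu$-a.e.\ $V$. In the latter case a similar uniform lower bound (using $\min\alpha>0$ and $s$ small) and pointwise convergence hold.
\end{itemize}

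Alternatively, your fallback is sound, and it is what the paper itself does in Section~\ref{sec:sec4-mixed-rank-bl}. Apply the positive case to $F_{\delta,\eta}\ge f_V$, then pass to the monotone limits in the heat flows and in the logarithmic averages.
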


We briefly describe the mechanism behind Theorem~\ref{thm:sec1-heat-semigroup-preservation}.
We first state the required maximum-principle lemma here; see \cite[Lemma~1]{BartheHuet2009} and \cite[Lemma~1]{Zhang2025}.

\begin{lemma}
    \label{lem:sec1-barthe-huet-zhang-maximum-principle}
    Let $E$ be a Euclidean space and let $C$ be continuous on $(0,\infty)\times E$, with $\partial_tC$, $\nabla_EC$, and $\Hess_EC$ continuous there.
    Assume that
	\begin{itemize}
	    \item[(i)] for every point $(t_*,x_*)\in(0,\infty)\times E$ such that
	\begin{equation}
	    \begin{cases}
	        C(t_*,x_*)\le 0,\\
	        \nabla_E C(t_*,x_*)=0,\\
	        \Hess_E C(t_*,x_*) \succeq 0
	    \end{cases}
	    \label{eq:sec1-minimum-point-conditions}
	\end{equation}
	    one has $\partial_t C(t_*,x_*)\ge 0$;
	    \item[(ii)] for every $T>0$,
	\begin{equation}
	    \liminf_{|x|_E\to \infty}
	        \left(
	            \inf_{t\in(0,T]}
	                C(t,x)
	        \right)
	        \ge
	        0;
	    \label{eq:sec1-spatial-boundary-condition}
	\end{equation}
	    \item[(iii)] for every $x_0\in E$,
	\begin{equation}
	    \liminf_{\substack{x\to x_0\\t\downarrow0}}
	        C(t,x)
	        \ge
	        0.
	    \label{eq:sec1-initial-boundary-condition}
	\end{equation}
	\end{itemize}
    Here all limit infima are allowed to take the value $\infty$.
    Then
	\[
	    C(t,x)
	    \ge
	    0,
	    \qquad
	    (t,x)\in(0,\infty)\times E.
	\]
\end{lemma}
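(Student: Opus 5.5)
We argue by contradiction, in the spirit of the classical weak parabolic maximum principle. Fix $T>0$ and $\varepsilon>0$, and consider the perturbed function
\[
C_\varepsilon(t,x):=C(t,x)+\varepsilon t+\varepsilon\,\frac{1}{1+|x|_E^2}\cdot 0 \;+\;\varepsilon,
\]
and, more simply, take $C_\varepsilon(t,x):=C(t,x)+\varepsilon(1+t)$. The goal is to show $C_\varepsilon>0$ on $(0,T]\times E$ for every $\varepsilon>0$. Letting $\varepsilon\downarrow0$ and then $T\to\infty$ gives $C\ge0$.

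Suppose instead that $C_\varepsilon(t,x)\le0$ at some point of $(0,T]\times E$. We first confine the problem to a compact set.

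\begin{itemize}
\item By (ii), there is $R>0$ with $C(t,x)\ge-\varepsilon/2$ for $|x|_E\ge R$ and $t\in(0,T]$. Hence $C_\varepsilon>0$ there.
\item By (iii) and compactness of the ball $\{|x|_E\le R\}$, there is $\delta>0$ with $C(t,x)\ge-\varepsilon/2$ for $|x|_E\le R$ and $t\in(0,\delta]$. This uses a finite cover by neighbourhoods on which the liminf bound yields $C>-\varepsilon/2$ for small $t$. Hence $C_\varepsilon>0$ there as well.
\end{itemize}

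The set $K:=[\delta,T]\times\{|x|_E\le R\}$ is therefore compact. Define
\[
t_*:=\inf\{t\in[\delta,T]:\ \exists x,\ C_\varepsilon(t,x)\le0\}.
\]
This is the first time at which $C_\varepsilon$ fails to be positive. By continuity of $C_\varepsilon$ and compactness of $K$, the infimum is attained at some point $(t_*,x_*)$ with $C_\varepsilon(t_*,x_*)=0$ and $|x_*|_E<R$; in fact $t_*>\delta$.

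Next we read off the local information at this point.

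\begin{itemize}
\item For all $t<t_*$ we have $C_\varepsilon(t,\cdot)>0$.
\item At time $t_*$ we have $C_\varepsilon(t_*,\cdot)\ge0$, and this minimum value $0$ is attained at the interior point $x_*$.
\end{itemize}

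Because $x_*$ is an interior spatial minimum, $\nabla_EC(t_*,x_*)=0$ and $\Hess_EC(t_*,x_*)\succeq0$ (the perturbation is independent of $x$). Moreover $C(t_*,x_*)=-\varepsilon(1+t_*)\le0$. So the conditions \eqref{eq:sec1-minimum-point-conditions} hold, and hypothesis (i) gives $\partial_tC(t_*,x_*)\ge0$, whence $\partial_tC_\varepsilon(t_*,x_*)\ge\varepsilon>0$.

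On the other hand, $C_\varepsilon(t,x_*)>0$ for $t<t_*$ and $C_\varepsilon(t_*,x_*)=0$. The backward difference quotient is therefore $\le0$, and so $\partial_tC_\varepsilon(t_*,x_*)\le0$, since $\partial_tC$ exists and is continuous. This contradicts $\partial_tC_\varepsilon(t_*,x_*)\ge\varepsilon$.

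The main delicacy is the localization step: turning the liminf hypotheses (ii) and (iii) into uniform positivity of $C_\varepsilon$ near $|x|=\infty$ and near $t=0$. Only after that does the first-failure time $t_*$ become attained at an interior point. The additive $\varepsilon(1+t)$ handles two things at once. It makes the liminfs strictly positive, and it converts the non-strict inequality in (i) into a strict one, which is exactly what produces the contradiction.
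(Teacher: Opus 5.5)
Your argument is correct and complete. The perturbation $C+\varepsilon(1+t)$, the confinement of $\{C_\varepsilon\le 0\}$ to a compact slab $[\delta,T]\times\{|x|_E\le R\}$ via (ii) and (iii), the first-failure time $t_*$, and the contradiction between the backward difference quotient and hypothesis (i) together form the standard proof of this weak parabolic maximum principle. The paper gives no proof of its own and simply imports the lemma from \cite[Lemma~1]{BartheHuet2009} and \cite[Lemma~1]{Zhang2025}.

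Only cosmetic fixes are needed:
\begin{itemize}
\item Delete the stray first definition of $C_\varepsilon$ (the one containing a term multiplied by $0$).
\item Choose $\delta$ strictly below the minimum of the finitely many time thresholds from the cover, and below $T$, so that the bound holds on the closed interval $(0,\delta]$.
\item The condition $|x_*|_E<R$ is not needed. The point $x_*$ minimizes $C_\varepsilon(t_*,\cdot)$ over all of the open space $E$, so $\nabla_E C(t_*,x_*)=0$ and $\Hess_E C(t_*,x_*)\succeq 0$ follow automatically.
\end{itemize}
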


For positive time, set
	\[
	    H(t,x):=\log[e^{t\Delta}h](x),
	    \qquad
	    F_V(t,x):=\log[e^{t\Delta}(f_V\circ P_V)](x),
	\]
and consider the comparison function
	\[
	    C(t,x):=\int_{G(n,k)}F_V(t,x)\,d\mu(V)-H(t,x).
	\]
    
Assumption~\eqref{eq:sec1-heat-semigroup-initial} is equivalent to the third condition of the preceding lemma and our goal~\eqref{eq:sec1-heat-semigroup-positive-time} is equivalent to the conclusion of the preceding lemma.
Therefore, the main task of Section~\ref{sec:sec3-heat-semigroup-proof} is to prove that the comparison function $C(t,x)$ verifies all hypotheses of that lemma.

The most interesting step in the proof is to verify that the comparison function \(C\) satisfies the first hypothesis of the preceding lemma.
Using the evolution identity---briefly speaking, if $e^U$ satisfies the heat equation, then
    \[
    	\partial_t U
    	=
    	\Delta U+|\nabla U|^2,
    \]
we obtain
    \[
    	\partial_tC
        =
        \left(
            \int_{G(n,k)}
                \Delta_{\R^n}F_V
            \,d\mu(V)
            -
            \Delta_{\R^n}H
        \right)
        +
        \left(
            \int_{G(n,k)}
                |\nabla_{\R^n}F_V|^2
            \,d\mu(V)
            -
            |\nabla_{\R^n}H|^2
        \right).
    \]
It is therefore enough to show that both the second-order and first-order derivative terms on the right-hand side are nonnegative.
This gives \(\partial_t C(t,x)\geq 0\) for any $t > 0$ and $x \in \R^n$.
The nonnegativity of the second-order term follows from the positive semidefiniteness of the Hessian.
For the first-order term, we use the condition \(\nabla C=0\), together with a Jensen-type inequality that will be proved later.
The complete argument, the verification of the second and third hypotheses, and the technical issues (for example, interchanging differentiation and integration) are given in Section~\ref{sec:sec3-heat-semigroup-proof}.

\subsection{Averaged Loomis--Whitney inequalities and quermassintegrals}

We explain an special case and an easy application of our Grassmannian inequality (although this application leads to a result weaker than the sharp estimate by Milman and Yehudayoff~\cite{MilmanYehudayoff2023}).
A special case of our continuous Grassmannian Brascamp--Lieb inequality is the logarithmic averaged Loomis--Whitney inequality appeared in \cite[Remark~8.3]{BennettTao2024} by Bennett and Tao.

Let $\nu_{n,k}$ denote the rotation-invariant probability measure on $G(n,k)$.
By rotation invariance, the operator
    \[
        \int_{G(n,k)}P_V\,d\nu_{n,k}(V)
    \]
is a scalar multiple of $I_n$.
Taking traces shows that
    \[
        \int_{G(n,k)}P_V\,d\nu_{n,k}(V)
        =
        \frac{k}{n}I_n.
    \]
Consequently, the measure
    \[
        d\mu(V):=\frac{n}{k}\,d\nu_{n,k}(V)
    \]
is isotropic.

Let $K\subset\R^n$ be a compact set with positive Lebesgue measure.
Choose $R>0$ such that $K\subset B_{\R^n}(0,R)$.
For every $m\ge1$, define
    \[
        F_m(V,y)
        :=
        \exp\left(-m\operatorname{dist}(y,P_VK)^2\right)
    \]
and
    \[
        f_{V,m}(y)
        :=
        \mathbf 1_{B_V(0,R)}(y)F_m(V,y).
    \]
We observe that $F_m > 0$ on $T_{n,k}$.
Since
    \[
        \operatorname{dist}(y,P_VK)
        =
        \min_{z\in K}|y-P_Vz|,
    \]
the function $F_m$ is continuous on $T_{n,k}$.
Indeed, to show the continuity, it suffices to show that $(V,y)\mapsto \min_{z\in K}|y - P_Vz|$ is continuous on $T_{n,k}$.
Suppose $(W,w)\to (V,y)$
    \begin{align*}
        \left|
            \min_{z\in K}
                |w - P_Wz|
            -
            \min_{z\in K}
                |y - P_Vy|
        \right|
        &\le
        \max_{z\in K}
            \big|
                |w - P_Wz| - |y - P_Vz|
            \big|\\
        &\le
        |w - y|
        +
        R\,\|P_V - P_W\|_{\operatorname{op}}
        \to
        0.
    \end{align*}    
Moreover,
    \[
        \operatorname{dist}(y,P_VK)
        \ge
        (|y|-R)_+
        =:
        \max \{ |y| - R ,0 \},
\]
and hence
    \[
        F_m(V,y)
        \le
        \exp\left(-((|y|-R)_+)^2\right).
    \]
We choose the radical dominating function $\mathcal{F}(|y|) = \exp(-[(|y| - R)_+]^2)$.
Thus, $F_m$ satisfies the radial domination assumption in
Theorem~\ref{thm:sec1-forward-grassmannian-bl}, and $f_{V,m}$ is an admissible input for
Theorem~\ref{thm:sec1-compact-grassmannian-bl}, with $\alpha(V)=R$.

For every $x\in K$ and $V\in G(n,k)$, we have
    \[
        P_Vx\in P_VK\subset B_V(0,R),
    \]
and therefore
    \[
        f_{V,m}(P_Vx)=1.
    \]
It follows 
    \[
        \mathbf{1}_K(x)
        \le
        \exp\left(
                \int_{G(n,k)}
                    \log f_{V,m}(P_Vx)
                \,d\mu(V)
            \right),
    \]
and hence
    \[
        |K|
        \le
        \int_{\R^n}
            \exp\left(
                \int_{G(n,k)}
                    \log f_{V,m}(P_Vx)
                \,d\mu(V)
            \right)
        \,dx.
    \]
Applying Theorem~\ref{thm:sec1-compact-grassmannian-bl}, we obtain
    \[
        |K|
        \le
        \exp\left(
            \frac{n}{k}
            \int_{G(n,k)}
                \log\left(
                    \int_{B_V(0,R)}
                        \exp\left(
                            -m\operatorname{dist}(y,P_VK)^2
                        \right)
                    \,d\lambda_V(y)
                \right)
            \,d\nu_{n,k}(V)
        \right).
    \]
For every $V\in G(n,k)$, dominated convergence gives
    \[
        \int_{B_V(0,R)}
            \exp\left(
                -m\operatorname{dist}(y,P_VK)^2
            \right)
        \,d\lambda_V(y)
        \longrightarrow
        |P_VK|
    \]
as $m\to\infty$.
Furthermore, Fubini's theorem gives the uniform lower bound
    \[
        |K|
        \le
        \omega_{n-k}R^{n-k}|P_VK|,
    \]
where $\omega_j:=|B_{\R^j}(0,1)|$.
Together with the upper bound $|P_VK|\le\omega_kR^k$, this allows us to apply dominated convergence to the logarithms.
We conclude that
    \begin{equation}
        |K|^{k/n}
        \le
        \exp\left(
            \int_{G(n,k)}
                \log|P_VK|
            \,d\nu_{n,k}(V)
        \right).
        \label{eq:intro-averaged-grassmannian-lw}
    \end{equation}
The corresponding statement for Lebesgue measurable sets follows from inner regularity.
Indeed, if $\Omega\subset\R^n$ is measurable and has positive finite measure, we may choose compact sets $K_j\subset\Omega$ such that $|K_j|\to|\Omega|$.
Since $P_VK_j\subset P_V\Omega$, inequality
\eqref{eq:intro-averaged-grassmannian-lw} implies
    \[
        |\Omega|^{k/n}
        \le
        \exp\left(
            \int_{G(n,k)}
                \log|P_V\Omega|
            \,d\nu_{n,k}(V)
        \right).
    \]
In particular, let $k=n-1$, and let $\sigma$ denote the rotation-invariant probability measure on $S^{n-1}$.
The map
    \[
        S^{n-1}\ni\omega\longmapsto\omega^\perp\in G(n,n-1)
    \]
pushes $\sigma$ forward to $\nu_{n,n-1}$.
Hence, taking logarithms in
    \eqref{eq:intro-averaged-grassmannian-lw}, we obtain
    \begin{equation}
        \frac{n-1}{n}\log|\Omega|
        \le
        \int_{S^{n-1}}
            \log|P_{\omega^\perp}\Omega|
        \,d\sigma(\omega).
        \label{eq:intro-logarithmic-averaged-lw}
    \end{equation}
This is precisely the logarithmic averaged Loomis--Whitney inequality appearing in
\cite[Remark~8.3]{BennettTao2024}.
By Jensen's inequality, it further implies
    \[
        |\Omega|^{(n-1)/n}
        \le
        \int_{S^{n-1}}
            |P_{\omega^\perp}\Omega|
        \,d\sigma(\omega),
    \]
which is the averaged Loomis--Whitney inequality stated as equation~(8.10) in that paper.

Inequality~\eqref{eq:intro-averaged-grassmannian-lw} is also related to quermassintegrals.
For a convex body $K\subset\R^n$, the $k$-th quermassintegral can be defined by Kubota's formula
    \[
        W_k(K)
        :=
        \frac{\omega_n}{\omega_k}
        \int_{G(n,k)}
            |P_VK|
        \,d\nu_{n,k}(V).
    \]
Following Milman and Yehudayoff
\cite[Definition~7.1]{MilmanYehudayoff2023}, one may more generally define the $L^p$-moment quermassintegrals by
    \[
        Q_{k,p}(K)
        :=
        \frac{\omega_n}{\omega_k}
        \left(
            \int_{G(n,k)}
                |P_VK|^p
            \,d\nu_{n,k}(V)
        \right)^{1/p},
        \qquad p\ne0.
    \]
The case $p=0$ is defined by taking limit as
    \[
        Q_{k,0}(K)
        :=
        \frac{\omega_n}{\omega_k}
        \exp\left(
            \int_{G(n,k)}
                \log|P_VK|
            \,d\nu_{n,k}(V)
        \right).
    \]
It follows immediately from
\eqref{eq:intro-averaged-grassmannian-lw} that
    \[
        Q_{k,0}(K)
        \ge
        \frac{\omega_n}{\omega_k}|K|^{k/n}.
    \]
Since the $L^p$-means are nondecreasing in $p$, we further obtain
    \begin{equation}
        Q_{k,p}(K)
        \ge
        Q_{k,0}(K)
        \ge
        \frac{\omega_n}{\omega_k}|K|^{k/n},
        \qquad p\ge0.
        \label{eq:intro-quermassintegral-lower-bound}
    \end{equation}
In particular,
    \[
        W_k(K)
        \ge
        \frac{\omega_n}{\omega_k}|K|^{k/n}.
    \]
The estimate in
\eqref{eq:intro-quermassintegral-lower-bound} is weaker than the sharp inequality of Milman and Yehudayoff
\cite[Theorems~1.3 and~7.2]{MilmanYehudayoff2023}.
If $B_K$ denotes the centered Euclidean ball satisfying $|B_K|=|K|$, their result states that
    \[
        Q_{k,p}(K)
        \ge
        Q_{k,p}(B_K)
        =
        \omega_n^{1-k/n}|K|^{k/n},
        \qquad p\ge-n.
    \]
For $1\le k<n$, one has
    \[
        \frac{\omega_n}{\omega_k}
        <
        \omega_n^{1-k/n},
    \]
so the constant obtained from our Grassmannian Brascamp--Lieb inequality is strictly smaller than the sharp constant.
For $p>-n$, equality in the sharp inequality holds if and only if $K$ is a Euclidean ball, while for $p=-n$ equality holds if and only if $K$ is an ellipsoid.

\subsection{A mixed-rank continuous Brascamp--Lieb inequality}
The equal-rank assumption in Theorem~\ref{thm:sec1-forward-grassmannian-bl} and~\ref{thm:sec1-compact-grassmannian-bl} is not essential.
Set
\begin{equation}
    \label{eqn:intro section-the mixed Grassmannian and tautological bundle}
    \mathcal{G}_n
    :=
    \bigsqcup_{k=1}^{n}
        G(n,k),
    \qquad
    \mathcal T_n
    :=
    \bigsqcup_{k=1}^{n}
        T_{n,k}.
\end{equation}
Thus, \(\mathcal{G}_n\) is the finite disjoint union of all Grassmannians in \(\R^n\), and the fibre of \(\mathcal T_n\) over \(V\in\mathcal{G}_n\) is the Euclidean space \(V\).
Let \(\mu\) be a finite Borel measure on \(\mathcal{G}_n\) satisfying
\begin{equation}
    \label{eqn:intro section-the mixed-rank isotropy condition}
    \int_{\mathcal{G}_n}
        P_V
    \,d\mu(V)
    =
    I_n.
\end{equation}
Taking traces in \eqref{eqn:intro section-the mixed-rank isotropy condition} gives the dimension balance
\begin{equation}
    \label{eqn:intro section-the mixed-rank dimension balance}
    \int_{\mathcal{G}_n}
        \dim(V)
    \,d\mu(V)
    =
    n.
\end{equation}
Let \(F:\mathcal T_n\to(0,\infty)\) be continuous, and suppose that, for every \(1\le k\le n\), there exists \(\mathcal F_k\in L^\infty([0,\infty))\) such that
\begin{equation}
    \label{eqn:intro section-the mixed-rank upper bounds}
    F(V,y)
    \le 
    \mathcal F_k(|y|)
    \quad\text{for \(V\in G(n,k)\) and \(y\in V\),}
    \qquad
    \int_0^\infty 
        \mathcal F_k(r)r^{k-1}
    \,d r
    <
    \infty.
\end{equation}
Then the corresponding mixed-rank continuous Brascamp--Lieb inequality is
\begin{equation}
    \label{eqn:intro section-the mixed-rank continuous Brascamp--Lieb inequality}
    \begin{aligned}
        &\int_{\R^n}
        \exp\left(
            \int_{\mathcal{G}_n}
                \log F(V,P_Vx)
            \,d\mu(V)
        \right)
        \,dx \\
        \le\,&
        \exp\left(
            \int_{\mathcal{G}_n}
            \log\left(
                \int_V F(V,y)
            \,d\lambda_V(y)
            \right)\,d\mu(V)
        \right).
    \end{aligned}
\end{equation}

The measure approximation proof of Theorem~\ref{thm:sec1-forward-grassmannian-bl} and~\ref{thm:sec1-compact-grassmannian-bl} extends directly to this setting. 
The heat-semigroup proof also extends without any essential change.
In the later sections, we will focus on the equal-rank case.
However, we will give a sketch of the approximate arguments and a sketch of the heat-flow proof to this mixed-rank inequality; see Section~\ref{sec:sec4-mixed-rank-bl}.

\begin{remark}
	\label{rem:sec1-cm-mixed-rank}
	The mixed-rank inequality also follows from an extension of Theorem~5.1 at the end of Section~5 of \cite{CorderoErausquinMaurey2017}.
	We consider only the non-truncated case $\alpha\equiv\infty$.

	In their work, the source function $f_s$ is defined on a Euclidean space $E_s$, and the target function $g_t$ is defined on a Euclidean space $F_t$.
	All these spaces are subspaces of a common Euclidean space $\R^\ell$.
	Their dimensions may depend on $s$ and $t$.
	The integral conclusion takes the form
	\begin{equation}
		\begin{aligned}
			\int_{\Omega_2}
			-\log\left(
				\int_{F_t}e^{-g_t(y)}\,d\lambda_{F_t}(y)
			\right)\,d\mu_2(t)
			\le
			\int_{\Omega_1}
			-\log\left(
				\int_{E_s}e^{-f_s(x)}\,d\lambda_{E_s}(x)
			\right)\,d\mu_1(s).
		\end{aligned}
		\label{eq:sec1-cm-variable-fibre-conclusion}
	\end{equation}
	Here, $\lambda_{E_s}$ and $\lambda_{F_t}$ denote the induced Lebesgue measures on the corresponding fibres.
	Let $\mu$ be the isotropic Borel measure in the above mixed-rank statement, so that
	\[
		\int_{\mathcal G_n}P_V\,d\mu(V)=I_n.
	\]
	We take $\ell=n$ and choose
	\[
		X_1=(\{\ast\},\delta_\ast),
		\qquad
		X_2=(\mathcal G_n,\mu),
	\]
	with the Borel $\sigma$-algebras.
	The source fibre and the target fibres are
	\[
		E_\ast=\R^n,
		\qquad
		F_V=V
		\quad\text{for }V\in\mathcal G_n.
	\]
	Thus, the target dimension is $\dim F_V=\dim V$ and may vary with $V$.
    The source section space is identified with $\R^n$.
	Define
    \[
    	A:\R^n\longrightarrow L^2(\mathcal G_n,\mu;\R^n),
    	\qquad
    	(Ax)(V):=P_Vx.
    \]
    The isotropy condition gives
    \[
    	\begin{aligned}
    	\|Ax\|_{L^2(\mathcal G_n,\mu;\R^n)}^2
    	&=
    	\int_{\mathcal G_n}|P_Vx|^2\,d\mu(V)\\
    	&=
    	\left\langle
    		\left(
    			\int_{\mathcal G_n}P_V\,d\mu(V)
    		\right)x,x
    	\right\rangle\\
    	&=
    	|x|^2.
    	\end{aligned}
    \]
    Thus, $A$ is an isometry.
    In the notation of their variable-fibre setting, the projections onto the source and target fibres are
	\[
		P(\ast)=I_n,
		\qquad
		Q(V)=P_V.
	\]
	The required compatibility condition therefore holds:
	\[
		A(s\mapsto P(s)w)(V)
		=
		(Aw)(V)
		=
		P_Vw
		=
		Q(V)w,
		\qquad
		w\in\R^n.
	\]
	Taking traces in the isotropy condition gives
	\[
		\int_{\mathcal G_n}\dim V\,d\mu(V)
		=
		\int_{\mathcal G_n}\operatorname{tr}(P_V)\,d\mu(V)
		=
		n.
	\]
	This is precisely the required dimension balance:
	\[
		\int_{\Omega_1}\dim E_s\,d\mu_1(s)
		=
		n
		=
		\int_{\Omega_2}\dim F_t\,d\mu_2(t).
	\]
    For the input in the mixed-rank inequality, set
	\[
		g_V(y):=-\log F(V,y),
		\qquad
		y\in V.
	\]
	Define the source function by
	\[
		\begin{aligned}
			f_\ast(x)
			:=
			\int_{\mathcal G_n}g_V(P_Vx)\,d\mu(V)
			=
			-\int_{\mathcal G_n}\log F(V,P_Vx)\,d\mu(V),
			\qquad
			x\in\R^n.
		\end{aligned}
	\]
	The functional hypothesis holds with equality:
	\[
		\int_{\mathcal G_n}g_V((Ax)(V))\,d\mu(V)
		=
		f_\ast(x)
		=
		\int_{\Omega_1}f_s(x)\,d\mu_1(s),
		\qquad
		x\in\R^n.
	\]
    The target fibre integrals satisfy
	\[
		\int_{F_V}e^{-g_V(y)}\,d\lambda_{F_V}(y)
		=
		\int_VF(V,y)\,d\lambda_V(y).
	\]
	On the source side,
	\[
		\int_{E_\ast}e^{-f_\ast(x)}\,d\lambda_{E_\ast}(x)
		=
		\int_{\R^n}
		\exp\left(
			\int_{\mathcal G_n}\log F(V,P_Vx)\,d\mu(V)
		\right)\,d x.
	\]
	Substituting these identities into \eqref{eq:sec1-cm-variable-fibre-conclusion} gives
	\[
		\begin{aligned}
			-\int_{\mathcal G_n}
			\log\left(
				\int_VF(V,y)\,d\lambda_V(y)
			\right)\,d\mu(V)
			\le
			-\log\left[
				\int_{\R^n}
				\exp\left(
					\int_{\mathcal G_n}\log F(V,P_Vx)\,d\mu(V)
				\right)\,d x
			\right].
		\end{aligned}
	\]
	Multiplying both sides by $-1$ and taking exponentials gives
	\[
		\begin{aligned}
			\int_{\R^n}
			\exp\left(
				\int_{\mathcal G_n}\log F(V,P_Vx)\,d\mu(V)
			\right)\,d x
			\le
			\exp\left(
				\int_{\mathcal G_n}
				\log\left(
					\int_VF(V,y)\,d\lambda_V(y)
				\right)\,d\mu(V)
			\right).
		\end{aligned}
	\]
	This is the mixed-rank continuous Grassmannian Brascamp--Lieb inequality.
\end{remark}

\subsection{Organization of the paper}
\label{subsec:sec1-paper-organization}

The remainder of the paper is organized as follows.
In Section~\ref{sec:sec2-approximation-proofs}, we prove Theorem~\ref{thm:sec1-forward-contin-rank-one-bl}, Theorem~\ref{thm:sec1-forward-grassmannian-bl}, and Theorem~\ref{thm:sec1-compact-grassmannian-bl} by measure approximation.
In Section~\ref{sec:sec3-heat-semigroup-proof}, we establish Theorem~\ref{thm:sec1-heat-semigroup-preservation} and derive the Grassmannian inequalities by heat flow.
In Section~\ref{sec:sec4-mixed-rank-bl}, we state a mixed-rank continuous Brascamp--Lieb inequality and give sketches of proof by measure approximation and heat-semigroup preservation.

\section{Approximation Proofs for the Continuous Brascamp--Lieb Inequalities}
\label{sec:sec2-approximation-proofs}

The approximation method used in this section has two steps.
We first approximate a continuous isotropic measure by discrete measures that satisfy the isotropic condition.
We then apply the corresponding geometric Brascamp--Lieb inequality and pass to the weak limit.
On the sphere, the construction of approximate discrete isotropic measures is standard.
For higher-dimensional subspaces we use a Tchakaloff-type theorem~\cite{Tchakaloff1957,SchaeferUllrich2025}.

We present the main approximation arguments before proving all of the continuity and measurability statements needed in the Grassmannian case.
This order makes the passage from the discrete inequality to the continuous one visible without interrupting it with technical details.

\subsection{The Rank-One Case}
\label{subsec:sec2-rank-one-case}

We begin with a standard discretization fact for finite measures on compact metric spaces.

\begin{lemma}
    \label{lem:sec2-compact-metric-discretization}
    Let $(X,d)$ be a compact metric space and let $\sigma$ be a finite positive Borel measure on $X$.
    For every $\varepsilon>0$, there are points $x_1^\varepsilon,\ldots,x_{m(\varepsilon)}^\varepsilon$ and a (disjoint) Borel partition $(U_i^\varepsilon)_{i=1}^{m(\varepsilon)}$ of $X$ such that
	\[
	    x_i^\varepsilon
	    \in
	    U_i^\varepsilon
	    \subset
	    \overline B_d(x_i^\varepsilon,\varepsilon).
	\]
    Here,
	\[
	    \overline B_d(x_i^\varepsilon,\varepsilon)
	    :=
	    \{x\in X:d(x,x_i^\varepsilon)\le\varepsilon\}
	\]
    denotes the closed metric ball in $(X,d)$.
    Moreover, the discrete measures
	\begin{equation}
	    \sigma_\varepsilon
	    :=
	    \sum_{i=1}^{m(\varepsilon)}
	        \sigma(U_i^\varepsilon)\delta_{x_i^\varepsilon}
	    \label{eq:sec2-compact-discrete-measure}
	\end{equation}
    converge weakly to $\sigma$ as $\varepsilon\downarrow0$.
\end{lemma}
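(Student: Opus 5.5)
The plan has two parts: build the partition from a finite $\varepsilon$-net, then prove weak convergence through uniform continuity.

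\emph{Construction of the partition.}
By compactness, the open cover $\{B_d(x,\varepsilon):x\in X\}$ has a finite subcover. So there are points $y_1,\dots,y_N$ with $X=\bigcup_j \overline B_d(y_j,\varepsilon)$. Make the cover disjoint in the usual way:
\[
W_1:=\overline B_d(y_1,\varepsilon),
\qquad
W_j:=\overline B_d(y_j,\varepsilon)\setminus\bigcup_{l<j}\overline B_d(y_l,\varepsilon).
\]
Each $W_j$ is Borel, being a difference of closed sets. The $W_j$ are pairwise disjoint and cover $X$, and $W_j\subset\overline B_d(y_j,\varepsilon)$.

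There is one subtlety. The statement asks that the marked point lie in its cell, and $y_j$ may have been removed by the disjointification. I handle this as follows. Discard the empty $W_j$, relabel the remaining ones as $U_1^\varepsilon,\dots,U_{m(\varepsilon)}^\varepsilon$, and pick an arbitrary $x_i^\varepsilon\in U_i^\varepsilon$. Then $U_i^\varepsilon\subset\overline B_d(y_j,\varepsilon)$ only gives $d(x,x_i^\varepsilon)\le 2\varepsilon$ for every $x\in U_i^\varepsilon$. To get the stated radius, run the whole construction with $\varepsilon/2$ in place of $\varepsilon$, which yields $U_i^\varepsilon\subset\overline B_d(x_i^\varepsilon,\varepsilon)$. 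This is the only point needing care; everything else is routine.

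\emph{Weak convergence.}
Fix $\varphi\in C(X)$. Since $X$ is compact, $\varphi$ is uniformly continuous; let $\omega_\varphi$ be its modulus of continuity, so $\omega_\varphi(\varepsilon)\to0$ as $\varepsilon\downarrow0$. Because the $U_i^\varepsilon$ partition $X$ and $d(x,x_i^\varepsilon)\le\varepsilon$ on $U_i^\varepsilon$,
\[
\left|\int_X\varphi\,d\sigma-\int_X\varphi\,d\sigma_\varepsilon\right|
=\left|\sum_i\int_{U_i^\varepsilon}\bigl(\varphi(x)-\varphi(x_i^\varepsilon)\bigr)\,d\sigma(x)\right|
\le\omega_\varphi(\varepsilon)\,\sigma(X)\to0.
\]
Since $\sigma(X)<\infty$, this gives $\sigma_\varepsilon\rightharpoonup\sigma$ as $\varepsilon\downarrow0$.

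Finally, taking $\varphi\equiv1$ in the identity above shows $\sigma_\varepsilon(X)=\sigma(X)$ exactly. This total-mass identity is useful later in the paper.
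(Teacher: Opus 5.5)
Your proof is correct and follows the paper's strategy: a finite net, disjointification in index order, and the modulus-of-continuity estimate $\sigma(X)\,\omega_\varphi(\varepsilon)$ for weak convergence. The only difference is in how the marked points are placed. The paper takes a maximal $\varepsilon$-separated set and nearest-center cells, so each center automatically lies in its own cell at radius $\varepsilon$. You instead use an arbitrary finite subcover, pick representatives inside the nonempty cells, and absorb the resulting factor $2$ by running the construction at $\varepsilon/2$.
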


\begin{proof}
    Compactness implies total boundedness, so every $\varepsilon$-separated subset of $X$ is finite.
    We may therefore choose a finite maximal $\varepsilon$-separated set $\{x_i^\varepsilon\}_{i=1}^{m(\varepsilon)}$.
    If some $x\in X$ had distance greater than $\varepsilon$ from every $x_i^\varepsilon$, then adjoining $x$ would produce a larger $\varepsilon$-separated set.
    Maximality consequently gives
	\[
	    X
	    =
	    \bigcup_{i=1}^{m(\varepsilon)}
	        \overline B_d(x_i^\varepsilon,\varepsilon).
	\]

    To construct the partition explicitly, define
	\[
	    E_i^\varepsilon
	    :=
	    \left\{
	        x\in X:
	        d(x,x_i^\varepsilon)
	        =
	        \min_{1\le j\le m(\varepsilon)}d(x,x_j^\varepsilon)
	    \right\}
	\]
    and resolve ties in the order of the indices by setting
	\[
	    U_1^\varepsilon:=E_1^\varepsilon,
	    \qquad
	    U_i^\varepsilon
	    :=
	    E_i^\varepsilon\setminus\bigcup_{j<i}E_j^\varepsilon
	    \quad
	    \text{for }i\ge2.
	\]
    Every $E_i^\varepsilon$ is Borel because the distance functions are continuous, so every $U_i^\varepsilon$ is Borel.
    These sets are disjoint and cover $X$.
    The separation of the centers implies $x_i^\varepsilon\in U_i^\varepsilon$, and the covering property implies $U_i^\varepsilon\subset\overline B_d(x_i^\varepsilon,\varepsilon)$.

    For $g\in C(X)$, set
	\[
	    \omega_g(t)
	    :=
	    \sup
	        \bigl\{|g(x)-g(y)|:d(x,y)\le t\bigr\}.
	\]
    Uniform continuity of $g$ gives $\omega_g(t)\to0$ as $t\downarrow0$.
    By the definition of $\sigma_\varepsilon$ and the disjointness of the partition,
	\begin{align*}
	    \int_Xg\,d\sigma-
	    \int_Xg\,d\sigma_\varepsilon
	    &=
	    \sum_{i=1}^{m(\varepsilon)}
	        \int_{U_i^\varepsilon}
	            \bigl(g(x)-g(x_i^\varepsilon)\bigr)
	        \,d\sigma(x).
	\end{align*}
    Hence
	\begin{equation}
	    \left|
	        \int_X
	            g
	        \,d\sigma
	        -
	        \int_X
	            g
	        \,d\sigma_\varepsilon
	    \right|
	    \le
	    \sigma(X)\omega_g(\varepsilon).
	    \label{eq:sec2-compact-weak-error}
	\end{equation}
    The right-hand side tends to zero, which is precisely $\sigma_\varepsilon\rightharpoonup\sigma$.
\end{proof}

Following the steps in Barthe \cite{Barthe2004}, let $\mu$ be an isotropic measure on $S^{n-1}$, so that
	\begin{equation}
	    \int_{S^{n-1}}
	        u\otimes u
	    \,d\mu(u)
	    =
	    I_n.
	    \label{eq:sec2-spherical-isotropy}
	\end{equation}
Taking traces in \eqref{eq:sec2-spherical-isotropy} gives
	\begin{equation}
	    \mu(S^{n-1})
	    =
	    n.
	    \label{eq:sec2-spherical-total-mass}
	\end{equation}
Apply Lemma~\ref{lem:sec2-compact-metric-discretization} to $S^{n-1}$ and write
	\[
	    \mu_\varepsilon
	    =
	    \sum_i
	        \mu(U_i^\varepsilon)\delta_{u_i^\varepsilon}.
	\]
Here $\{u^\varepsilon_i\}$ is the chosen finite maximal $\varepsilon$-net and $\{U^\varepsilon_i\}$ is the corresponding Borel partition, for any fixed $\varepsilon > 0$.
This measure need not be isotropic.
However, it is approximately isotropic.
Indeed, we may compute the following matrix
	\begin{equation}
	    Q_\varepsilon
	    :=
	    \int_{S^{n-1}}
	        u\otimes u
	    \,d\mu_\varepsilon(u)
	    =
	    \sum_i\mu(U_i^\varepsilon)
	        u_i^\varepsilon\otimes u_i^\varepsilon.
	    \label{eq:sec2-rank-one-isotropy-matrix}
	\end{equation}
For unit vectors $u$ and $v$,
	\[
	    \|u\otimes u-v\otimes v\|_{\mathrm{op}}
	    \le 2|u-v|.
	\]
Consequently, since every $u\in U_i^\varepsilon$ satisfies $|u-u_i^\varepsilon|\le\varepsilon$,
	\begin{equation}
	    \|Q_\varepsilon-I_n\|_{\mathrm{op}}
	    \le
	    \sum_i
	    \int_{U_i^\varepsilon}
	        \|u_i^\varepsilon\otimes u_i^\varepsilon-u\otimes u\|_{\mathrm{op}}
	    \,d\mu(u)
	    \le
	    2\varepsilon\mu(S^{n-1})
	    =
	    2n\varepsilon.
        \label{eq:sec2-rank-one-operator-convergence}
	\end{equation}
Thus $Q_\varepsilon\to I_n$ in operator norm.
If $\|Q_\varepsilon-I_n\|_{\mathrm{op}}<1/2$, then every eigenvalue of $Q_\varepsilon$ lies in $[1/2,3/2]$, so $Q_\varepsilon$ is positive definite for sufficiently small $\varepsilon$.
The scalar function $s\mapsto s^{-1/2}$ is Lipschitz on $[1/2,3/2]$.
The spectral theorem therefore gives a constant $C<\infty$ such that
	\[
	    \|Q_\varepsilon^{-1/2}-I_n\|_{\mathrm{op}}
	    \le
	    C\|Q_\varepsilon-I_n\|_{\mathrm{op}}
	\]
for all sufficiently small $\varepsilon$.
Consequently,
	\begin{equation}
	    Q_\varepsilon^{-1/2}
	    \longrightarrow
	    I_n
	    \quad\text{in operator norm as }\varepsilon \to 0.
	    \label{eq:sec2-rank-one-inverse-square-root}
	\end{equation}

For such $\varepsilon$, define
	\[
	    T_\varepsilon(u)
	    :=
	    \frac{Q_\varepsilon^{-1/2}u}
	         {|Q_\varepsilon^{-1/2}u|},
	    \qquad u\in S^{n-1}.
	\]
If $B_\varepsilon=Q_\varepsilon^{-1/2}$, then, for small $\varepsilon$,
	\begin{align*}
	    |T_\varepsilon(u)-u|
	    &=
	    \frac{\big|B_\varepsilon u-|B_\varepsilon u|u\big|}{|B_\varepsilon u|}\\
	    &\le
	    \frac{|B_\varepsilon u-u|+\bigl||B_\varepsilon u|-1\bigr|}{1-\|B_\varepsilon-I_n\|_{\mathrm{op}}}\\
	    &\le
	    \frac{2\|B_\varepsilon-I_n\|_{\mathrm{op}}}{1-\|B_\varepsilon-I_n\|_{\mathrm{op}}}.
	\end{align*}
Taking the supremum over $u\in S^{n-1}$ and using \eqref{eq:sec2-rank-one-inverse-square-root} shows that $T_\varepsilon\to\operatorname{Id}_{S^{n-1}}$ uniformly.
We correct both the locations and the weights by setting
	\begin{equation}
	    \nu_\varepsilon
	    :=
	    \sum_i
	        \mu(U_i^\varepsilon)
	        |Q_\varepsilon^{-1/2}u_i^\varepsilon|^2
	        \delta_{T_\varepsilon(u_i^\varepsilon)}.
	    \label{eq:sec2-rank-one-corrected-measure}
	\end{equation}
The use of $\mu(U_i^\varepsilon)$ in \eqref{eq:sec2-rank-one-corrected-measure} is essential.
Direct computation gives
	\begin{align*}
	    \int_{S^{n-1}}
	        u\otimes u
	    \,d\nu_\varepsilon(u)
	    &=
	    \sum_i\mu(U_i^\varepsilon)
	      (Q_\varepsilon^{-1/2}u_i^\varepsilon)
	      \otimes(Q_\varepsilon^{-1/2}u_i^\varepsilon)\\
	    &=
	    Q_\varepsilon^{-1/2}Q_\varepsilon Q_\varepsilon^{-1/2}
	    =
	    I_n.
	\end{align*}
This standard construction is adapted from Barthe's argument~\cite{Barthe2004}.
We include it for completeness.
We record the construction as a theorem.

\begin{theorem}
    \label{thm:sec2-spherical-isotropic-approximation}
    Every finite isotropic Borel measure on $S^{n-1}$ is the weak limit of finite discrete isotropic measures on $S^{n-1}$.
\end{theorem}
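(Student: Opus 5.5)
The construction preceding the statement already supplies, for every sufficiently small $\varepsilon>0$, a finite discrete measure $\nu_\varepsilon$ on $S^{n-1}$ satisfying $\int u\otimes u\,d\nu_\varepsilon=I_n$. The plan is to show $\nu_\varepsilon\rightharpoonup\mu$ as $\varepsilon\downarrow0$ along any sequence, which proves the theorem. Fix $g\in C(S^{n-1})$ and split
\[
\int g\,d\nu_\varepsilon-\int g\,d\mu
=\Bigl(\int g\,d\nu_\varepsilon-\int g\,d\mu_\varepsilon\Bigr)+\Bigl(\int g\,d\mu_\varepsilon-\int g\,d\mu\Bigr).
\]
The second bracket tends to zero by Lemma~\ref{lem:sec2-compact-metric-discretization}, since $\mu_\varepsilon$ is exactly the discretization $\sigma_\varepsilon$ built there with $\sigma=\mu$ and $X=S^{n-1}$.

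For the first bracket, write it as
\[
\sum_i\mu(U_i^\varepsilon)\Bigl(|Q_\varepsilon^{-1/2}u_i^\varepsilon|^2\,g\bigl(T_\varepsilon(u_i^\varepsilon)\bigr)-g(u_i^\varepsilon)\Bigr)
\]
and bound each term by
\[
\bigl||Q_\varepsilon^{-1/2}u_i^\varepsilon|^2-1\bigr|\,\|g\|_\infty+|Q_\varepsilon^{-1/2}u_i^\varepsilon|^2\,\omega_g\bigl(\sup_u|T_\varepsilon(u)-u|\bigr).
\]
By \eqref{eq:sec2-rank-one-inverse-square-root}, the unit vectors satisfy
\[
\bigl||Q_\varepsilon^{-1/2}u|^2-1\bigr|\le \|Q_\varepsilon^{-1/2}-I_n\|_{\mathrm{op}}\bigl(2+\|Q_\varepsilon^{-1/2}-I_n\|_{\mathrm{op}}\bigr)\to0
\]
uniformly in $u$. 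The weights are therefore uniformly bounded by $2$ for small $\varepsilon$. The uniform convergence $T_\varepsilon\to\operatorname{Id}$ established above, together with uniform continuity of $g$, makes the modulus term vanish. Summing with $\sum_i\mu(U_i^\varepsilon)=\mu(S^{n-1})=n$ bounds the first bracket by $n\|g\|_\infty o(1)+2n\,o(1)\to0$.

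The discreteness and finiteness of $\nu_\varepsilon$ are immediate, since it is a finite sum of point masses with nonnegative weights. There is no real obstacle; the only point needing care is that both the weights and the locations are perturbed, and the splitting above handles each perturbation by a uniform estimate.
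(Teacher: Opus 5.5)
Your proof is correct and follows the paper's argument essentially step by step. You use the same triangle-inequality split through $\mu_\varepsilon$, the same decomposition of each term into a weight perturbation (controlled uniformly via $Q_\varepsilon^{-1/2}\to I_n$) and a location perturbation (controlled by $\omega_g$ and $T_\varepsilon\to\operatorname{Id}$). The only cosmetic difference is that you bound the weights $|Q_\varepsilon^{-1/2}u_i^\varepsilon|^2$ crudely by $2$, whereas the paper uses the exact identity $\sum_i\mu(U_i^\varepsilon)|Q_\varepsilon^{-1/2}u_i^\varepsilon|^2=n$, which comes from taking the trace of the isotropy of $\nu_\varepsilon$.
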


\begin{proof}
    It remains only to verify weak convergence of the corrected measures.
    For $g\in C(S^{n-1})$, let
	\[
	    r_\varepsilon
	    :=
	    \sup_{u\in S^{n-1}}|T_\varepsilon(u)-u|.
	\]
    Put $a_i^\varepsilon:=|Q_\varepsilon^{-1/2}u_i^\varepsilon|^2$.
    The isotropy of $\nu_\varepsilon$ gives $\sum_i\mu(U_i^\varepsilon)a_i^\varepsilon=n$.
    Moreover,
	\[
	    \sup_i|a_i^\varepsilon-1|
	    \le
	    \|Q_\varepsilon^{-1}-I_n\|_{\mathrm{op}}
	    \longrightarrow0.
	\]
    Decomposing
	\[
	    a_i^\varepsilon g(T_\varepsilon(u_i^\varepsilon))-g(u_i^\varepsilon)
	    =
	    a_i^\varepsilon\bigl(g(T_\varepsilon(u_i^\varepsilon))-g(u_i^\varepsilon)\bigr)
	    +(a_i^\varepsilon-1)g(u_i^\varepsilon)
	\]
    and using \eqref{eq:sec2-spherical-total-mass}, we obtain
	\begin{align*}
	    \left|
	        \int
	            g
	        \,d\nu_\varepsilon
	        -
	        \int
	            g
	        \,d\mu_\varepsilon
	    \right|
	    \le
	    n\biggl(&
	        \|g\|_\infty
	        \sup_{u\in S^{n-1}}
	            \bigl||Q_\varepsilon^{-1/2}u|^2-1\bigr|
	        +\omega_g(r_\varepsilon)
	    \biggr).
	\end{align*}
    Both terms tend to zero by
    \eqref{eq:sec2-rank-one-inverse-square-root} and the uniform continuity of $g$.
    Since $\mu_\varepsilon\rightharpoonup\mu$ by Lemma~\ref{lem:sec2-compact-metric-discretization}, the triangle inequality gives $\nu_\varepsilon\rightharpoonup\mu$.
\end{proof}

We next apply this approximation to the geometric rank-one Brascamp--Lieb inequality.
Recall that if $c_i>0$, $u_i\in S^{n-1}$, and $\sum_i c_i u_i\otimes u_i=I_n$, then
	\begin{equation}
	    \int_{\R^n}
	        \prod_i
	            h_i(\langle x,u_i\rangle)^{c_i}
	    \,dx
	    \le
	    \prod_i
	        \left(
	            \int_\R
	                h_i(s)
	            \,ds
	        \right)^{c_i}
	    \label{eq:sec2-rank-one-geometric-bl}
	\end{equation}
for nonnegative integrable inputs; see, for example, \cite{Ball1989,Barthe2004}.

\begin{proof}[Proof of Theorem~\ref{thm:sec1-forward-contin-rank-one-bl}]
    Let $\nu_\varepsilon$ be given by Theorem~\ref{thm:sec2-spherical-isotropic-approximation}.
    For a fixed $x\in\R^n$, the function
	\[
	    u
	    \longmapsto
	    \log F(u,\langle x,u\rangle)
	\]
    is continuous on $S^{n-1}$.
    Also, dominated convergence with the integrable domination $s\mapsto\mathcal F(s)$ shows that
	\[
	    u
	    \longmapsto
	    J(u):=\int_\R F(u,s)\,ds
	\]
    is continuous.
    It is strictly positive, so $\log J$ is continuous and hence bounded on the sphere.
    Weak convergence therefore gives
	\begin{align*}
	    \int\log F(u,\langle x,u\rangle)\,d\nu_\varepsilon(u)
	        &\longrightarrow
	        \int\log F(u,\langle x,u\rangle)\,d\mu(u),\\
	    \int\log J(u)\,d\nu_\varepsilon(u)
	        &\longrightarrow
	        \int\log J(u)\,d\mu(u).
	\end{align*}
    On the other hand, the integrands on the left-hand side of~\eqref{eq:sec1-forward-contin-rank-one-bl}
        \[
            x\mapsto
            \exp
            \left(
            \int_{S^{n-1}}
                \log
                    F(u,\inner{x}{u})
            \,d\mu(u)
            \right)
        \]
    is nonnegative and Lebesgue measurable on $\R^n$.
    Indeed, let $L(x,u) = \log F(u,\inner{x}{u})$.
    Since $F > 0$ is continuous, $L$ is continuous on $S^{n-1}\times \R^n$.
    We split $L$ into positive and negative parts,
        \[
            L = L^+ - L^- := \max\{L,0\} - \max\{-L,0\}.
        \]
    Then both $L^+$ and $L^-$ are nonnegative and measurable.
    By Fubini--Tonelli theorem, 
        \[
            x
            \mapsto
            \int_{S^{n-1}}
                L^+(x,u)
            \,d\mu(u)
            \quad
            \text{and}
            \quad
            x
            \mapsto
            \int_{S^{n-1}}
                L^-(x,u)
            \,d\mu(u)
        \]
    are Lebesgue measurable. Also, since $0\le f_u \le \mathcal{F}\in L^1\cap L^\infty$, we have
        \[
            \int_{S^{n-1}}
                L^+(x,u)
            \,d\mu(u)
            <
            \infty.
        \]
    there is no $\infty - \infty$ ambiguity happens in
        \[
            x
            \mapsto
            \int_{S^{n-1}}
                L(x,u)
            \,d\mu(u)
            =
            \int_{S^{n-1}}
                L^+(x,u)
            \,d\mu(u)
            -
            \int_{S^{n-1}}
                L^-(x,u)
            \,d\mu(u).
        \]

    After discarding zero coefficients, write $\nu_\varepsilon=\sum_i c_i^\varepsilon \delta_{u_i^\varepsilon}$ with $c_i^\varepsilon>0$.
    Fatou's lemma applies to the nonnegative measurable left-hand integrands, and the geometric Brascamp--Lieb inequality~\eqref{eq:sec2-rank-one-geometric-bl} applies because $\sum_i c_i^\varepsilon u_i^\varepsilon\otimes u_i^\varepsilon=I_n$.
    Thus
    	\begin{align*}
    	    \int_{\R^n}
    	        \exp\left(
    	            \int\log F(u,\langle x,u\rangle)\,d\mu(u)
    	        \right)dx
    	    &\le\,
    	    \liminf_{\varepsilon\downarrow0}
    	        \int_{\R^n}
    	        \exp\left(
    	            \int\log F(u,\langle x,u\rangle)\,d\nu_\varepsilon(u)
    	        \right)dx\\
    	    \le\,
    	    \lim_{\varepsilon\downarrow0}
    	        \exp\left(
    	            \int\log J(u)\,d\nu_\varepsilon(u)
    	        \right)
    	    &=\,
    	        \exp\left(
    	            \int
                        \log 
                            \Big(
                                \int_\R
                                    F(u,s)  
                                \,ds
                            \Big)
                    \,d\mu(u)
    	        \right).
    	\end{align*}
    This proves Theorem~\ref{thm:sec1-forward-contin-rank-one-bl} when $\alpha\equiv-\infty$ and $\beta\equiv+\infty$.
    The extension to the case in which one or both endpoints $\alpha$ and $\beta$ are finite follows from the same cutoff and regularization argument used later in the proof of the Grassmannian inequality.
We therefore omit the details here.
\end{proof}

\subsection{Discrete Isotropic Approximation on the Grassmannian}
\label{subsec:sec2-grassmannian-discrete-approximation}

Let $G(n,k)$ be the Grassmannian of $k$-dimensional linear subspaces of $\R^n$.
We identify each subspace $V$ with its orthogonal projection $P_V$, regarded as a linear operator on $\R^n$, and use the metric
	\begin{equation}
	    d(V,W):=\|P_V-P_W\|_{\mathrm{op}}.
	    \label{eq:sec2-grassmannian-metric}
	\end{equation}
This makes $G(n,k)$ a compact metric space.
A finite Borel measure $\mu$ on $G(n,k)$ is \textit{isotropic} when
	\begin{equation}
	    \int_{G(n,k)}P_V\,d\mu(V)=I_n.
	    \label{eq:sec2-grassmannian-isotropy}
	\end{equation}
Since $\tr P_V=k$, taking traces gives
	\begin{equation}
	    \mu(G(n,k))=\frac nk.
	    \label{eq:sec2-grassmannian-total-mass}
	\end{equation}

For every $\varepsilon>0$, choose a finite maximal $\varepsilon$-net $\{V_i^\varepsilon\}_i$.
As in Lemma~\ref{lem:sec2-compact-metric-discretization}, it determines a Borel partition
	\begin{equation}
	    G(n,k)=\bigsqcup_i U_i^\varepsilon,
	    \qquad
	    U_i^\varepsilon\subset
	        \overline B_d(V_i^\varepsilon,\varepsilon).
	    \label{eq:sec2-grassmannian-small-cell-partition}
	\end{equation}
Here, $\overline B_d(V_i^\varepsilon,\varepsilon)$ denotes the closed ball in the topology induced by the metric $d$ centered at $V^\varepsilon_i$ with radius $\varepsilon$.

\begin{remark}
    The rank-one correction does not directly extend to $k>1$.
    If $Q^{-1/2}$ is applied to a $k$-plane $V$, then in general $Q^{-1/2}P_VQ^{-1/2}$ is not a scalar multiple of the orthogonal projection onto $Q^{-1/2}V$.
    The distortion of $Q^{-1/2}|_V$ is anisotropic and cannot be absorbed into a single positive coefficient.
    This is exactly what was possible on a one-dimensional subspace, where every positive quadratic form is scalar.
    More explicitly, the nonzero eigenvalues of $Q^{-1/2}P_VQ^{-1/2}$ are the squares of the singular values of $Q^{-1/2}|_V$.
    If this operator were equal to $cP_W$ for some $c>0$ and $W\in G(n,k)$, then all of its nonzero eigenvalues would equal $c$.
    For $k>1$, the restriction $Q^{-1/2}|_V$ generally has unequal eigenvalues, so this necessary condition fails.
    We therefore preserve the projection identity locally instead of correcting it globally.
    We use the following Tchakaloff-type theorem in the form needed here.
\end{remark}

\begin{theorem}
    \label{thm:sec2-tchakaloff-quadrature}
    Let $(\Omega,\mathcal M,\sigma)$ be a finite positive measure space, and let $X$ be a finite-dimensional space of real-valued $\sigma$-integrable functions on $\Omega$ containing the constant function $1$.
    There are at most $\dim X$ points $\omega_\ell\in\Omega$ and nonnegative coefficients $c_\ell$ such that
	\begin{equation}
	    \int_\Omega h\,d\sigma
	    =\sum_\ell c_\ell h(\omega_\ell),
	    \qquad h\in X.
	    \label{eq:sec2-tchakaloff-exactness}
	\end{equation}
\end{theorem}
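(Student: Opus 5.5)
The proof is a convexity argument in finite dimensions. Consider the moment map
\[
\Phi:\Omega\to X^*,\qquad \Phi(\omega):=\bigl(h\mapsto h(\omega)\bigr),
\]
which is measurable with respect to $\mathcal M$ once we fix a basis $h_1,\dots,h_N$ of $X$, where $N=\dim X$. Concretely, $\Phi(\omega)=(h_1(\omega),\dots,h_N(\omega))\in\R^N$. The target vector is $m:=\int_\Omega\Phi\,d\sigma\in\R^N$, which is well defined because each $h_j$ is integrable. The claim is equivalent to saying that $m$ lies in the convex cone generated by $\Phi(\Omega)$, written with at most $N$ generators. By Carathéodory's theorem for cones, any point of $\operatorname{cone}\Phi(\Omega)$ is a nonnegative combination of at most $N$ points of $\Phi(\Omega)$. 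So the whole task is to show
\[
m\in\operatorname{cone}\Phi(\Omega).
\]

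We may assume $\sigma(\Omega)>0$; otherwise $m=0$ and the empty sum works. We prove the membership by induction on $N$, separating $m$ from the cone whenever it fails to lie in it. Let $C:=\operatorname{cone}\Phi(\Omega)$, which is convex but possibly not closed. If $m\notin C$, one of two things happens.

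\emph{Case 1: $m\notin\overline C$.} Then a separating vector $a\in\R^N$ satisfies $\langle a,\Phi(\omega)\rangle\ge0$ for all $\omega$ and $\langle a,m\rangle<0$. This is impossible, since
\[
\langle a,m\rangle=\int\langle a,\Phi\rangle\,d\sigma\ge0.
\]

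\emph{Case 2: $m\in\overline C\setminus C$.} Then $m$ lies on the relative boundary of $C$. More precisely, we first reduce to the case where $C$ has nonempty interior in $\operatorname{span}\Phi(\Omega)$, and replace $\R^N$ by that span, which contains $m$. A supporting hyperplane at $m$ then gives $a\neq0$ on the span with $\langle a,\Phi\rangle\ge0$ on $\Omega$ and $\langle a,m\rangle=0$. Hence
\[
\int\langle a,\Phi\rangle\,d\sigma=0,
\]
so the nonnegative function $g:=\langle a,\Phi\rangle$ vanishes $\sigma$-a.e. Set $\Omega':=\{g=0\}$. Then $\sigma(\Omega\setminus\Omega')=0$, so $m=\int_{\Omega'}\Phi\,d\sigma$. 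Moreover $\Phi(\Omega')$ lies in the hyperplane $a^\perp$, and $\Phi(\Omega')$ does not span the old span, since $a$ was nonzero there. Applying the induction hypothesis to $(\Omega',\sigma|_{\Omega'})$ with the lower-dimensional span of $\Phi(\Omega')$ gives $m\in\operatorname{cone}\Phi(\Omega')\subset C$. This contradicts $m\notin C$.

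The induction is on $\dim\operatorname{span}\Phi(\Omega)$ rather than on $\dim X$. The base case is dimension $0$: then $\Phi\equiv0$, $m=0$, and the empty combination works. The constant function $1$ in $X$ guarantees that $\sum_\ell c_\ell=\sigma(\Omega)$. It also ensures $\Phi$ never vanishes, but that is not needed for the argument. Carathéodory then caps the number of points at $N$. Only the quadrature identity for $h\in X$ is required, and this holds by linearity once it holds on the basis.

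The main obstacle is Case 2. The cone need not be closed, and the induction must correctly handle shrinking $\Omega$ to a full-measure subset. The key observation making this work is that a nonnegative function with zero integral vanishes almost everywhere. One must also check that the restricted span strictly drops in dimension, so that the induction terminates. This is the classical proof of Tchakaloff's theorem in the form of Bayer–Teichmann, and alternatively one may simply cite \cite[Theorem~1.1]{SchaeferUllrich2025}.
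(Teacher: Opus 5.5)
Your proof is correct. It differs from the paper mainly in that the paper gives no proof of this statement: it records it and defers entirely to the generalized Tchakaloff theorem of Sch\"afer and Ullrich \cite[Theorem~1.1]{SchaeferUllrich2025}.

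You instead supply a self-contained argument of Bayer--Teichmann type:
\begin{itemize}
\item reduce to $m=\int\Phi\,d\sigma\in\operatorname{cone}\Phi(\Omega)$;
\item rule out $m\notin\overline C$ by separation;
\item in the boundary case, use a supporting functional $a$ with $\langle a,\Phi\rangle\ge 0$ and $\langle a,m\rangle=0$ to pass to the full-measure set $\{\langle a,\Phi\rangle=0\}$, on which $\dim\operatorname{span}\Phi$ strictly drops;
\item apply conic Carath\'eodory to bound the number of nodes by $\dim\operatorname{span}\Phi(\Omega)\le\dim X$.
\end{itemize}

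Two steps you present as needing work are in fact automatic. First, $C$ always has nonempty interior relative to $L=\operatorname{span}\Phi(\Omega)$, since it contains $0$ and a basis of $L$. Second, the supporting inequality $\langle a,x\rangle\ge\langle a,m\rangle$ on $\overline C$ already gives $\langle a,m\rangle=0$ and $\langle a,\cdot\rangle\ge0$ on $C$: test $x=0$ and $x=2m$, then rescale.

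Your route makes visible exactly which hypotheses the application in Theorem~\ref{thm:sec2-grassmannian-isotropic-approximation} uses. It needs only measurability and integrability of finitely many test functions, with no topology or closedness of $\Omega$. So it applies verbatim to the Borel cells $U_i^\varepsilon$ and places the nodes inside each cell.

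The paper's citation buys brevity and whatever additional generality the reference provides. The price is that the reader must check that its hypotheses cover restrictions of the bounded Borel functions $V\mapsto\langle P_Ve_a,e_b\rangle$ to non-closed cells.
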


We use the generalized Tchakaloff theorem in the form established by Sch\"afer and Ullrich~\cite[Theorem~1.1]{SchaeferUllrich2025}.
We apply it separately on small Borel cells.

\begin{theorem}
    \label{thm:sec2-grassmannian-isotropic-approximation}
    Every isotropic Borel measure $\mu$ on $G(n,k)$ is the weak limit of finite discrete isotropic measures on $G(n,k)$.
\end{theorem}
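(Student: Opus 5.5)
We apply the Tchakaloff-type quadrature (Theorem~\ref{thm:sec2-tchakaloff-quadrature}) on each cell of a fine partition, so that mass and the projection integral are preserved cell by cell. Fix $\varepsilon>0$ and take the finite maximal $\varepsilon$-net $\{V_i^\varepsilon\}$ with the Borel partition \eqref{eq:sec2-grassmannian-small-cell-partition}, built as in Lemma~\ref{lem:sec2-compact-metric-discretization} for the metric \eqref{eq:sec2-grassmannian-metric}.

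On each cell with $\mu(U_i^\varepsilon)>0$, consider the finite measure space $(U_i^\varepsilon,\mathcal B,\mu|_{U_i^\varepsilon})$. Let $X$ be the span of the constant function $1$ and the matrix-entry functions $V\mapsto(P_V)_{ab}$ for $1\le a\le b\le n$. These functions are continuous and bounded by $1$, hence integrable, and $\dim X\le 1+n(n+1)/2$. Theorem~\ref{thm:sec2-tchakaloff-quadrature} then gives finitely many points $V_{i,\ell}^\varepsilon\in U_i^\varepsilon$ and weights $c_{i,\ell}^\varepsilon\ge0$ satisfying \eqref{eq:sec1-local-mass-preservation} and \eqref{eq:sec1-local-projection-preservation}. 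The essential point is that the nodes lie in the cell itself, not merely in $G(n,k)$. This is exactly what the version of Sch\"afer--Ullrich applied to the restricted measure on the measurable set $U_i^\varepsilon$ provides. Cells of zero measure are discarded. Define
\[
\nu_\varepsilon:=\sum_{i,\ell}c_{i,\ell}^\varepsilon\delta_{V_{i,\ell}^\varepsilon}.
\]
Summing \eqref{eq:sec1-local-projection-preservation} over $i$ and using \eqref{eq:sec2-grassmannian-isotropy} gives $\int P_V\,d\nu_\varepsilon=I_n$. So $\nu_\varepsilon$ is a finite discrete isotropic measure, and by \eqref{eq:sec1-local-mass-preservation} its total mass is $n/k$.

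For weak convergence, fix $g\in C(G(n,k))$ and let $\omega_g$ be its modulus of continuity, which tends to $0$ by compactness. By \eqref{eq:sec1-local-mass-preservation},
\[
\int g\,d\nu_\varepsilon-\int g\,d\mu=\sum_i\Bigl(\sum_\ell c_{i,\ell}^\varepsilon g(V_{i,\ell}^\varepsilon)-\int_{U_i^\varepsilon}g\,d\mu\Bigr)=\sum_i\Bigl(\sum_\ell c_{i,\ell}^\varepsilon\bigl(g(V_{i,\ell}^\varepsilon)-g(V_i^\varepsilon)\bigr)-\int_{U_i^\varepsilon}\bigl(g-g(V_i^\varepsilon)\bigr)d\mu\Bigr).
\]
Every point of $U_i^\varepsilon$ lies within distance $\varepsilon$ of $V_i^\varepsilon$. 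Hence each summand is bounded by $2\mu(U_i^\varepsilon)\omega_g(\varepsilon)$, and the total is bounded by $2(n/k)\,\omega_g(\varepsilon)\to0$.

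The main obstacle is the measure-theoretic input to the quadrature theorem, not the estimates. The classical Tchakaloff theorem is usually stated for compact supports and polynomial spaces, whereas here the cells $U_i^\varepsilon$ are only Borel sets. We therefore need the generalized statement of \cite[Theorem~1.1]{SchaeferUllrich2025}, which places the nodes in $\Omega=U_i^\varepsilon$ for an arbitrary finite measure space and a finite-dimensional space of integrable functions. With that version, nodes inside the cell are automatic.

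If one preferred to avoid it, a direct argument would also work. The vector $\int_{U_i}(1,P_V)\,d\mu$ lies in $\mu(U_i)\cdot\overline{\mathrm{conv}}\{(1,P_V):V\in U_i\}$. By Carath\'eodory's theorem it is a nonnegative combination of finitely many such vectors. This holds provided the moment vector lies in the convex hull itself and not only in its closure, which is the subtle point the cited theorem settles. Alternatively, one could take nodes in the closure $\overline{U_i^\varepsilon}\subset\overline B_d(V_i^\varepsilon,\varepsilon)$, which is compact. The convergence estimate above is unchanged, since it uses only that the nodes lie within distance $\varepsilon$ of $V_i^\varepsilon$.
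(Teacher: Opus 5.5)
Your proposal is correct and follows essentially the same route as the paper: a cellwise application of the Tchakaloff-type theorem to the span of $1$ and the entries $V\mapsto\langle P_Ve_a,e_b\rangle$, summation of the local projection identities to obtain exact isotropy, and a modulus-of-continuity estimate for weak convergence. The only cosmetic difference is that you compare with the net point $V_i^\varepsilon$ to get the bound $2(n/k)\,\omega_g(\varepsilon)$, whereas the paper bounds each cell by its oscillation and gets $(n/k)\,\omega_g(2\varepsilon)$.
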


\begin{proof}
    Fix $\varepsilon>0$ and use the partition in~\eqref{eq:sec2-grassmannian-small-cell-partition}.
    On every cell of positive $\mu$-measure, apply Theorem~\ref{thm:sec2-tchakaloff-quadrature} to the restrictions of the functions in
	\[
	    X
	    :=
	    \operatorname{span}
	    \left(
	        \{1\}\cup
	        \{V\mapsto\langle P_Ve_a,e_b\rangle:1\le a\le b\le n\}
	    \right).
	\]
    Here $\{e_1,\dots,e_n\}$ is the standard basis of $\R^n$.
    All these functions are bounded and Borel measurable.
    The dimension of $X$ is at most $1+n(n+1)/2$, so the number of quadrature nodes on each cell is bounded independently of $\varepsilon$ and of the cell.
    We obtain points $V_{i,\ell}^\varepsilon\in U_i^\varepsilon$ and nonnegative weights $c_{i,\ell}^\varepsilon$ such that exactness for the constant function gives
	\begin{equation}
	    \sum_\ell c_{i,\ell}^\varepsilon
	    =\mu(U_i^\varepsilon),
	    \label{eq:sec2-local-mass-exactness}
	\end{equation}
    whereas exactness for the matrix entries gives
	\begin{equation}
	    \sum_\ell c_{i,\ell}^\varepsilon
	        P_{V_{i,\ell}^\varepsilon}
	    =
	    \int_{U_i^\varepsilon}P_V\,d\mu(V).
	    \label{eq:sec2-local-projection-exactness}
	\end{equation}
    The restriction $a\le b$ is sufficient because every $P_V$ is symmetric.
    If $\mu(U_i^\varepsilon)=0$, we use no node on that cell; both identities then hold with empty sums.

    Define the local and global discrete measures by
	\begin{equation}
	    \mu_i^\varepsilon
	    :=\sum_\ell c_{i,\ell}^\varepsilon
	        \delta_{V_{i,\ell}^\varepsilon},
	    \qquad
	    \nu_\varepsilon:=\sum_i\mu_i^\varepsilon.
	    \label{eq:sec2-grassmannian-discrete-measure}
	\end{equation}
    Summing \eqref{eq:sec2-local-projection-exactness} over the cells and using \eqref{eq:sec2-grassmannian-isotropy}, we find
	\begin{equation}
	    \int_{G(n,k)}P_V\,d\nu_\varepsilon(V)
	    =I_n.
	    \label{eq:sec2-discrete-grassmannian-isotropy}
	\end{equation}
    After zero coefficients are discarded, all weights are positive real numbers.

    It remains to prove weak convergence.
    If $g\in C(G(n,k))$, then any two points in the same cell $U_i^\varepsilon$ are at distance at most $2\varepsilon$.
    By \eqref{eq:sec2-local-mass-exactness}, both the integral of $g$ over a cell and its quadrature sum lie between the cell mass times the minimum and the maximum of $g$ on that cell.
    Hence
	\begin{align}
	    \left|
	        \int g\,d\mu-
	        \int g\,d\nu_\varepsilon
	    \right|
	    &\le
	    \sum_i\mu(U_i^\varepsilon)
	        \operatorname{osc}_{U_i^\varepsilon}g\notag\\
	    &\le
	    \mu(G(n,k))\omega_g(2\varepsilon)
	    =\frac nk\,\omega_g(2\varepsilon)
	    \longrightarrow0.
	    \label{eq:sec2-grassmannian-weak-error}
	\end{align}
    Here $\omega_g$ is the modulus of continuity with respect to \eqref{eq:sec2-grassmannian-metric}; it tends to zero because the Grassmannian is compact, and we write
	\[
	    \operatorname{osc}_{U_i^\varepsilon}g
	    :=
	    \sup_{V\in U_i^\varepsilon}g(V)
	    -
	    \inf_{V\in U_i^\varepsilon}g(V).
	\]
    Thus $\nu_\varepsilon\rightharpoonup\mu$.
\end{proof}

\subsection{Formal Passage to the Continuous Inequality}
\label{subsec:sec2-formal-continuous-passage}

We now give the main approximation argument.
Assuming for the moment the continuity and measurability statements established in Subsection~\ref{subsec:sec2-continuity-measurability}, we use the following three facts: for each fixed $x\in\R^n$,
	\[
	    V
	    \longmapsto
	    \log F(V,P_Vx)
	\]
is continuous; the function
	\[
	    V
	    \longmapsto
	    \log
	        \left(
	            \int_V
	                F(V,y)
	            \,d\lambda_V(y)
	        \right)
	\]
is continuous; and the resulting left-hand integrands on $\R^n$ are Lebesgue measurable.
We now prove the main Grassmannian inequality stated in the Introduction.

\begin{proof}[Proof of Theorem~\ref{thm:sec1-forward-grassmannian-bl}]
    Let $\nu_\varepsilon$ be the discrete isotropic measures supplied by Theorem~\ref{thm:sec2-grassmannian-isotropic-approximation}.
    After reindexing and discarding zero weights, write
	\begin{equation}
	    \nu_\varepsilon
	    =\sum_{i=1}^{N(\varepsilon)}
	        c_i^\varepsilon\delta_{V_i^\varepsilon},
	    \qquad
	    c_i^\varepsilon>0,
	    \qquad
	    \sum_i c_i^\varepsilon P_{V_i^\varepsilon}=I_n.
	    \label{eq:sec2-discrete-equal-rank-datum}
	\end{equation}
    The equal-rank geometric Brascamp--Lieb inequality~\cite{BCCT} applied to~\eqref{eq:sec2-discrete-equal-rank-datum} gives
	\begin{align}
	    &\int_{\R^n}
	        \exp\left(
	            \int_{G(n,k)}
	                \log F(V,P_Vx)\,d\nu_\varepsilon(V)
	        \right)dx\notag\\
	    \le\,&
	        \exp\left(
	            \int_{G(n,k)}
	                \log\left(
	                    \int_VF(V,y)\,d\lambda_V(y)
	                \right)d\nu_\varepsilon(V)
	        \right).
	    \label{eq:sec2-discrete-equal-rank-bl}
	\end{align}

    For each fixed $x$, the weak convergence $\nu_\varepsilon\rightharpoonup\mu$ implies convergence of the logarithm in the left-hand integrand.
    Fatou's lemma is applicable because the exponentials are nonnegative and, by the measurability statement deferred to Subsection~\ref{subsec:sec2-continuity-measurability}, Lebesgue measurable.
    Therefore,
	\begin{align*}
	    &\int_{\R^n}
	        \exp\left(
	            \int\log F(V,P_Vx)\,d\mu(V)
	        \right)dx\\
	    \le\,&
	    \liminf_{\varepsilon\downarrow0}
	        \int_{\R^n}
	        \exp\left(
	            \int\log F(V,P_Vx)\,d\nu_\varepsilon(V)
	        \right)dx.
	\end{align*}
    On the right-hand side in~\eqref{eq:sec2-discrete-equal-rank-bl}, the logarithm part is a continuous, hence bounded, test function on the compact Grassmannian.
    Weak convergence consequently gives
	\[
	    \int
	        \log\left(\int_VF(V,y)\,d\lambda_V(y)\right)
	        d\nu_\varepsilon(V)
	    \longrightarrow
	    \int
	        \log\left(\int_VF(V,y)\,d\lambda_V(y)\right)
	        d\mu(V).
	\]
    Combining this limit with \eqref{eq:sec2-discrete-equal-rank-bl} proves \eqref{eq:sec1-forward-grassmannian-bl}.
\end{proof}

\subsection{Continuity and Measurability}
\label{subsec:sec2-continuity-measurability}

We now establish the analytic statements used in the proof of Theorem~\ref{thm:sec1-forward-grassmannian-bl}.
The first observation follows immediately from the projection metric.

\begin{lemma}
    \label{lem:sec2-projected-input-continuity}
    For every fixed $x\in\R^n$, the map $V\mapsto P_Vx$ is continuous from $G(n,k)$ to $\R^n$.
    Consequently,
	\[
	    V\longmapsto\log F(V,P_Vx)
	\]
    is continuous on $G(n,k)$.
\end{lemma}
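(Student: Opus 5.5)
The proof is a direct estimate from the definition of the projection metric~\eqref{eq:sec2-grassmannian-metric}, followed by composition with continuous maps. Fix $x\in\R^n$. For $V,W\in G(n,k)$ we have
\[
    |P_Vx-P_Wx|
    \le
    \|P_V-P_W\|_{\mathrm{op}}\,|x|
    =
    d(V,W)\,|x|.
\]
So $V\mapsto P_Vx$ is Lipschitz with constant $|x|$ from $(G(n,k),d)$ to $\R^n$, and in particular continuous.

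For the second claim, we view $T_{n,k}$ with the subspace topology inherited from $G(n,k)\times\R^n$. The map
\[
    \Phi_x:G(n,k)\to G(n,k)\times\R^n,
    \qquad
    \Phi_x(V):=(V,P_Vx),
\]
is continuous because each component is continuous. Its image lies in $T_{n,k}$, since $P_Vx\in V$. Hence $\Phi_x$ is continuous as a map into $T_{n,k}$, because the topology there is the subspace topology.

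Since $F:T_{n,k}\to(0,\infty)$ is continuous and $\log$ is continuous on $(0,\infty)$, the composition $V\mapsto\log F(\Phi_x(V))=\log F(V,P_Vx)$ is continuous on $G(n,k)$.

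No step here is a real obstacle. The only point needing care is to state explicitly that $T_{n,k}$ carries the topology induced from $G(n,k)\times\R^n$. This is the topology in which the continuity hypothesis on $F$ in Theorem~\ref{thm:sec1-forward-grassmannian-bl} is understood, so the composition argument applies. Strict positivity of $F$ ensures that $\log F$ is finite and continuous, so it is a bounded test function on the compact Grassmannian, as required in the proof of Theorem~\ref{thm:sec1-forward-grassmannian-bl}.
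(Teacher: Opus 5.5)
Your proof is correct and follows essentially the same route as the paper. Both use the operator-norm estimate $|P_Vx-P_Wx|\le\|P_V-P_W\|_{\mathrm{op}}|x|$ to get continuity of $V\mapsto(V,P_Vx)$ into $T_{n,k}$, and then compose with the continuous, strictly positive $F$ and with $\log$; you phrase the first step as a Lipschitz bound, whereas the paper uses sequences $V_j\to V$.
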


\begin{proof}
    If $V_j\to V$, then
	\[
	    |P_{V_j}x-P_Vx|
	    \le\|P_{V_j}-P_V\|_{\mathrm{op}}|x|
	    \longrightarrow0.
	\]
    Thus $(V_j,P_{V_j}x)\to(V,P_Vx)$ in $T_{n,k}$.
    The second assertion follows from the continuity and strict positivity of $F$.
\end{proof}

To compare integrals over nearby fibers, we require isometries that vary continuously with the subspace.

\begin{lemma}
    \label{lem:sec2-nearby-subspace-isometry}
    Suppose that $V_j\to V$ in $G(n,k)$.
    For all sufficiently large $j$, define
	\begin{equation}
	    L_j:=P_{V_j}|_V:V\to V_j,
	    \qquad
	    A_j:=L_j^*L_j=P_VP_{V_j}|_V,
	    \qquad
	    O_j:=L_jA_j^{-1/2}.
	    \label{eq:sec2-nearby-subspace-polar-map}
	\end{equation}
    Then $O_j:V\to V_j$ is a linear isometric isomorphism and
	\begin{equation}
	    \|O_j-I_V\|_{V\to\R^n}\longrightarrow0,
	    \label{eq:sec2-nearby-isometry-convergence}
	\end{equation}
    where $I_V$ denotes the inclusion of $V$ into $\R^n$.
    Moreover, the pushforward measure
	\begin{equation}
	    (O_j)_\#\lambda_V=\lambda_{V_j}.
	    \label{eq:sec2-isometry-pushforward-measure}
	\end{equation}
\end{lemma}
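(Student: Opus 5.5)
The plan is a polar-decomposition argument, with all estimates controlled by the single quantity $\delta_j:=\|P_{V_j}-P_V\|_{\mathrm{op}}\to0$.

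\textbf{Step 1: $A_j$ is invertible.} By definition $A_j=L_j^*L_j$, where $L_j^*:V_j\to V$ is $P_V|_{V_j}$. Hence $A_j=P_VP_{V_j}|_V$, a self-adjoint positive semidefinite operator on $V$. For $v\in V$ we have
\[
\langle A_jv,v\rangle=|P_{V_j}v|^2
\qquad\text{and}\qquad
|P_{V_j}v-v|=|(P_{V_j}-P_V)v|\le\delta_j|v|.
\]
It follows that $(1-\delta_j)^2|v|^2\le\langle A_jv,v\rangle\le|v|^2$. Equivalently, $\|A_j-I_V\|_{\mathrm{op}}\le 2\delta_j$ once $\delta_j<1$. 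So for large $j$ the operator $A_j$ is positive definite, and $A_j^{-1/2}$ is well defined by the spectral theorem.

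\textbf{Step 2: $O_j$ is an isometric isomorphism.} We compute
\[
O_j^*O_j=A_j^{-1/2}L_j^*L_jA_j^{-1/2}=A_j^{-1/2}A_jA_j^{-1/2}=I_V,
\]
so $O_j$ is an isometry from $V$ into $V_j$. Since $\dim V=\dim V_j=k$, an injective linear map between them is surjective. Hence $O_j$ is a linear isometric isomorphism.

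\textbf{Step 3: the pushforward identity \eqref{eq:sec2-isometry-pushforward-measure}.} The measure $\lambda_{V_j}$ is the $k$-dimensional Hausdorff measure (equivalently, Lebesgue measure transported by any orthonormal basis) on $V_j$. An isometric isomorphism maps an orthonormal basis of $V$ to one of $V_j$. Both measures are therefore images of Lebesgue measure on $\R^k$ under isometries, which gives $(O_j)_\#\lambda_V=\lambda_{V_j}$.

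\textbf{Step 4: the convergence \eqref{eq:sec2-nearby-isometry-convergence}.} Write
\[
O_j-I_V=L_j(A_j^{-1/2}-I_V)+(L_j-I_V).
\]
The second term has norm at most $\delta_j$, by the estimate in Step 1. For the first term, note $\|L_j\|\le1$. Since $s\mapsto s^{-1/2}$ is Lipschitz on $[1/2,3/2]$, the spectral theorem gives
\[
\|A_j^{-1/2}-I_V\|\le C\|A_j-I_V\|\le 2C\delta_j,
\]
exactly as in the rank-one argument for $Q_\varepsilon^{-1/2}$. Combining the two terms yields $\|O_j-I_V\|_{V\to\R^n}\le(1+2C)\delta_j\to0$.

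The only mildly delicate point is keeping track of domains and codomains, namely that $L_j^*$ is $P_V|_{V_j}$ and that $O_j$ lands in $V_j$. The quantitative bounds themselves are routine.
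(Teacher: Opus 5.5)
Your proposal is correct and follows the paper's polar-decomposition argument essentially step for step: invertibility of $A_j$ from $\|P_{V_j}-P_V\|_{\mathrm{op}}\to0$, then $O_j^*O_j=I_V$ together with equality of dimensions, then the splitting $O_j-I_V=L_j(A_j^{-1/2}-I_V)+(L_j-I_V)$, and finally the pushforward identity via orthonormal bases. The differences are cosmetic: you bound $\|A_j-I_V\|\le2\delta_j$ through the quadratic form instead of the paper's direct estimate $\|A_j-I_V\|\le\delta_j$, and you make the rate $O(\delta_j)$ explicit with the Lipschitz bound where the paper only invokes continuity of the functional calculus.
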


\begin{proof}
    For $y\in V$,
	\[
	    |L_jy-y|
	    =|(P_{V_j}-P_V)y|
	    \le\|P_{V_j}-P_V\|_{\mathrm{op}}|y|.
	\]
    Thus $L_j$ is injective, and hence bijective, for all sufficiently large $j$.
    Its adjoint is $L_j^*=P_V|_{V_j}$, so the identity for $A_j$ in~\eqref{eq:sec2-nearby-subspace-polar-map} is
	\[
	    L_j^*L_j=P_VP_{V_j}|_V.
	\]
    Moreover,
	\[
	    \|A_j-I_V\|_{V\to V}
	    \le\|P_{V_j}-P_V\|_{\mathrm{op}}
	    \longrightarrow0.
	\]
    The operators $A_j$ are therefore positive definite for large $j$, and the continuous functional calculus gives $A_j^{-1/2}\to I_V$ in operator norm.
    Since
	\[
	    O_j^*O_j
	    =A_j^{-1/2}L_j^*L_jA_j^{-1/2}=I_V,
	\]
    $O_j$ is an isometry; equality of dimensions makes it onto $V_j$.
    Finally,
	\begin{align*}
	    \|O_j-I_V\|_{V\to\R^n}
	    &\le
	    \|L_j\|\,\|A_j^{-1/2}-I_V\|
	    +\|L_j-I_V\|_{V\to\R^n}
	    \longrightarrow0,
	\end{align*}
    which proves \eqref{eq:sec2-nearby-isometry-convergence}.

    To prove \eqref{eq:sec2-isometry-pushforward-measure}, choose linear isometries $U:\R^k\to V$ and $S:\R^k\to V_j$.
    The map $S^{-1}O_jU$ is orthogonal on $\R^k$ and hence preserves $k$-dimensional Lebesgue measure.
    Since $\lambda_V=U_\#\lambda_{\R^k}$ and $\lambda_{V_j}=S_\#\lambda_{\R^k}$, the pushforward identity follows.
\end{proof}

\begin{lemma}
    \label{lem:sec2-continuity-fiber-integral}
    Under \eqref{eq:sec1-grassmannian-radial-majorant}, the map
	\begin{equation}
	    V\longmapsto
	    J(V):=\int_VF(V,y)\,d\lambda_V(y)
	    \label{eq:sec2-fiber-integral}
	\end{equation}
    is finite, strictly positive, and continuous on $G(n,k)$.
    Consequently, $V\mapsto\log J(V)$ is continuous.
\end{lemma}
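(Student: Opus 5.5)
Finiteness and positivity come first. For finiteness, pass to polar coordinates in the $k$-dimensional space $V$ and use the radial majorant in \eqref{eq:sec1-grassmannian-radial-majorant}:
\[
J(V)\le\int_V\mathcal F(|y|)\,d\lambda_V(y)=k\omega_k\int_0^\infty\mathcal F(r)r^{k-1}\,dr<\infty .
\]
This bound is uniform in $V$. Strict positivity holds because $y\mapsto F(V,y)$ is continuous and strictly positive on $V$; in particular it is bounded below by a positive constant on the unit ball $B_V(0,1)$, which has positive $\lambda_V$-measure. Once continuity of $J$ is established, continuity of $\log J$ follows immediately from $J>0$.

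For continuity, since $G(n,k)$ is a metric space it suffices to prove sequential continuity. Fix $V_j\to V$ and take the isometries $O_j:V\to V_j$ from Lemma~\ref{lem:sec2-nearby-subspace-isometry}. By the pushforward identity \eqref{eq:sec2-isometry-pushforward-measure}, every fiber integral can be pulled back to the fixed space $V$:
\[
J(V_j)=\int_{V_j}F(V_j,z)\,d\lambda_{V_j}(z)=\int_VF(V_j,O_jy)\,d\lambda_V(y).
\]
This moves the variation in $V_j$ from the domain of integration into the integrand.

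The integrand on the right converges pointwise and is dominated, so dominated convergence closes the argument. For each fixed $y\in V$, \eqref{eq:sec2-nearby-isometry-convergence} gives $|O_jy-y|\le\|O_j-I_V\|\,|y|\to0$, so $(V_j,O_jy)\to(V,y)$ in $T_{n,k}$. Continuity of $F$ then gives $F(V_j,O_jy)\to F(V,y)$. Because $O_j$ is an isometry, $|O_jy|=|y|$, and therefore $F(V_j,O_jy)\le\mathcal F(|y|)$. The majorant $y\mapsto\mathcal F(|y|)$ is a single function on $V$, independent of $j$, and it is $\lambda_V$-integrable by the polar-coordinate computation above. Dominated convergence yields $J(V_j)\to J(V)$.

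The only delicate point is this last domination step. It is exactly where the radial structure of the majorant is used: without it, transporting along $O_j$ would not preserve the bound. Since $O_j$ preserves norms, the issue is resolved. Measurability of $y\mapsto F(V_j,O_jy)$ on $V$ also needs a check, but it is immediate because the map is a composition of continuous maps.
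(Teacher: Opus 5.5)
Your proposal is correct and follows the paper's proof essentially step for step. Like the paper, you pull $J(V_j)$ back to the fixed fiber $V$ using the isometries $O_j$ of Lemma~\ref{lem:sec2-nearby-subspace-isometry} and the pushforward identity, apply dominated convergence with the $O_j$-invariant radial majorant $\mathcal F(|y|)$ (integrable by polar coordinates), and get positivity from a positive lower bound of $F(V,\cdot)$ near the origin of $V$.
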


\begin{proof}
    Let $V_j\to V$, and let $O_j:V\to V_j$ be the isometries from Lemma~\ref{lem:sec2-nearby-subspace-isometry}.
    The pushforward identity~\eqref{eq:sec2-isometry-pushforward-measure} gives
	\begin{equation}
	    \int_{V_j}F(V_j,z)\,d\lambda_{V_j}(z)
	    =
	    \int_VF(V_j,O_jy)\,d\lambda_V(y).
	    \label{eq:sec2-moving-fiber-pullback}
	\end{equation}
    For each $y\in V$, we have $(V_j,O_jy)\to(V,y)$ by~\eqref{eq:sec2-nearby-isometry-convergence}, and therefore $F(V_j,O_jy)\to F(V,y)$.
    Since $O_j$ is an isometry,
	\[
	    0<F(V_j,O_jy)\le\mathcal F(|O_jy|)=\mathcal F(|y|).
	\]
    The dominating function is integrable on $V$, because polar coordinates give
	\begin{equation}
	    \int_V\mathcal F(|y|)\,d\lambda_V(y)
	    =|S^{k-1}|\int_0^\infty\mathcal F(r)r^{k-1}\,dr<\infty.
	    \label{eq:sec2-radial-majorant-fiber-integral}
	\end{equation}
    Here $|S^{k-1}|$ denotes the surface area of $S^{k-1}$.
    The dominated convergence theorem is therefore applicable to the fixed-fiber integral in \eqref{eq:sec2-moving-fiber-pullback}, and it proves $J(V_j)\to J(V)$.

    Finiteness follows from the same dominating function.
    For positivity, continuity and $F(V,0)>0$ give a neighborhood of $0$ in $V$ on which $F(V,\cdot)$ has a positive lower bound.
    This neighborhood has positive $k$-dimensional Lebesgue measure, so $J(V)>0$.
    The last assertion follows by composition with the logarithm.
\end{proof}

\begin{lemma}
    \label{lem:sec2-measurability-continuous-integrand}
    Under the assumptions of Theorem~\ref{thm:sec1-forward-grassmannian-bl}, the map
	\begin{equation}
	    x\longmapsto
	    \exp\left(
	        \int_{G(n,k)}\log F(V,P_Vx)\,d\mu(V)
	    \right)
	    \label{eq:sec2-measurable-continuous-integrand}
	\end{equation}
    is Lebesgue measurable.
    The analogous map with $\mu$ replaced by any finite discrete measure is continuous.
\end{lemma}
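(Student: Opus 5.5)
The plan is to treat the inner integral as a function of $x$ and handle its positive and negative parts separately, in the same way as in the rank-one proof of Theorem~\ref{thm:sec1-forward-contin-rank-one-bl}.

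Set
\[
L(x,V):=\log F(V,P_Vx),\qquad (x,V)\in\R^n\times G(n,k).
\]
First I will check that $L$ is jointly continuous on $\R^n\times G(n,k)$. If $(x_j,V_j)\to(x,V)$, then
\[
|P_{V_j}x_j-P_Vx|\le|x_j-x|+\|P_{V_j}-P_V\|_{\mathrm{op}}|x|,
\]
so $(V_j,P_{V_j}x_j)\to(V,P_Vx)$ in $T_{n,k}$. This is the same argument as in Lemma~\ref{lem:sec2-projected-input-continuity}, now with $x$ allowed to move. Continuity and strict positivity of $F$ then give continuity of $L$. In particular $L$, and hence $L^\pm:=\max\{\pm L,0\}$, are Borel on the product $\R^n\times G(n,k)$ with its product Borel $\sigma$-algebra; both factors are separable metric spaces, so this $\sigma$-algebra agrees with the Borel $\sigma$-algebra of the product.

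Next I apply Tonelli's theorem with $\mu$ finite and Lebesgue measure $\sigma$-finite. It shows that
\[
x\mapsto\int L^+(x,V)\,d\mu(V)\qquad\text{and}\qquad x\mapsto\int L^-(x,V)\,d\mu(V)
\]
are Borel measurable functions with values in $[0,\infty]$. The positive part is controlled by the majorant: $L^+\le\log^+\|\mathcal F\|_\infty$, so
\[
\int L^+\,d\mu\le\frac nk\,\log^+\|\mathcal F\|_\infty<\infty.
\]
Hence $\int L\,d\mu=\int L^+\,d\mu-\int L^-\,d\mu$ is a well-defined Borel function with values in $[-\infty,\infty)$, with no $\infty-\infty$ ambiguity. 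Composing with $\exp$, using the convention $e^{-\infty}=0$, which is continuous on $[-\infty,\infty)$, gives a Borel, hence Lebesgue measurable, function. This settles \eqref{eq:sec2-measurable-continuous-integrand}. The only point requiring care here is the joint measurability, and joint continuity resolves it.

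For a finite discrete measure $\sum_i c_i\delta_{V_i}$ the integrand becomes
\[
\prod_i F(V_i,P_{V_i}x)^{c_i}.
\]
This is a finite product of continuous positive functions of $x$, by the continuity of $x\mapsto(V_i,P_{V_i}x)$ into $T_{n,k}$, and so it is continuous. No step is a real obstacle. The main thing to watch is that in the continuous case the value $-\infty$ may occur, which is why the argument goes through $L^\pm$ rather than asserting continuity.
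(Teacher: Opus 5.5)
Your proposal is correct and follows essentially the same route as the paper: joint continuity of $(V,x)\mapsto\log F(V,P_Vx)$, splitting into positive and negative parts, Tonelli's theorem, and exponentiation, with the discrete case reduced to a finite product of continuous functions of $x$. One small correction to your closing remark: the value $-\infty$ cannot occur here, because for fixed $x$ the map $V\mapsto\log F(V,P_Vx)$ is continuous on the compact space $G(n,k)$ and hence bounded, so both $\int L^{\pm}\,d\mu$ are finite; this is how the paper argues, without needing the majorant $\mathcal F$.
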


\begin{proof}
    The map
	\[
	    (V,x)\longmapsto (V,P_Vx)
	\]
    is continuous from $G(n,k)\times\R^n$ to $T_{n,k}$.
    Hence
	\[
	    H(V,x):=\log F(V,P_Vx)
	\]
    is jointly continuous and therefore jointly Borel measurable.
    We denote
	\[
	    H^+(V,x)
	    :=
	    \max
	    \{
	        H(V,x),0
	    \}
	    \qquad
	    \text{and}
	    \qquad
	    H^-(V,x)
	    :=
	    \max
	    \{
	        -H(V,x),0
	    \}.
	\]
    Then $H^+$ and $H^-$ are nonnegative and measurable, so Tonelli's theorem shows that
	\[
	    x\longmapsto\int H^+(V,x)\,d\mu(V),
	    \qquad
	    x\longmapsto\int H^-(V,x)\,d\mu(V)
	\]
    are Lebesgue measurable.
    For every fixed $x$, the function $H(\cdot,x)$ is continuous on the compact Grassmannian and is therefore bounded.
    Since $\mu$ is finite by \eqref{eq:sec2-grassmannian-total-mass}, both integrals are finite.
    Their difference is consequently well defined, with no $\infty-\infty$ ambiguity, and exponentiation proves \eqref{eq:sec2-measurable-continuous-integrand}.
    For a finite discrete measure, the exponent is a finite sum of continuous functions of $x$.
\end{proof}

Lemmas~\ref{lem:sec2-projected-input-continuity}, \ref{lem:sec2-continuity-fiber-integral}, and \ref{lem:sec2-measurability-continuous-integrand} establish the three claims invoked in Subsection~\ref{subsec:sec2-formal-continuous-passage} and therefore complete the proof of Theorem~\ref{thm:sec1-forward-grassmannian-bl}.

\subsection{Compactly Supported Input Families}
\label{subsec:sec2-compact-input-families}

We now extend the Grassmannian inequality to compactly supported fiber inputs.
Throughout this subsection, $B_V(0,r)$ denotes the closed ball $\{y\in V:|y|\le r\}$.

\begin{proof}[Proof of Theorem~\ref{thm:sec1-compact-grassmannian-bl}]
Suppose that $(f_V)_{V\in G(n,k)}$ satisfies the hypotheses of Theorem~\ref{thm:sec1-compact-grassmannian-bl}.
We first define a family of cutoff functions.
For each $\delta>0$, define $\psi_\delta:[0,\infty)\times[0,\infty)\to[0,1]$ by
	\[
	    \psi_\delta(r,a)
	    =
	    \begin{cases}
	        1, & r\le a,\\
	        0, & r\ge a+\delta,\\
	        1-\dfrac{r-a}{\delta}, & a\le r\le a+\delta.
	    \end{cases}
	\]
We then define $\varphi_\delta:T_{n,k}\to[0,1]$ by
	\[
	    \varphi_\delta(V,y)
	    =
	    \psi_\delta\bigl(\alpha(V),|y|\bigr).
	\]
We also define a family of regularization functions.
For each $\eta>0$, define $\rho_\eta:T_{n,k}\to(0,\infty)$ by
	\[
	    \rho_\eta(V,y)
	    =
	    \eta e^{-|y|^2}.
	\]
For $\delta,\eta>0$, consider the input function
	\[
	    F_{\delta,\eta}(V,y)
	    :=
	    \varphi_\delta(V,y)F(V,y)
	    +
	    \rho_\eta(V,y).
	\]
For fixed $\delta,\eta>0$, the function $F_{\delta,\eta}$ is continuous and strictly positive on $T_{n,k}$.
For $\delta<1$ and all sufficiently small $\eta>0$, we have
	\[
	    0
	    <
	    F_{\delta,\eta}(V,y)
	    \le
	    \psi_1\bigl(\alpha_M,|y|\bigr)
	    \mathcal F(|y|)
	    +
	    e^{-|y|^2}.
	\]
Here,
	\[
	    \alpha_M
	    :=
	    \sup_{V\in G(n,k)}\alpha(V),
	\]
which is finite because $\alpha$ is continuous on the compact space $G(n,k)$.
The function on the right-hand side is an admissible dominating function.
Thus, $F_{\delta,\eta}$ satisfies all the hypotheses of Theorem~\ref{thm:sec1-forward-grassmannian-bl}.
Applying this theorem, we obtain, for each fixed $\delta,\eta>0$,
	\[
	    \int_{\R^n}
	        \exp\left(
	            \int_{G(n,k)}
	                \log F_{\delta,\eta}(V,P_Vx)
	            \,d\mu(V)
	        \right)
	    \,dx
	    \le
	    \exp\left(
	        \int_{G(n,k)}
	            \log\left(
	                \int_V
	                    F_{\delta,\eta}(V,y)
	                \,d\lambda_V(y)
	            \right)
	        \,d\mu(V)
	    \right).
	\]

We first let $\eta\to0$.
For every $(V,y)\in T_{n,k}$, we have
	\begin{align*}
	    \lim_{\eta\to0}
	        F_{\delta,\eta}(V,y)
	    &=
	    \lim_{\eta\to0}
	        \left(
	            \varphi_\delta(V,y)F(V,y)
	            +
	            \eta e^{-|y|^2}
	        \right)\\
	    &=
	    \varphi_\delta(V,y)F(V,y)
	    =:
	    F_{\delta,0}(V,y).
	\end{align*}
Thus, for each fixed $x\in\R^n$, we have the decreasing limit
	\[
	    \lim_{\eta\to0}
	        \exp\left(
	            \int_{G(n,k)}
	                \log F_{\delta,\eta}(V,P_Vx)
	            \,d\mu(V)
	        \right)
	    =
	    \exp\left(
	        \int_{G(n,k)}
	            \log F_{\delta,0}(V,P_Vx)
	        \,d\mu(V)
	    \right)
	    \ge0.
	\]
Here, the inner integral is allowed to equal $-\infty$.
The monotone convergence theorem therefore gives
	\begin{align*}
	    &\lim_{\eta\to0}
	        \int_{\R^n}
	            \exp\left(
	                \int_{G(n,k)}
	                    \log F_{\delta,\eta}(V,P_Vx)
	                \,d\mu(V)
	            \right)
	        \,dx\\
	    =\,&
	    \int_{\R^n}
	        \exp\left(
	            \int_{G(n,k)}
	                \log F_{\delta,0}(V,P_Vx)
	            \,d\mu(V)
	        \right)
	    \,dx.
	\end{align*}

For the right-hand side, we need a uniform $\mu$-integrable bound for
	\[
	    \left|
	        \log\left(
	            \int_V
	                F_{\delta,\eta}(V,y)
	            \,d\lambda_V(y)
	        \right)
	    \right|.
	\]
Since $\mu$ is finite, it is enough to find a finite uniform bound.
Since $\alpha$ is positive and continuous on a compact space, there exists $\alpha_m>0$ such that
	\[
	    \alpha(V)\ge\alpha_m
	\]
for every $V\in G(n,k)$.
By construction,
	\[
	    \int_V
	        F_{\delta,\eta}(V,y)
	    \,d\lambda_V(y)
	    \ge
	    \int_V
	        F_{\delta,0}(V,y)
	    \,d\lambda_V(y)
	    \ge
	    \int_{B_V(0,\alpha_m)}
	        F(V,y)
	    \,d\lambda_V(y).
	\]
The set
	\[
	    K
	    :=
	    \{(V,y)\in T_{n,k}:|y|\le\alpha_m\}
	\]
is a compact subset of $T_{n,k}$.
Since $F$ is continuous and strictly positive, there exists $F_m>0$ such that
	\[
	    F(V,y)\ge F_m
	\]
for every $(V,y)\in K$.
Thus,
	\[
	    \int_{B_V(0,\alpha_m)}
	        F(V,y)
	    \,d\lambda_V(y)
	    \ge
	    F_m\lambda_V\bigl(B_V(0,\alpha_m)\bigr)
	    =
	    F_m\lambda_{\R^k}\bigl(B_{\R^k}(0,\alpha_m)\bigr)
	    >
	    0.
	\]
Therefore, for every $\eta>0$, we have the uniform lower bound
	\begin{equation}
	    \log\left(
	        \int_V
	            F_{\delta,\eta}(V,y)
	        \,d\lambda_V(y)
	    \right)
	    \ge
	    \log\left(
	        F_m\lambda_V\bigl(B(0,\alpha_m)\bigr)
	    \right)
	    >
	    -\infty.
	    \label{eq:sec2-lb-lhs-cpt-input}
	\end{equation}
For the upper bound, for all sufficiently small $\delta,\eta>0$, we have
	\begin{align*}
	    \int_V
	        F_{\delta,\eta}(V,y)
	    \,d\lambda_V(y)
	    &\le
	    \int_V
	        \psi_1\bigl(\alpha_M,|y|\bigr)
	        \mathcal F(|y|)
	    \,d\lambda_V(y)
	    +
	    \int_V
	        e^{-|y|^2}
	    \,d\lambda_V(y)\\
	    &\le
	    \int_0^\infty
	        r^{k-1}
	        \left(
	            \psi_1(\alpha_M,r)\mathcal F(r)
	            +
	            e^{-r^2}
	        \right)
	    \,dr
	    <
	    \infty.
	\end{align*}
The last integral is finite and independent of $V$.
The dominated convergence theorem now gives
	\[
	    \lim_{\eta\to0}
	        \exp\left(
	            \int_{G(n,k)}
	                \log\left(
	                    \int_V
	                        F_{\delta,\eta}(V,y)
	                    \,d\lambda_V(y)
	                \right)
	            \,d\mu(V)
	        \right)
	    =
	    \exp\left(
	        \int_{G(n,k)}
	            \log\left(
	                \int_V
	                    F_{\delta,0}(V,y)
	                \,d\lambda_V(y)
	            \right)
	        \,d\mu(V)
	    \right).
	\]
It follows that
	\[
	    \int_{\R^n}
	        \exp\left(
	            \int_{G(n,k)}
	                \log F_{\delta,0}(V,P_Vx)
	            \,d\mu(V)
	        \right)
	    \,dx
	    \le
	    \exp\left(
	        \int_{G(n,k)}
	            \log\left(
	                \int_V
	                    F_{\delta,0}(V,y)
	                \,d\lambda_V(y)
	            \right)
	        \,d\mu(V)
	    \right).
	\]

Finally, we let $\delta\to0$.
As above, applying the monotone convergence theorem to the left-hand side gives
	\begin{align*}
	    &\lim_{\delta\to0}
	        \int_{\R^n}
	            \exp\left(
	                \int_{G(n,k)}
	                    \log F_{\delta,0}(V,P_Vx)
	                \,d\mu(V)
	            \right)
	        \,dx\\
	    =\,&
	    \int_{\R^n}
	        \exp\left(
	            \int_{G(n,k)}
	                \log\mathbf 1_{B(0,\alpha(V))}
	                F(V,P_Vx)
	            \,d\mu(V)
	        \right)
	    \,dx.
	\end{align*}
A similar argument for the right-hand side gives, for all sufficiently small $\delta>0$,
	\begin{align*}
	    \infty
	    >
	    \int_0^\infty
	        r^{k-1}
	        \psi_1(\alpha_M,r)
	        \mathcal F(r)
	    \,dr
	    \ge
	    \int_V
	        F_{\delta,0}(V,y)
	    \,d\lambda_V(y)
	    \ge
	    F_m
	    \lambda_{\R^k}\bigl(B_{\R^k}(0,\alpha_m)\bigr)
	    >
	    0.
	\end{align*}
The dominated convergence theorem therefore gives
	\begin{align*}
	    &\int_{\R^n}
	        \exp\left(
	            \int_{G(n,k)}
	                \log\mathbf 1_{B_V(0,\alpha(V))}
	                F(V,P_Vx)
	            \,d\mu(V)
	        \right)
	    \,dx\\
	    \le\,&
	    \exp\left(
	        \int_{G(n,k)}
	            \log\left(
	                \int_V
	                    \mathbf 1_{B_V(0,\alpha(V))}
	                    F(V,y)
	                \,d\lambda_V(y)
	            \right)
	        \,d\mu(V)
	    \right).
	\end{align*}
This proves Theorem~\ref{thm:sec1-compact-grassmannian-bl}.
\end{proof}

\section{A Heat-Semigroup Proof of the Continuous Grassmannian Brascamp--Lieb Inequality}
\label{sec:sec3-heat-semigroup-proof}

Our second proof is inspired by the heat-semigroup preservation argument of Barthe and Huet~\cite{BartheHuet2009}.
For a finite rank-one geometric datum, their argument starts from a comparison of the form $h(x)\le\prod_i f_i(\inner{x}{u_i})^{c_i}$.
A maximum-principle argument shows that the corresponding comparison $(e^{t\Delta}h)(x)\le\prod_i(e^{t\Delta}f_i)(\inner{x}{u_i})^{c_i}$ is preserved for every positive time.
The large-time asymptotics of the heat kernels then give the geometric Brascamp--Lieb inequality.
Heat-flow proofs of geometric Brascamp--Lieb inequalities also appear in the work of Carlen, Lieb, and Loss~\cite{CLL2004} and Bennett, Carbery, Christ, and Tao~\cite{BCCT}.

We extend the argument of Barthe and Huet from a finite geometric datum to an isotropic measure on $G(n,k)$.
The resulting statement is Theorem~\ref{thm:sec1-heat-semigroup-preservation}.
We first compute explicitly how the ambient heat semigroup acts on an input lifted from a fiber.
We write $P_{V^\perp}:=I_n-P_V$ for the orthogonal projection onto $V^\perp$.
For $V\in G(n,k)$ and a nonnegative Lebesgue measurable function $f_V$ on $V$, the orthogonal decomposition $\R^n=V\oplus V^\perp$, Pythagoras' theorem, and the Fubini--Tonelli theorem give
	\begin{align}
	    [e^{t\Delta}(f_V\circ P_V)](x)
	    &=(4\pi t)^{-n/2}
	    \int_{\R^n}
	        e^{-\frac{|x-z|^2}{4t}}f_V(P_Vz)
	    \,d\lambda_{\R^n}(z)\notag\\
	    &=(4\pi t)^{-n/2}
	    \int_V\int_{V^\perp}
	        e^{-\frac{|P_Vx-y|^2}{4t}}
	        e^{-\frac{|P_{V^\perp}x-w|^2}{4t}}
	        f_V(y)
	    \,d\lambda_{V^\perp}(w)\,d\lambda_V(y)\notag\\
	    &=(4\pi t)^{-k/2}
	    \int_V
	        e^{-\frac{|P_Vx-y|^2}{4t}}f_V(y)
	    \,d\lambda_V(y),
	    \label{eq:sec3-ambient-to-fiber-heat-flow}
	\end{align}
where the last equality follows from integrating the heat kernel
	\begin{equation}
	    \int_{V^\perp}
	        e^{-\frac{|P_{V^\perp}x-w|^2}{4t}}
	    \,d\lambda_{V^\perp}(w)
	    =
	    (4\pi t)^{(n-k)/2}.
	    \label{eq:sec3-orthogonal-gaussian-integral}
	\end{equation}
Thus the ambient heat flow of $f_V\circ P_V$ is constant along $V^\perp$ and equals the $k$-dimensional heat evolution of $f_V$ evaluated at $P_Vx$.
For consistency, every occurrence of $e^{t\Delta}$ below denotes the heat semigroup on $\R^n$.
In particular, when it is applied to $f_V\circ P_V$, we use \eqref{eq:sec3-ambient-to-fiber-heat-flow} instead of introducing separate notation for a heat semigroup on $V$.

In Subsection~\ref{subsec:sec3-preservation-to-inequality}, we show how Theorem~\ref{thm:sec1-heat-semigroup-preservation} implies the Grassmannian Brascamp--Lieb inequality.
We prove Theorem~\ref{thm:sec1-heat-semigroup-preservation} in Subsection~\ref{subsec:sec3-formal-proof}.
Finally, in Subsection~\ref{subsec:sec3-technical-details}, we justify the technical steps used in the proof, including measurability and the interchange of differentiation and integration, etc.

We must mention that Barthe and Huet also used this method to prove the reverse geometric Brascamp--Lieb inequality.
For a continuous isotropic measure on the Grassmannian, however, the corresponding argument would require us to extend the maximum-principle lemma from Euclidean spaces to an appropriate infinite-dimensional Banach space.
To the best of the author's knowledge, the lack of compactness in this setting means that such an extension would require additional assumptions.
For this reason, we do not discuss the reverse inequality in this paper.

\subsection{From preservation to the continuous inequality}
\label{subsec:sec3-preservation-to-inequality}

Assume for the moment that Theorem~\ref{thm:sec1-heat-semigroup-preservation} has been proved.
Let $(f_V)_{V\in G(n,k)}$ be either the strictly positive family $f_V(y)=F(V,y)$ from Theorem~\ref{thm:sec1-forward-grassmannian-bl} or the compactly supported family from Theorem~\ref{thm:sec1-compact-grassmannian-bl}.
Set
	\begin{equation}
	    g(x)
	    :=
	    \exp\left(
	        \int_{G(n,k)}
	            \log f_V(P_Vx)
	        \,d\mu(V)
	    \right).
	    \label{eq:sec3-target-function}
	\end{equation}
For $j\in\N$, define the bounded compactly supported function
	\begin{equation}
	    h_j(x)
	    :=
	    \min\{g(x),j\}\mathbf 1_{B_{\R^n}(0,j)}(x).
	    \label{eq:sec3-target-truncation}
	\end{equation}
Then $0\le h_j\le g$, and hence the initial comparison \eqref{eq:sec1-heat-semigroup-initial} holds with $h=h_j$.
Applying Theorem~\ref{thm:sec1-heat-semigroup-preservation} at $x=0$ and using \eqref{eq:sec3-ambient-to-fiber-heat-flow}, we obtain
	\begin{equation}
	   \label{eq:sec3-preservation-at-origin}
	    (4\pi t)^{-n/2}
	    \int_{\R^n}
	        e^{-\frac{|z|^2}{4t}}h_j(z)
	    \,dz
	    \le
	    \exp\left(
	        \int_{G(n,k)}
	            \log\left(
	                (4\pi t)^{-k/2}
	                \int_V
	                    e^{-\frac{|y|^2}{4t}}f_V(y)
	                \,d\lambda_V(y)
	            \right)
	        \,d\mu(V)
	    \right).
	\end{equation}
Taking traces in the isotropy identity \eqref{eq:sec1-grassmannian-isotropy} gives $\mu(G(n,k))=n/k$.
The factors $(4\pi t)^{-n/2}$ on the two sides of \eqref{eq:sec3-preservation-at-origin} therefore cancel, and we find
	\begin{equation}
	   \label{eq:sec3-cancelled-heat-normalization}
	    \int_{\R^n}
	        e^{-\frac{|z|^2}{4t}}h_j(z)
	    \,dz
	    \le
	    \exp\left(
	        \int_{G(n,k)}
	            \log\left(
	                \int_V
	                    e^{-\frac{|y|^2}{4t}}f_V(y)
	                \,d\lambda_V(y)
	            \right)
	        \,d\mu(V)
	    \right).
	\end{equation}

We next let $t\to\infty$.
For every $V\in G(n,k)$, by dominated convergence,
	\[
	    \int_V
	        e^{-\frac{|y|^2}{4t}}f_V(y)
	    \,d\lambda_V(y)
	    \longrightarrow
	    \int_Vf_V(y)\,d\lambda_V(y)
	\]
This convergence also passes through the logarithmic $\mu$-integral.
Indeed, the dominating function $\mathcal{F}$ gives a finite uniform upper bound, while the positivity of $F$ on a fixed ball in every fiber gives a strictly positive uniform lower bound for all $t\ge1$.
The details are in Lemma~\ref{lem:sec3-large-time-fiber-limit}.
Since $h_j$ is bounded and compactly supported, dominated convergence on $\R^n$ now gives
	\begin{equation}
	    \int_{\R^n}h_j(z)\,dz
	    \le
	    \exp\left(
	        \int_{G(n,k)}
	            \log\left(
	                \int_Vf_V(y)\,d\lambda_V(y)
	            \right)
	        \,d\mu(V)
	    \right).
	    \label{eq:sec3-truncated-grassmannian-bl}
	\end{equation}
The sequence $(h_j)$ increases pointwise to $g$.
The monotone convergence theorem therefore allows $j\to\infty$ in \eqref{eq:sec3-truncated-grassmannian-bl}.
For $f_V(\cdot)=F(V,\cdot)$, this proves Theorem~\ref{thm:sec1-forward-grassmannian-bl}.
For the compactly supported family, it proves Theorem~\ref{thm:sec1-compact-grassmannian-bl}.

\subsection{Formal proof of heat-semigroup preservation}
\label{subsec:sec3-formal-proof}

We now explain the proof of Theorem~\ref{thm:sec1-heat-semigroup-preservation}.
We use Lemma~\ref{lem:sec1-barthe-huet-zhang-maximum-principle}.
We first consider the compactly supported family $(f_V)_{V\in G(n,k)}$.
If $h=0$ almost everywhere, the conclusion is immediate.
Otherwise, $e^{t\Delta}h$ is strictly positive for every positive time $t>0$.
For $(t,x)\in(0,\infty)\times\R^n$, set
	\begin{equation}
	    H(t,x)
	    :=
	    \log[e^{t\Delta}h](x),
	    \qquad
	    F_V(t,x)
	    :=
	    \log[e^{t\Delta}(f_V\circ P_V)](x),
	    \label{eq:sec3-logarithmic-heat-flows}
	\end{equation}
and define
	\begin{equation}
	    C(t,x)
	    :=
	    \int_{G(n,k)}
	        F_V(t,x)
	    \,d\mu(V)
	    -
	    H(t,x).
	    \label{eq:sec3-comparison-function}
	\end{equation}
The desired preservation inequality~\eqref{eq:sec1-heat-semigroup-positive-time} is equivalent to $C(t,x)\ge0$ for all $x\in \R^n$ and $t > 0$.

Both $e^H$ and $e^{F_V}$ solve the heat equation on $\R^n$.
Consequently, we have the following evolution identities
	\begin{equation}
	    \partial_tH
	    =
	    \Delta_{\R^n}H+|\nabla_{\R^n}H|^2,
	    \qquad
	    \partial_tF_V
	    =
	    \Delta_{\R^n}F_V+|\nabla_{\R^n}F_V|^2.
	    \label{eq:sec3-logarithmic-heat-evolution}
	\end{equation}
We differentiate $C$ against $t$ and apply~\eqref{eq:sec3-logarithmic-heat-evolution}, then we obtain
	\begin{equation}
	   \label{eq:sec3-comparison-evolution}
	    \partial_tC
	    =
	    \left(
	        \int_{G(n,k)}
	            \Delta_{\R^n}F_V
	        \,d\mu(V)
	        -
	        \Delta_{\R^n}H
	    \right)
	    +
	    \left(
	        \int_{G(n,k)}
	            |\nabla_{\R^n}F_V|^2
	        \,d\mu(V)
	        -
	        |\nabla_{\R^n}H|^2
	    \right).
	\end{equation}

Suppose that $C$ satisfies the critical-point conditions in \eqref{eq:sec1-minimum-point-conditions} at $(t,x)$.
In particular, $\nabla_{\R^n}C(t,x)=0$, and hence
	\begin{equation}
	    \nabla_{\R^n}H(t,x)
	    =
	    \int_{G(n,k)}
	        \nabla_{\R^n}F_V(t,x)
	    \,d\mu(V).
	    \label{eq:sec3-critical-gradient-identity}
	\end{equation}
The condition $\Hess_{\R^n}C(t,x)\succeq0$ and the interchange of the Hessian with the $\mu$-integral imply
	\begin{equation}
	    \int_{G(n,k)}
	        \Delta_{\R^n}F_V(t,x)
	    \,d\mu(V)
	    -
	    \Delta_{\R^n}H(t,x)
	    =
	    \Delta_{\R^n}C(t,x)
	    \ge0.
	    \label{eq:sec3-critical-laplacian-inequality}
	\end{equation}

Since $[e^{t\Delta}(f_V\circ P_V)](x)$ depends only on $P_Vx$, we have $\nabla_{\R^n}F_V(t,x)\in V$.
Applying Lemma~\ref{lem:sec3-isotropic-section-estimate} and then using \eqref{eq:sec3-critical-gradient-identity}, we obtain
	\begin{align}
	    |\nabla_{\R^n}H(t,x)|^2
	    &=
	    \left|
	        \int_{G(n,k)}
	            \nabla_{\R^n}F_V(t,x)
	        \,d\mu(V)
	    \right|^2\notag\\
	    &\le
	    \int_{G(n,k)}
	        |\nabla_{\R^n}F_V(t,x)|^2
	    \,d\mu(V).
	    \label{eq:sec3-critical-gradient-inequality}
	\end{align}
Both terms on the right-hand side of \eqref{eq:sec3-comparison-evolution} are therefore nonnegative, and hence $\partial_tC(t,x)\ge0$ at every point satisfying \eqref{eq:sec1-minimum-point-conditions}.

To obtain the boundary condition at spatial infinity, we regularize the fiber heat flows.
For each $\delta>0$, define
	\[
	    f_V^\delta
	    :=
	    f_V+\delta
        \quad
        \text{and}
        \quad
	    F_V^\delta(t,x)
	    :=
	    \log[e^{t\Delta}(f_V^\delta\circ P_V)](x).
	\]
We also define
	\[
	    C^\delta(t,x)
	    :=
	    \int_{G(n,k)}
	        F_V^\delta(t,x)
	    \,d\mu(V)
	    -
	    H(t,x).
	\]
The argument above also applies to $C^\delta$.
Therefore, $C^\delta$ satisfies the first hypothesis in Lemma~\ref{lem:sec1-barthe-huet-zhang-maximum-principle}.

We now verify the second hypothesis.
Since the heat semigroup preserves constant functions, we have
	\begin{align*}
	    F_V^\delta(t,x)
	    &=
	    \log[e^{t\Delta}(f_V^\delta\circ P_V)](x)\\
	    &=
	    \log\left(
	        [e^{t\Delta}(f_V\circ P_V)](x)
	        +
	        [e^{t\Delta}\delta](x)
	    \right)\\
	    &=
	    \log\left(
	        [e^{t\Delta}(f_V\circ P_V)](x)
	        +
	        \delta
	    \right).
	\end{align*}
Since
	\(
	    [e^{t\Delta}(f_V\circ P_V)](x)\ge0,
	\)
we obtain
	\[
	    F_V^\delta(t,x)
	    \ge
	    \log\delta
	    >
	    -\infty.
	\]
Moreover, the bound $f_V(y)\le\mathcal F(|y|)$ gives
	\begin{align*}
	    0
	    &\le
	    [e^{t\Delta}(f_V\circ P_V)](x)\\
	    &=
	    (4\pi t)^{-k/2}
	    \int_V
	        \exp\left(
	            -\frac{|P_Vx-y|^2}{4t}
	        \right)
	        f_V(y)
	    \,d\lambda_V(y)\\
	    &\le
	    \|\mathcal F\|_{L^\infty([0,\infty))}.
	\end{align*}
Thus,
	\(
	    -\infty
	    <
	    \log\delta
	    \le
	    F_V^\delta(t,x)
	    \le
	    \log\left(
	        \|\mathcal F\|_{L^\infty([0,\infty))}
	        +
	        \delta
	    \right)
	    <
	    \infty.
	\)
After integrating with respect to $\mu$, we obtain
	\begin{align*}
	    \mu(G(n,k))\log\delta
	    \le
	    \int_{G(n,k)}
	        F_V^\delta(t,x)
	    \,d\mu(V)
	    \le
	    \mu(G(n,k))
	    \log\left(
	        \|\mathcal F\|_{L^\infty([0,\infty))}
	        +
	        \delta
	    \right).
	\end{align*}
On the other hand, $h$ is nonnegative, bounded, and compactly supported.
It follows that
	\[
	    \lim_{|x|\to\infty}
	        \sup_{t\in(0,\infty)}
	            [e^{t\Delta}h](x)
	    =
	    0.
	\]
Therefore, for every $M>0$, there exists $R_M>0$ such that
	\[
	    [e^{t\Delta}h](x)
	    \le
	    e^{-M}
	\]
whenever $|x|\ge R_M$ and $t>0$.
Consequently,
	\[
	    -H(t,x)
	    =
	    -\log[e^{t\Delta}h](x)
	    \ge
	    -\log(e^{-M})
	    =
	    M
	\]
for every $t>0$ and every $|x|\ge R_M$.
Choose $M$ sufficiently large that
	\[
	    \mu(G(n,k))\log\delta+M
	    \ge
	    0.
	\]
Then, for every $t>0$ and every $|x|\ge R_M$, we have
	\begin{align*}
	    C^\delta(t,x)
	    =
	    \int_{G(n,k)}
	        F_V^\delta(t,x)
	    \,d\mu(V)
	    -
	    H(t,x)
	    \ge
	    0.
	\end{align*}
In particular,
	\[
	    \liminf_{|x|\to\infty}
	        \left(
	            \inf_{t>0}C^\delta(t,x)
	        \right)
	    \ge
	    0.
	\]

The third hypothesis for $C^\delta$ follows from assumption~\eqref{eq:sec1-heat-semigroup-initial} and the inequality $f_V^\delta\ge f_V$.
Lemma~\ref{lem:sec1-barthe-huet-zhang-maximum-principle} therefore gives
	\[
	    C^\delta(t,x)
	    \ge
	    0
	\]
for every $(t,x)\in(0,\infty)\times\R^n$.
Finally, letting $\delta\downarrow0$ gives $C(t,x)\ge0$.
This completes the formal proof for the case of compactly supported input.

Next we assume Theorem~\ref{thm:sec1-heat-semigroup-preservation} is true for the case of $\alpha$ being finite and continuous.
We prove the case of $\alpha\equiv +\infty$.
Suppose that $f_V(y)=F(V,y)$.
Since $h$ is compactly supported, we can choose $R>0$ such that $h(x)=0$ whenever $|x|>R$.
For each $V\in G(n,k)$, define
	\[
	    f_V^R(y)
	    :=
	    \mathbf 1_{B_V(0,R)}(y)f_V(y).
	\]
This family satisfies the compact-support assumptions with the constant radius function $\alpha(V)=R$.

We first check the initial comparison for the truncated inputs.
If $|x|\le R$, then $|P_Vx|\le|x|\le R$ for every $V\in G(n,k)$.
Hence,
	\[
	    f_V^R(P_Vx)=f_V(P_Vx).
	\]
The initial comparison therefore follows from~\eqref{eq:sec1-heat-semigroup-initial}.
If $|x|>R$, it holds because $h(x)=0$ and the right-hand side is nonnegative.
Thus, for every $x\in\R^n$,
	\[
	    h(x)
	    \le
	    \exp\left(
	        \int_{G(n,k)}
	            \log f_V^R(P_Vx)
	        \,d\mu(V)
	    \right).
	\]
We may therefore apply the compactly supported case of the theorem.

Since $0\le f_V^R\le f_V$ and the heat semigroup preserves order, we have
	\[
	    0
	    <
	    [e^{t\Delta}(f_V^R\circ P_V)](x)
	    \le
	    [e^{t\Delta}(f_V\circ P_V)](x)
	\]
for every $V\in G(n,k)$, $t>0$, and $x\in\R^n$.
Taking logarithms and integrating with respect to $\mu$, we obtain
	\begin{align*}
	    (e^{t\Delta}h)(x)
	    &\le
	    \exp\left(
	        \int_{G(n,k)}
	            \log\left(
	                [e^{t\Delta}(f_V^R\circ P_V)](x)
	            \right)
	        \,d\mu(V)
	    \right)\\
	    &\le
	    \exp\left(
	        \int_{G(n,k)}
	            \log\left(
	                [e^{t\Delta}(f_V\circ P_V)](x)
	            \right)
	        \,d\mu(V)
	    \right).
	\end{align*}
This proves~\eqref{eq:sec1-heat-semigroup-positive-time} for the strictly positive input family.

\subsection{Technical details}
\label{subsec:sec3-technical-details}

We now justify the analytic steps used in the formal proof.
First, for each $(t,x)\in(0,\infty)\times\R^n$, we prove that the map
	\[
	    V
	    \longmapsto
	    [e^{t\Delta}(f_V\circ P_V)](x)
	\]
is $\mu$-measurable.
This map appears on the right-hand side of~\eqref{eq:sec1-heat-semigroup-positive-time}.
We then prove that $C(t,x)$ is once differentiable in $t$ and twice differentiable in $x$.
The functions $H$ and $F_V$, for each fixed $V$, already belong to $C_t^1$ and $C_x^2$.
It therefore remains to justify interchanging $\partial_t$, $\nabla_x$, $\Delta_x$, and $\operatorname{Hess}_x$ with the integral
	\[
	    \int_{G(n,k)}F_V(t,x)\,d\mu(V).
	\]
We also prove~\eqref{eq:sec3-critical-gradient-inequality} in a separate lemma.
For later reference, set
	\begin{equation}
	    u_V(t,x)
	    :=
	    [e^{t\Delta}(F(V,\cdot)\circ P_V)](x).
	    \label{eq:sec3-fiber-heat-solution}
	\end{equation}
In this subsection, we only consider the compactly supported input case.

\begin{lemma}\label{lem:sec3-measurabilities of mu-integrands}
    The maps $V\mapsto [e^{t\Delta_{\R^n}}(f_V\circ P_V)] (x)$ is $\mu$-measurable for any $t > 0$ and $x\in \R^n$.
\end{lemma}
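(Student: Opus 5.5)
The plan is to realize the map as a $V$-integral of a jointly Borel measurable function on the tautological bundle and then invoke Fubini--Tonelli on a product space. The fiber formula \eqref{eq:sec3-ambient-to-fiber-heat-flow} integrates over the moving domain $V$, so I first rewrite it as an integral over the fixed space $\R^n$, using the first line of that computation:
\[
[e^{t\Delta}(f_V\circ P_V)](x)
=
(4\pi t)^{-n/2}\int_{\R^n}e^{-\frac{|x-z|^2}{4t}}\,f_V(P_Vz)\,dz .
\]
With this form the domain no longer depends on $V$, so it suffices to show that
\[
\Phi(V,z):=e^{-\frac{|x-z|^2}{4t}}\,f_V(P_Vz)
\]
is nonnegative and Borel measurable on $G(n,k)\times\R^n$. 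Tonelli's theorem, applied to the finite measure $\mu$ and Lebesgue measure, then shows that $V\mapsto\int_{\R^n}\Phi(V,z)\,dz$ is Borel, hence $\mu$-measurable. The value $+\infty$ does not arise, because $f_V\le\mathcal F(|\cdot|)\le\|\mathcal F\|_\infty$.

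The key step is the joint measurability of $(V,z)\mapsto f_V(P_Vz)=\mathbf 1_{B_V(0,\alpha(V))}(P_Vz)\,F(V,P_Vz)$. As in the proof of Lemma~\ref{lem:sec2-measurability-continuous-integrand}, the map $(V,z)\mapsto(V,P_Vz)$ is continuous from $G(n,k)\times\R^n$ into $T_{n,k}$, and composing with the continuous $F$ gives a continuous factor. The indicator factor equals $\mathbf 1_{\{|P_Vz|\le\alpha(V)\}}$. This is the indicator of a closed set, since $(V,z)\mapsto\alpha(V)-|P_Vz|$ is continuous. The Gaussian factor is continuous. The product $\Phi$ is therefore Borel.

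For the strictly positive family ($\alpha\equiv\infty$) the indicator is absent and the same argument applies. In that case I could alternatively prove continuity directly, by the isometry transport of Lemma~\ref{lem:sec2-nearby-subspace-isometry} together with dominated convergence, but measurability is all that is required here.

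I expect the only genuine obstacle to be the varying domain of integration. Passing to the ambient integral over $\R^n$ removes it. The one remaining point is that the completion issue is harmless: Borel sets are $\mu$-measurable, so no further argument is needed there.
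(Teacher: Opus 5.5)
Your proposal is correct and follows the paper's proof essentially step for step: you rewrite the fiber heat flow as the ambient Gaussian integral over $\R^n$, get Borel measurability of $(V,z)\mapsto f_V(P_Vz)$ by composing the continuous map $(V,z)\mapsto(V,P_Vz)$ into $T_{n,k}$ with a Borel function there, and then apply Tonelli. The one addition is that you explicitly justify why the truncated input is Borel (the indicator of a closed set times the continuous $F$), which the paper simply asserts.
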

\begin{proof}
    Since $(V,z)\mapsto (V,P_Vz)$ is continuous from $G(n,k)\times \R^n$ to $T_{n,k}$, and since $(V,y)\mapsto f_V(y)$ is Borel on $T_{n,k}$,
    We then have
        \[
            (V,z)\mapsto f_V(P_Vz)
        \]
    is Borel.
    Therefore,
        \[
            (V,z)
            \mapsto
            (4\pi t)^{-n/2}
            \exp
                \left(
                    -\frac{|x-z|^2}{4t}
                \right)
            f_V(P_V z)
        \]
    is nonnegative and Borel.
    Then, by the Fubini--Tonelli theorem, the map
        \[
            V\mapsto
            \int_{\R^n}
                (4\pi t)^{-n/2}
                \exp
                    \left(
                        -\frac{|x-z|^2}{4t}
                    \right)
                f_V(P_V z)
            \,d\lambda_{\R^n}(z)
            =
            [e^{t\Delta_{\R^n}}(f_V\circ P_V)] (x)
        \]
    is $\mu$-measurable.
\end{proof}

The next lemmas show that we can differentiate the integral $\int_{G(n,k)} F_V(t,x)\,d\mu(V)$.

\begin{lemma}\label{lem:sec3-swipe time derivative and mu-integral}
    We fix $x\in \R^n$ and $t > 0$. For the function $F_V(t,x)$ defined above, we have
        \[
            \partial_t
                \int_{G(n,k)}
                    F_V(t,x)
                \,d\mu(V)
            =
            \int_{G(n,k)}
                \partial_t
                    F_V(t,x)
            \,d\mu(V).
        \]
\end{lemma}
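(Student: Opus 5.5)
The plan is to justify differentiation under the $\mu$-integral by dominated convergence: take difference quotients in $t$ and apply the mean value theorem. Since $\mu$ is finite, it suffices to show that $\partial_t F_V(s,x)$ is bounded uniformly in $V\in G(n,k)$ and in $s$ ranging over a compact interval $[t/2,2t]$. Because we are in the compactly supported case, $f_V=\mathbf 1_{B_V(0,\alpha(V))}F(V,\cdot)$. By \eqref{eq:sec3-ambient-to-fiber-heat-flow},
\[
u(s,x):=[e^{s\Delta}(f_V\circ P_V)](x)=(4\pi s)^{-k/2}\int_{B_V(0,\alpha(V))}e^{-\frac{|P_Vx-y|^2}{4s}}F(V,y)\,d\lambda_V(y),
\]
and $\partial_tF_V=\partial_s u/u$. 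So I need an upper bound for $|\partial_s u|$ and a positive lower bound for $u$, both uniform in $V$ and $s\in[t/2,2t]$.

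For the upper bound, I differentiate under the fiber integral, which is legitimate since the domain is compact and the integrand is smooth in $s$:
\[
\partial_s\Bigl((4\pi s)^{-k/2}e^{-|P_Vx-y|^2/4s}\Bigr)=(4\pi s)^{-k/2}e^{-|P_Vx-y|^2/4s}\Bigl(-\tfrac{k}{2s}+\tfrac{|P_Vx-y|^2}{4s^2}\Bigr).
\]
On the domain we have $|P_Vx-y|\le|x|+\alpha_M$, and also $F\le\|\mathcal F\|_\infty$ and $\lambda_V(B_V(0,\alpha_M))=\omega_k\alpha_M^k$. Together these give $|\partial_s u|\le C(t,x,k,\alpha_M,\|\mathcal F\|_\infty)$, independent of $V$.

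The lower bound is the main point, since $u$ must stay away from zero uniformly. I restrict the integral to $B_V(0,\alpha_m)$, where $\alpha_m=\min\alpha>0$, and use $F\ge F_m>0$ on the compact set $K=\{(V,y)\in T_{n,k}:|y|\le\alpha_m\}$, exactly as in the proof of Theorem~\ref{thm:sec1-compact-grassmannian-bl}. This gives
\[
u\ge(8\pi t)^{-k/2}e^{-(|x|+\alpha_m)^2/(2t)}F_m\,\omega_k\alpha_m^k>0,
\]
uniformly in $V$ and $s\in[t/2,2t]$.

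Hence $|\partial_sF_V(s,x)|\le C'$ on $[t/2,2t]\times G(n,k)$. For $|\tau|<t/2$, the mean value theorem gives $|F_V(t+\tau,x)-F_V(t,x)|/|\tau|\le C'$. The constant $C'$ is $\mu$-integrable because $\mu(G(n,k))=n/k$. The difference quotients converge pointwise to $\partial_tF_V(t,x)$, and measurability in $V$ comes from Lemma~\ref{lem:sec3-measurabilities of mu-integrands} (limits of measurable quotients). Dominated convergence then yields the interchange. The $\delta$-regularized version with $f_V+\delta$ follows in the same way, and is easier, since $u+\delta\ge\delta$.
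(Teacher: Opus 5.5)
Your proposal is correct and follows essentially the same route as the paper. Both arguments use the mean value theorem to reduce to a $V$-uniform bound on $|\partial_t F_V(\tau,x)|$ for $\tau$ in a compact interval around $t$, bounding the numerator from above and the denominator from below via $\alpha_m=\min\alpha$ and the positive minimum $F_m$ of $F$ on $\{(V,y)\in T_{n,k}:|y|\le\alpha_m\}$. The only differences are cosmetic: you give the Gaussian lower bound and the numerator bound with explicit constants, the latter using the compact support of $f_V$. The paper instead obtains these bounds from compactness and continuity arguments and from an $L^1$ bound on $\partial_t\gamma^V_\tau$.
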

\begin{proof}
    Recall the definition of differentiation:
        \[
            \partial_t F_V(t,x)
            =
            \lim_{q\to 0}
                \frac{F_V(t+q,x) - F_V(t,x)}{q}.
        \]
    Since $\partial_t F_V$ exists, we only need to show
        \[
            \lim_{q\to 0}
                \int_{G(n,k)}
                    \frac{F_V(t+q,x) - F_V(t,x)}{q}
                \,d\mu(V)
            =
            \int_{G(n,k)}
                \lim_{q\to 0}
                    \frac{F_V(t+q,x) - F_V(t,x)}{q}
            \,d\mu(V).
        \]
    By the mean value theorem, it is enough to find an $L^1(\mu)$ dominating function for $|\partial_t F_V(\tau,x)|$ on a small neighbourhood, i.e., to find $G\in L^1(\mu)$ such that
        \[
            \sup_{\tau\in [t/2,3t/2]}
                |\partial_t F_V(\tau,x)|
            \le 
            G(V).
        \]
    We let $\gamma^V_t(y) := (4\pi t)^{-k/2}e^{-|y|^2/4t}$ for any $y\in V$.
    By the chain rule and the computation for $u_V$ at the beginning of this section, we have
        \[
            \partial_t F_V(\tau,x)
            =
            \frac
                {\partial_t([\gamma^V_\tau * f_V](P_Vx))}
                {[\gamma^V_\tau * f_V](P_Vx)}.
        \]
    Our goal is to find a strictly positive lower bound for the denominator which depends only on the fixed $(t,x)$, and an upper bound for the numerator which also depends only on the fixed $(t,x)$.
    We compute
        \begin{align*}
            [\gamma^V_\tau * f_V](P_V x)
            =
            \int_V
                \gamma^V_\tau(P_Vx - y) f_V(y)
            \,d\lambda_V(y)
            =
            \int_{B_V(0,\alpha(V))}
                \gamma^V_\tau(P_Vx - y) f_V(y)
            \,d\lambda_V(y).
        \end{align*}
    Since $\alpha$ is continuous on $G(n,k)$, which is compact, we can find
        \[
            \alpha_m 
            := 
            \inf_{V\in G(n,k)}
                \alpha(V)
            >
            0.
        \]
    We have
        \[
            \int_{B_V(0,\alpha(V))}
                \gamma^V_\tau(P_Vx - y) f_V(y)
            \,d\lambda_V(y)
            \ge
            \int_{B_V(0,\alpha_m)}
                \gamma^V_\tau(P_Vx - y) f_V(y)
            \,d\lambda_V(y).
        \]
    Note that $f_V(y) = F(V,y)$ on $B_V(0,\alpha_m)$ for all $V$.
    Moreover, $F$ is a positive continuous function on the compact set
        \[
            \{(V,y)\in G(n,k)\times B_{\R^n}(0,\alpha_m): y\in V\}.
        \]
    Thus we can find
        \[
            F_m
            :=
            \inf_{(V,y)}
                F(V,y)
            >
            0.
        \]
    Therefore,
        \[
            \int_{B_V(0,\alpha_m)}
                \gamma^V_\tau(P_Vx - y) f_V(y)
            \,d\lambda_V(y)
            \ge
            F_m
            \int_{B_V(0,\alpha_m)}
                \gamma^V_\tau(P_Vx - y)
            \,d\lambda_V(y).
        \]
    For fixed $(\tau, x)$, the map
        \[
            (V,y)
            \mapsto
            \gamma^V_\tau(P_Vx - y)
        \]
    is also a positive continuous function on 
        \[
            \{(V,y)\in G(n,k)\times B_{\R^n}(0,\alpha_m): y\in V\}.
        \]
    Thus we can find a quantity
        \[
            m_1(\tau,x)
            :=
            \inf_{(V,y)}
                \gamma^V_\tau(P_Vx - y)
            >
            0.
        \]
    Therefore,
        \[
            F_m
            \int_{B_V(0,\alpha_m)}
                \gamma^V_\tau(P_Vx - y)
            \,d\lambda_V(y)
            \ge
            F_m
            m_1(\tau,x)
            \lambda_V(B_V(0,\alpha_m)).
        \]
    Since $\lambda_V$ is the Lebesgue measure induced by $\lambda_{\R^n}$ and $B_V(0,\alpha_m)$ is defined by the metric on $V$ induced by the standard Euclidean metric on $\R^n$, $\lambda_V(B_V(0,\alpha_m))$ is independent of $V$.
    Therefore, for all fixed $(\tau,x)$, we can find a strictly positive quantity
        \[
            \tilde{m}_1(\tau,x):= F_m m_1(\tau,x)\lambda_V(B_V(0,\alpha_m)),
        \]
    such that
        \[
            |\partial_t F_V(\tau,x)|
            \le
            \left|
            \frac
                {\partial_t([\gamma^V_\tau * f_V](P_Vx))}
                {\tilde{m}_1(\tau,x)}
            \right|.
        \]
    For the numerator, first note that 
        \[
            \partial_t([\gamma^V_\tau * f_V](P_Vx))
            =
            [(\partial_t\gamma^V_\tau) * f_V ](P_Vx).
        \]
    Then, by the triangle inequality, our assumption $f_V(y) \le \mathcal{F}(|y|)$, and the standard upper bound for $\int_V |\partial_t\gamma^V_\tau(P_Vx-y)|d\lambda_V(y)$, we have
        \[
            |\partial_t([\gamma^V_\tau * f_V](P_Vx))|
            \le
            \|\mathcal{F}\|_{L^\infty([0,\infty))}
            \frac{C_{k}}{\tau}, \quad\text{for some positive constant }C_k.
        \]
    Combining the estimates for the denominator and the numerator, we have
        \[
            |\partial_tF_V(\tau,x)|
            \le
            \frac{C\|\mathcal{F}\|_{L^\infty([0,\infty))}}{\tilde{m}_1(\tau,x)\tau}.
        \]
    Since $\tau\in [t/2, 3t/2]$ and the map
        \[
            \big(\tau, (V,y)\big)
            \mapsto
            \gamma^V_\tau(P_Vx-y)
        \]
    is continuous on the compact set
        \[
            [t/2,3t/2]\times \{(V,y)\in G(n,k)\times B_{\R^n}(0,\alpha_m): y\in V\},
        \]
    there exists $\tilde{\tilde{m}}_1(t,x) > 0$ such that
        \[
            \tilde{m}_1(\tau,x) \ge \tilde{\tilde{m}}_1(t,x) \quad\text{for }\tau \in [t/2,3t/2].
        \]
    Therefore, 
        \[
            \sup_{\tau\in [t/2,3t/2]}
                |\partial_t F_V(\tau,x)|
            \le
            \frac{2C\|\mathcal{F}\|_{L^\infty([0,\infty))}}{\tilde{\tilde{m}}_1(t,x)t}.
        \]
    The dominating function is just a constant function in $V$ for fixed $(t,x)$, hence, it is $\mu$-integrable. 
    The dominated convergence theorem then gives the result.
\end{proof}

\begin{lemma}\label{lem:sec3-swipe nabla with mu-integral}
    We fix $x\in \R^n$ and $t > 0$. For the function $F_V(t,x)$ defined above, we have
        \[
            \nabla_{\R^n}
                \int_{G(n,k)}
                    F_V(t,x)
                \,d\mu(V)
            =
            \int_{G(n,k)}
                \nabla_{\R^n}
                    F_V(t,x)
            \,d\mu(V).
        \]
\end{lemma}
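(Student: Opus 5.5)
I will follow the same scheme as in Lemma~\ref{lem:sec3-swipe time derivative and mu-integral}, now for spatial derivatives. Fix $t>0$ and $x\in\R^n$, and let $e_a$ be a standard basis vector. It suffices to show, for each $a$,
\[
\partial_{x_a}\int_{G(n,k)}F_V(t,x)\,d\mu(V)=\int_{G(n,k)}\partial_{x_a}F_V(t,x)\,d\mu(V).
\]
By the mean value theorem, the difference quotients $\bigl(F_V(t,x+qe_a)-F_V(t,x)\bigr)/q$ with $|q|\le 1$ are bounded by $\sup_{z\in B_{\R^n}(x,1)}|\nabla_{\R^n}F_V(t,z)|$. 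The whole task is therefore to bound this supremum by a constant independent of $V$. Dominated convergence on the finite measure $\mu$ then finishes the argument, since each $F_V(t,\cdot)$ is already $C^1$ in $x$ and the limit of the difference quotients is $\partial_{x_a}F_V$.

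Write $F_V(t,z)=\log[\gamma^V_t*f_V](P_Vz)$ using \eqref{eq:sec3-ambient-to-fiber-heat-flow}. The chain rule gives
\[
\nabla_{\R^n}F_V(t,z)=\frac{[(\nabla\gamma^V_t)*f_V](P_Vz)}{[\gamma^V_t*f_V](P_Vz)},
\]
and this vector lies in $V$.

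\emph{Denominator.} I reuse the lower bound from the proof of Lemma~\ref{lem:sec3-swipe time derivative and mu-integral}:
\[
[\gamma^V_t*f_V](P_Vz)\ge F_m\,\lambda_V(B_V(0,\alpha_m))\inf\gamma^V_t(P_Vz-y),
\]
where the infimum runs over $y\in V$ with $|y|\le\alpha_m$. Since $|P_Vz|\le|x|+1$, we have $|P_Vz-y|\le |x|+1+\alpha_m$, so the infimum is at least $(4\pi t)^{-k/2}e^{-(|x|+1+\alpha_m)^2/4t}$. This bound is explicit, positive, and uniform in $V$ and in $z\in B(x,1)$.

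\emph{Numerator.} Using $f_V\le\mathcal F(|\cdot|)\le\|\mathcal F\|_\infty$, we get
\[
\int_V|\nabla\gamma^V_t(P_Vz-y)|\,d\lambda_V(y)=\frac{C_k}{\sqrt t},
\]
which is independent of $V$ because $V$ is isometric to $\R^k$. Hence the numerator is at most $\|\mathcal F\|_\infty C_k t^{-1/2}$.

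Combining the two estimates gives a constant dominating function, which is $\mu$-integrable because $\mu(G(n,k))=n/k$. The measurability of $V\mapsto\nabla F_V(t,x)$ follows as in Lemma~\ref{lem:sec3-measurabilities of mu-integrands}: either by Tonelli applied to the positive and negative parts of the kernel derivative, or as a pointwise limit of measurable difference quotients.

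The only delicate point is the uniform-in-$V$ lower bound on the denominator. It relies on the compactness of $\{(V,y):|y|\le\alpha_m\}$ and on the positivity of $F$ there, exactly as before. In the strictly positive, non-truncated family, the same bound holds after restricting the integral to $B_V(0,\alpha_m)$ for any fixed radius.
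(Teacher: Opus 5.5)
Your proposal is correct and follows essentially the same route as the paper. You bound $\sup_{z}|\nabla F_V(t,z)|$ over a ball around $x$ by a $V$-independent constant, using an $L^1$ bound on the heat-kernel gradient for the numerator and the lower bound $F_m\lambda_V(B_V(0,\alpha_m))\inf\gamma^V_t$ for the denominator, and then conclude by the mean value theorem and dominated convergence; the differences are cosmetic, namely an explicit Gaussian lower bound in place of the paper's compactness argument and the $k$-dimensional kernel on $V$ in place of $\|\nabla\gamma^n_{t}\|_{L^1(\R^n)}$.
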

\begin{proof}
    We fix $x_0\in \R^n$ and $t_0 > 0$.
    We claim that, if there exists $G\in L^1(\mu)$ such that
        \[
            \sup_{x\in B_{\R^n}(x_0,\rho)}
                |\nabla_{\R^n} F_V(t_0,x)|
            \le
            G(V),\quad\text{for some }\rho > 0,
        \]
    for $\mu$-a.e. $V\in G(n,k)$, then the lemma follows.
    Indeed, we first consider the spatial Frech\'et derivative of $F_V$ at $(t_0,x_0)$:
        \[
            D_{\R^n}F_V(t_0,x_0): \R^n\to \R.
        \]
    It satisfies for any direction $p\in \R^n$,
        \[
            D_{\R^n}F_V(t_0,x_0)[p]
            =
            \lim_{\varepsilon\to 0}
                \frac{F_V(t_0,x_0 + \varepsilon p) - F_V(t_0,x_0)}{\varepsilon}.
        \]
    The Frech\'et derivative can be uniquely identified with the gradient through the inner product $\inner{\cdot}{\cdot}_{\R^n}$:
        \[
            D_{\R^n}F_V(t_0,x_0)[p]
            =
            \inner{\nabla_{\R^n}F_V(t_0,x_0)}{p}_{\R^n}.
        \]
    Now define a function $\varphi:[0,1]\to \R$ by $\varphi(s) = F_V(t_0,x_0 + s \varepsilon p)$.
    Note that $\varphi(0) = F_V(t_0,x_0)$ and $\varphi(1) = F_V(t_0,x_0 + \varepsilon p)$.
    Also, by the chain rule, we have
        \[
            \frac{d\varphi}{ds}
            =
            \inner{\nabla_{\R^n}F_V(t_0,x_0 + s \varepsilon p)}{\varepsilon p}_{\R^n}.
        \]
    Applying the fundamental theorem of calculus to $\varphi$, we have
        \[
            \frac{F_V(t_0,x_0 + \varepsilon p) - F_V(t_0,x_0)}{\varepsilon}
            =
            \int_0^1
                \inner{\nabla_{\R^n}F_V(t_0,x_0 + s \varepsilon p)}{p}_{\R^n}
            \,ds.
        \]
    In particular, we have
        \[
            \left|
                \frac{F_V(t_0,x_0 + \varepsilon p) - F_V(t_0,x_0)}{\varepsilon}
            \right|
            \le
            \sup_{s\in [0,1]}
                |\nabla_{\R^n}F_V(t_0,x_0 + s \varepsilon p)| |p|_{\R^n}.
        \]
    Thus, if we can find some $\rho > 0$ such that on the compact ball $B_{\R^n}(x_0,\rho)$,
        \[
            \sup_{x\in B_{\R^n}(x_0,\rho)}
                |\nabla_{\R^n}F_V(t_0,x)|
            \le
            G(V),
        \]
    then we may choose $\varepsilon$ small such that $x_0+s\varepsilon p\in B_{\R^n}(x_0,\rho)$.
    Therefore, we have
        \[
            \left|
                \frac{F_V(t_0,x_0 + \varepsilon p) - F_V(t_0,x_0)}{\varepsilon}
            \right|
            \le
            G(V)|p|_{\R^n}.
        \]
    The dominated convergence theorem gives
        \begin{align*}
            \lim_{\varepsilon\to 0}
                \int_{G(n,k)}
                    \frac{F_V(t_0,x_0 + \varepsilon p) - F_V(t_0,x_0)}{\varepsilon}
                \,d\mu(V)
            =
            \biginner
                {\int_{G(n,k)}
                    \nabla_{\R^n} F_V(t_0,x_0)
                \,d\mu(V)}
                {p}_{\R^n}.
        \end{align*}
    By the linearity of integration, the left-hand side is just
        \begin{align*}
            D_{\R^n}
                \left(
                    \int_{G(n,k)}
                        F_V(t_0,\cdot)
                    \,d\mu(V)
                \right)(x_0)[p]
            =
            \biginner
                {\nabla_{\R^n}
                    \left(
                        \int_{G(n,k)}
                            F_V(t_0,\cdot)
                        \,d\mu(V)
                    \right)(x_0)
                }
                {p}_{\R^n}.
        \end{align*}
    This shows that, for any unit vector $p\in \R^n$, we have
        \[
            \biginner
                {\int_{G(n,k)}
                    \nabla_{\R^n} F_V(t_0,x_0)
                \,d\mu(V)}
                {p}_{\R^n}
            =
            \biginner
                {\nabla_{\R^n}
                    \left(
                        \int_{G(n,k)}
                            F_V(t_0,\cdot)
                        \,d\mu(V)
                    \right)(x_0)
                }
                {p}_{\R^n}.
        \]
    Therefore,
        \[
            \nabla_{\R^n}
                \left(
                    \int_{G(n,k)}
                        F_V(t_0,\cdot)
                    \,d\mu(V)
                \right)(x_0)
            =
            \int_{G(n,k)}
                \nabla_{\R^n} F_V(t_0,x_0)
            \,d\mu(V),
        \]
    as claimed.
    It remains to find such a dominating function $G$.
    Recall the function $u_V(t,x)$ computed at the beginning of this section. By the chain rule, we have
        \[
            \nabla_{\R^n}
                F_V(t_0,x)
            =
            \frac{\nabla_{\R^n}u_V(t_0,x)}{u_V(t_0,x)}.
        \]
    If we let $\gamma^n_t(z):= (4\pi t)^{-n/2}e^{-|z|^2/4t}$ for any $z\in \R^n$, then
        \[
            u_V(t_0,x)
            =
            \int_{\R^n}
                \gamma^n_{t_0}(x-z)
                f_V(P_Vz)
            \,d\lambda_{\R^n}(z).
        \]
    For the numerator, we note that
        \[
            \nabla_{\R^n}
            \left(
                \int_{\R^n}
                    \gamma^n_{t_0}(x-z)
                    f_V(P_Vz)
                \,d\lambda_{\R^n}(z)
            \right)
            =
            \int_{\R^n}
                    [\nabla_{\R^n}
                        \gamma^n_{t_0}](x-z)
                f_V(P_Vz)
            \,d\lambda_{\R^n}(z).
        \]
    By our assumptions on $f_V$, we have
        \begin{align*}
            |\nabla_{\R^n}u_V(t_0,x)|
            &\le
            \|\mathcal{F}\|_{L^\infty([0,\infty))}
            \|\nabla_{\R^n}\gamma^n_{t_0}\|_{L^1(\R^n)}\\
            &\le
            C_{n,t_0}
            \|\mathcal{F}\|_{L^\infty([0,\infty))},\quad\text{for some constant }C_{n,t_0}.
        \end{align*}
    Next we consider the denominator.
    By the computation at the beginning of this section, $u_V$ can also be written as an integral over $V$, namely
        \[
            u_V(t_0,x)
            =
            \int_V
                \gamma^V_{t_0}(P_Vx - y)
                f_V(y)
            \,d\lambda_V(y).
        \]
    As in the proof of the previous lemma, we have
        \[
            u_V(t_0,x)
            \ge
            F_m
            \int_{B_V(0,\alpha_m)}
                \gamma^V_{t_0}(P_Vx - y)
            \,d\lambda_V(y).
        \]
    With the product topology, the function $(x,(V,y))\mapsto \gamma^V_{t_0}(P_Vx - y) > 0$ is continuous on
        \[
            B_{\R^n}(x_0,\rho)
            \times
            \{
                (V,y)\in G(n,k)\times B_{\R^n}(0,\alpha_m): y\in V
            \}
            \quad
            \text{for any fixed }\rho > 0.
        \]
    Moreover, this set is compact.
    Thus there exists a constant $m_2(t_0,x_0,\rho) > 0$ such that, for any $x\in B_{\R^n}(x_0,\rho)$,
        \[
            u_V(t_0,x)
            \ge
            F_m\lambda_V(B_V(0,\alpha_m)) m_2(t_0,x_0,\rho) =: \tilde{m}_2(t_0,x_0,\rho) > 0.
        \]
    Therefore, for this fixed $\rho > 0$, we have
        \[
            \sup_{x\in B_{\R^n}(x_0,\rho)}
                |\nabla_{\R^n} F_V(t_0,x)|
            \le
            \frac{C_{n,t_0}\|\mathcal{F}\|_{\infty}}{\tilde{m}_2(t_0,x_0,\rho)}.
        \]
    The dominating function is just a constant function in $V$ for the fixed $(t_0,x_0)$, hence it is $\mu$-integrable.
    The lemma follows from our claim.
\end{proof}

\begin{lemma}\label{lem:sec3-swipe hessian with mu integral}
    We fix $x\in \R^n$ and $t > 0$. 
    For the function $F_V(t,x)$ defined above, we have
        \[
            \operatorname{Hess}_{\R^n}
                \int_{G(n,k)}
                    F_V(t,x)
                \,d\mu(V)
            =
            \int_{G(n,k)}
                \operatorname{Hess}_{\R^n}
                    F_V(t,x)
            \,d\mu(V).
        \]
\end{lemma}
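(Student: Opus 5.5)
We follow the template of Lemma~\ref{lem:sec3-swipe nabla with mu-integral}, now applied to the vector-valued maps $x\mapsto\nabla_{\R^n}F_V(t,x)$ instead of $x\mapsto F_V(t,x)$. Fix $x_0\in\R^n$, $t_0>0$ and $\rho>0$. The first task is a constant $G$, independent of $V$, with
\[
\sup_{x\in B_{\R^n}(x_0,\rho)}\|\Hess_{\R^n}F_V(t_0,x)\|_{\mathrm{op}}\le G .
\]
Given this bound, take two directions $p,q\in\R^n$. Apply the fundamental theorem of calculus along the segment $s\mapsto x_0+s\varepsilon q$ to the scalar function $\inner{\nabla_{\R^n}F_V(t_0,\cdot)}{p}$. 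This bounds its difference quotients by $G|p||q|$. Dominated convergence, together with Lemma~\ref{lem:sec3-swipe nabla with mu-integral} (which identifies $\nabla_{\R^n}\int F_V\,d\mu$ with $\int\nabla_{\R^n}F_V\,d\mu$ near $x_0$), then gives
\[
\inner{\Hess_{\R^n}\Bigl(\int F_V\,d\mu\Bigr)(x_0)\,q}{p}
=
\int\inner{\Hess_{\R^n}F_V(t_0,x_0)\,q}{p}\,d\mu(V).
\]
Since $p$ and $q$ are arbitrary, this is the claimed matrix identity. Because $\mu$ is finite, a constant dominating function suffices.

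To obtain $G$, I would use the quotient rule with $u_V=u_V(t_0,\cdot)$:
\[
\Hess_{\R^n}F_V=\frac{\Hess_{\R^n}u_V}{u_V}-\frac{\nabla_{\R^n}u_V\otimes\nabla_{\R^n}u_V}{u_V^2}.
\]
The numerators are controlled as in the previous lemma. Differentiating under the $z$-integral in $u_V(t_0,x)=\int_{\R^n}\gamma^n_{t_0}(x-z)f_V(P_Vz)\,dz$ gives
\[
\|\Hess_{\R^n}u_V(t_0,x)\|_{\mathrm{op}}\le\|\mathcal F\|_{L^\infty}\,\|\Hess\gamma^n_{t_0}\|_{L^1(\R^n)}=C'_{n,t_0}\|\mathcal F\|_{L^\infty},
\]
and $|\nabla_{\R^n}u_V|\le C_{n,t_0}\|\mathcal F\|_{L^\infty}$ as before. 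Both bounds are uniform in $V$ and $x$. Differentiation under the integral is legitimate because $f_V$ is bounded and the derivatives of the Gaussian are integrable, locally uniformly in $x$.

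For the denominator, I would reuse the compactness lower bound $u_V(t_0,x)\ge\tilde m_2(t_0,x_0,\rho)>0$ for all $x\in B_{\R^n}(x_0,\rho)$ and all $V$. This bound was established in the proof of Lemma~\ref{lem:sec3-swipe nabla with mu-integral} from $F_m$, $\alpha_m$ and the positivity of $(x,(V,y))\mapsto\gamma^V_{t_0}(P_Vx-y)$ on a compact set. Combining the two estimates gives
\[
G=\frac{C'_{n,t_0}\|\mathcal F\|_{\infty}}{\tilde m_2}+\frac{C_{n,t_0}^2\|\mathcal F\|_\infty^2}{\tilde m_2^2}.
\]

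The only delicate point is the uniform-in-$V$ positive lower bound on $u_V$ over a neighbourhood of $x_0$. This is what forces the compactly supported setting with $\alpha_m>0$ and $F_m>0$, and it is already available from the previous lemma, so the rest is routine bookkeeping. Measurability in $V$ of the entries of $\Hess_{\R^n}F_V(t_0,x)$ follows as in Lemma~\ref{lem:sec3-measurabilities of mu-integrands}: each entry is a pointwise limit of difference quotients of measurable functions, or equivalently is given by Fubini--Tonelli applied to the explicit Gaussian-derivative integrands.
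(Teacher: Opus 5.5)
Your proposal is correct and follows essentially the same route as the paper. The paper likewise uses Lemma~\ref{lem:sec3-swipe nabla with mu-integral} and the fundamental theorem of calculus along segments to reduce the claim to a $V$-independent bound on $\sup_{x\in B_{\R^n}(x_0,\rho)}\|\Hess_{\R^n}F_V(t_0,x)\|_{\mathrm{op}}$, and it obtains that bound from the same quotient-rule formula with the same ingredients: the $L^1$ bounds on $\nabla\gamma^n_{t_0}$ and $\Hess\gamma^n_{t_0}$, and the lower bound $\tilde m_2(t_0,x_0,\rho)$ from the previous lemma.
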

\begin{proof}
    We apply the same proof as in Lemma~\ref{lem:sec3-swipe nabla with mu-integral} to the first order spatial derivative of $F_V$.
    We fix $x_0\in \R^n$ and $t_0 > 0$.
    We first recall the definition of the second-order Frech\'et derivative of $F_V$:
        \[
            D_{\R^n}^2 F_V(t_0,x_0) : \R^n \to \mathcal{L}(\R^n,\R)
        \]
    is a linear map such that for any $q\in \R^n$
        \[
            D_{\R^n}F_V(t_0,x_0 + q)
            =
            D_{\R^n}F_V(t_0,x_0)
            +
            D_{\R^n}^2F_V(t_0,x_0)[q]
            +
            o(|q|).
        \]
    We usually identify $D_{\R^n}^2F_V(t_0,x_0)$ with a bilinear form in $\mathcal{B}(\R^n\times \R^n,\R)$: $D_{\R^n}^2F_V(t_0,x_0)[p,q]$ for any $(p,q)\in \R^n\times \R^n$.
    If $F_V$ is in $C^2$, then
        \[
            D_{\R^n}^2F_V(t_0,x_0)[p,q]
            =
            \inner{\operatorname{Hess}_{\R^n}F_V(t_0,x_0)p}{q}_{\R^n},
        \]
    where $\operatorname{Hess}_{\R^n}F_V(t_0,x_0) = (\partial_i\partial_j F_V(t_0,x_0))_{i,j=1}^n$.
    In particular, we have
        \[
            D_{\R^n}^2F_V(t_0,x_0)[p,p]
            =
            p^T\operatorname{Hess}_{\R^n}F_V(t_0,x_0) p.
        \]

    Now we choose any $p,q\in \R^n$.
    We should reduce the number of notations, but for convenience, let us define one more notation:
        \[
            I(t,x)
            :=
            \int_{G(n,k)}
                F_V(t,x)
            \,d\mu(V).
        \]
    We claim that, if there exists $G\in L^1(\mu)$ such that
        \[
            \sup_{x\in B_{\R^n}(x_0,\rho)}
                \| D_{\R^n}^2 F_V(t_0,x)\|_{\operatorname{op}}
            \le
            G(V),
            \quad
            \text{for some }\rho > 0
        \]
    for $\mu$-a.e. $V\in G(n,k)$, then Lemma~\ref{lem:sec3-swipe hessian with mu integral} follows.
    Let us prove this claim.
    By Lemma~\ref{lem:sec3-swipe nabla with mu-integral}, in some neighbourhood of $x_0$, we have
        \[
            D_{\R^n}
                I(t_0,x_0)
            [q]
            =
            \int_{G(n,k)}
                D_{\R^n}
                    F_V(t_0,x)
                [q]
            \,d\mu(V).
        \]
    Thus,
        \begin{align*}
            \frac
            {
                D_{\R^n}
                    I(t_0,x_0+\varepsilon p)
                [q]
            -
                D_{\R^n}
                    I(t_0,x_0)
                [q]
            }
            {\varepsilon}
            =\,
            \int_{G(n,k)}
                \frac
                {
                    D_{\R^n}F_V(t_0,x_0+\varepsilon p)[q]
                    -
                    D_{\R^n}F_V(t_0,x_0)[q]
                }
                {
                    \varepsilon
                }
            \,d\mu(V).
        \end{align*}
    Similarly, we conside a function $\varphi: [0,1]\to \R$ defined by $\varphi(s) = D_{\R^n}F_V(t_0,x_0 + s\varepsilon p)[q]$.
    Particularly, $\varphi(0) = D_{\R^n}F_V(t_0,x_0)[q]$ and $\varphi(1) = D_{\R^n}F_V(t_0,x_0 + \varepsilon p)[q]$.
    By chain rule, we have
        \[
            \frac{d \varphi(s)}{ds}
            =
            D_{\R^n}^2 F_V(t_0,x_0 + s \varepsilon p)[\varepsilon p,q].
        \]
    Thus, by the fundamental theorem of calculus, we have
        \[
            \varphi(1) - \varphi(0)
            =
            \int_0^1
                D_{\R^n}^2
                    F_V(t_0,x_0+s\varepsilon p)[\varepsilon p,q]
            \,ds.
        \]
    Divide both sides by $\varepsilon$ and substitute, we have
        \[
            \frac
            {
                D_{\R^n} F_V(t_0,x_0+\varepsilon p)[q]
                -
                D_{\R^n}F_V(t_0,x_0)[q]
            }
            {
                \varepsilon
            }
            =
            \int_0^1
                D_{\R^n}^2
                    F_V(t_0,x_0+s\varepsilon p)[p,q]
            \,ds.
        \]
    In particular, we have
        \[
            \left|
            \frac
            {
                D_{\R^n} F_V(t_0,x_0+\varepsilon p)[q]
                -
                D_{\R^n}F_V(t_0,x_0)[q]
            }
            {
                \varepsilon
            }
            \right|
            \le
            \sup_{s\in [0,1]}
                \|D_{\R^n}^2F_V(t_0,x_0+s\varepsilon p)\|_{\operatorname{op}}
            |p||q|.
        \]
    Thus, if we choose $\varepsilon$ small such that for all $0\le s \le 1$, $x_0+s\varepsilon p\in B_{\R^n}(x_0,\rho)$, we obtain
        \[
            \left|
            \frac
            {
                D_{\R^n} F_V(t_0,x_0+\varepsilon p)[q]
                -
                D_{\R^n}F_V(t_0,x_0)[q]
            }
            {
                \varepsilon
            }
            \right|
            \le
            G(V)|p||q|.
        \]
    By the dominated convergence theorem, we have
        \begin{align*}
            D_{\R^n}^2 I(t_0,x_0)
            &=
            \lim_{\varepsilon\to 0}
                \frac
            {
                D_{\R^n} I(t_0,x_0+\varepsilon p)[q]
                -
                D_{\R^n}I(t_0,x_0)[q]
            }
            {
                \varepsilon
            }\\
            &=
            \int_{G(n,k)}
                \lim_{\varepsilon\to 0}
                    \frac
                        {
                            D_{\R^n} F_V(t_0,x_0+\varepsilon p)[q]
                            -
                            D_{\R^n}F_V(t_0,x_0)[q]
                        }
                        {
                            \varepsilon
                        }
            \,d\mu(V)\\
            &=
            \int_{G(n,k)}
                D_{\R^n}^2
                    F_V(t_0,x_0)
                [p,q]
            \,d\mu(V).
        \end{align*}
    Thus, we have
        \[
            \inner{\operatorname{Hess}_{\R^n}I(t_0,x_0)p}{q}_{\R^n}
            =
            \int_{G(n,k)}
                \inner{\operatorname{Hess}_{\R^n}F_V(t_0,x_0)p}{q}
            \,d\mu(V).
        \]
    However, this holds for all $p,q\in \R^n$.
    Therefore,
        \[
            \operatorname{Hess}_{\R^n}I(t_0,x_0)
            =
            \int_{G(n,k)}
                \operatorname{Hess}_{\R^n}F_V(t_0,x_0)
            \,d\mu(V),
        \]
    as we claimed.
    It is left to find such dominating function $G$.
    By the chain rule and the triangle inequality, we have
        \begin{align*}
            \|\operatorname{Hess}_{\R^n} F_V(t_0,x)\|_{\operatorname{op}}
            &=
            \left\|
            \frac{\operatorname{Hess}_{\R^n}u_V(t_0,x)}{u_V(t_0,x)}
            -
            \frac{\nabla_{\R^n}u_V(t_0,x)\otimes \nabla_{\R^n}u_V(t_0,x)}{u_V(t_0,x)^2}
            \right\|_{\operatorname{op}}\\
            &\le
            \frac{\|\operatorname{Hess}_{\R^n}u_V(t_0,x)\|_{\operatorname{op}}}{|u_V(t_0,x)|}
            +
            \frac{\|\nabla_{\R^n}u_V(t_0,x)\otimes \nabla_{\R^n}u_V(t_0,x)\|_{\operatorname{op}}}{u_V(t_0,x)^2}.
        \end{align*}
    We choose some fixed $\rho > 0$, then the proof the Lemma~\ref{lem:sec3-swipe nabla with mu-integral} gives
        \[
            u_V(t_0,x) \ge \tilde{m}_2(t_0,x_0,\rho) > 0,\quad\text{for all }x\in B_{\R^n}(x_0,\rho),
        \]
    and
        \[
            |\nabla_{\R^n}u_V(t_0,x)| \le C_{n,t_0} \|\mathcal{F}\|_{\infty} < \infty.
        \]
    On the other hand, we note that
        \begin{align*}
            \operatorname{Hess}_{\R^n} u_V(t_0,x)
            &=
            \int_{\R^n}
                (\operatorname{Hess}_{\R^n}\gamma^n_{t_0})(x-z)f_V(P_Vz)
            \,d\lambda_{\R^n}(z).
        \end{align*}
    Thus,
        \[
            \|\operatorname{Hess}_{\R^n} u_V(t_0,x)\|_{\operatorname{op}}
            \le
            \|\mathcal{F}\|_{L^\infty([0,\infty))}
            \|\operatorname{Hess}_{\R^n}\gamma^n_{t_0}\|_{L^1(\R^n)}.
        \]
    We have find an estimation for $\|\operatorname{Hess}_{\R^n}\gamma^n_{t_0}\|_{L^1(\R^n)}$.
    The $ij$-entry of this Hessian matrix is
        \[
            \partial_i\partial_j
                \gamma^n_{t_0}(z)
            =
            \left(
                \frac{z_iz_j}{4t_0^2}
                -
                \frac{\delta_{ij}}{2t_0}
            \right)
            \gamma^n_{t_0}(z).
        \]
    Therefore, we consider the operator norm
        \[
            \|\operatorname{Hess}_{\R^n}\gamma^n_{t_0}(z)\|_{\operatorname{op}}
            \le
            \left(
                \frac{\|z\otimes z\|_{\operatorname{op}}}{4t_0^2}
                -
                \frac{\|I_d\|_{\operatorname{op}}}{2t_0}
            \right)
            \gamma^n_{t_0}(z)
            =
            \left(
                \frac{|z|^2}{4t_0^2}
                -
                \frac{1}{2t_0}
            \right)
            \gamma^n_{t_0}(z).
        \]
    Thus,
        \begin{align*}
            \|\operatorname{Hess}_{\R^n}\gamma^n_{t_0}\|_{L^1(\R^n)}
            &=
            \int_{\R^n}
                \|\operatorname{Hess}_{\R^n}\gamma^n_{t_0}(z)\|_{\operatorname{op}}
            \,d\lambda_{n}(z)\\
            &\le
            \int_{\R^n}
                \left(
                    \frac{|z|^2}{4t_0^2}
                    -
                    \frac{1}{2t_0}
                \right)
                \gamma^n_{t_0}(z)
            \,d\lambda_{n}(z)\\
            &\le
            \frac{1}{4t_0^2}\cdot 2n t_0 + \frac{1}{2t_0} =: \tilde{C}_{n,t_0}.
        \end{align*}
    Therefore, combine with other estimations, we have
        \[
            \|\operatorname{Hess}_{\R^n} F_V(t_0,x)\|_{\operatorname{op}}
            \le
            \frac{\tilde{C}_{n,t_0}\|\mathcal{F}\|_{L^\infty([0,\infty))}}{\tilde{m}_2(t_0,x_0,\rho)}
            +
            \frac{C_{n,t_0}^2
            \|\mathcal{F}\|_{L^\infty([0,\infty))}^2}{\tilde{m}_2(t_0,x_0,\rho)^2}.
        \]
    The right-hand side is just a constant function of $V$, hence $\mu$-integrable.
    Our claim proves this lemma.
\end{proof}

\begin{lemma}\label{lem:sec3-swipe laplacian with mu integral}
    We fix $x\in \R^n$ and $t > 0$. 
    For the function $F_V(t,x)$ defined above, we have
        \[
            \Delta_{\R^n}
                \int_{G(n,k)}
                    F_V(t,x)
                \,d\mu(V)
            =
            \int_{G(n,k)}
                \Delta_{\R^n}
                    F_V(t,x)
            \,d\mu(V).
        \]
\end{lemma}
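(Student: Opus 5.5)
The plan is to deduce this directly from Lemma~\ref{lem:sec3-swipe hessian with mu integral} by taking traces. The Laplacian is the trace of the Hessian:
\[
    \Delta_{\R^n}\Phi(t,x)=\tr\bigl(\operatorname{Hess}_{\R^n}\Phi(t,x)\bigr)
\]
for any function $\Phi$ that is twice differentiable in $x$. We apply this with $\Phi=I(t,\cdot):=\int_{G(n,k)}F_V(t,\cdot)\,d\mu(V)$. That lemma already shows that $I(t,\cdot)$ is twice differentiable at every point, and that its Hessian equals the $\mu$-integral of $\operatorname{Hess}_{\R^n}F_V(t,x)$. Each $F_V(t,\cdot)$ is $C^2$ because $u_V(t,\cdot)>0$ is a smooth solution of the heat equation.

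First, we use that the trace is a linear functional on $n\times n$ matrices. It is the finite sum of the diagonal entries $\inner{(\cdot)e_i}{e_i}$. Hence it commutes with the entrywise (Bochner) integral of the matrix-valued function $V\mapsto\operatorname{Hess}_{\R^n}F_V(t,x)$. This integral is well defined because the proof of Lemma~\ref{lem:sec3-swipe hessian with mu integral} supplies a bound on $\|\operatorname{Hess}_{\R^n}F_V(t,x)\|_{\operatorname{op}}$ that is constant in $V$. Each entry is therefore $\mu$-integrable, using that $\mu$ is finite by \eqref{eq:sec2-grassmannian-total-mass}.

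Concretely, taking $p=q=e_i$ in the identity $\inner{\operatorname{Hess}I\,p}{q}=\int\inner{\operatorname{Hess}F_V\,p}{q}\,d\mu$ established there, and summing over $i=1,\dots,n$, we get
\[
    \Delta_{\R^n}I(t,x)=\sum_{i=1}^n\int_{G(n,k)}\partial_i^2F_V(t,x)\,d\mu(V)=\int_{G(n,k)}\Delta_{\R^n}F_V(t,x)\,d\mu(V).
\]
The last step exchanges a finite sum with the integral, which is justified because each summand is integrable.

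There is essentially no obstacle here. The only points to check are that every diagonal entry is $\mu$-integrable and that measurability in $V$ holds. Integrability follows from $|\partial_i^2F_V|\le\|\operatorname{Hess}F_V\|_{\operatorname{op}}$ together with the uniform bound. Measurability follows because each $\partial_i^2F_V(t,x)$ is a pointwise limit of measurable difference quotients, using Lemma~\ref{lem:sec3-measurabilities of mu-integrands} and the analogous measurability of the first derivatives.
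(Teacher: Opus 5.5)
Your proposal is correct and matches the paper's argument: the paper also obtains this lemma by taking the trace in Lemma~\ref{lem:sec3-swipe hessian with mu integral}. Your additional checks are the routine justifications behind that one-line proof, namely linearity of the trace under the integral, integrability of the diagonal entries from the $V$-independent operator-norm bound, and measurability in $V$.
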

\begin{proof}
    Follows directly from Lemma~\ref{lem:sec3-swipe hessian with mu integral} by taking trace.
\end{proof}

The next estimate is used in proving the nonnegativity of the first order part in~\eqref{eq:sec3-comparison-evolution}.
\begin{lemma}\label{lem:sec3-isotropic-section-estimate}
    Consider the map $Y:G(n,k)\to \R^n$ defined by $Y(V) = y_V\in V$. Assume that $Y$ is $\mu$-measurable and
        \[
            \int_{G(n,k)}
                |y_V|^2
            \,d\mu(V)
            <
            \infty.
        \]
    Then we have
        \[
            \left|
                \int_{G(n,k)}
                    y_V
                \,d\mu(V)
            \right|^2
            \le
            \int_{G(n,k)}
                |y_V|^2
            \,d\mu(V).
        \]
\end{lemma}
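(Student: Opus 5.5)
The plan is a duality argument based on the isotropy identity \eqref{eq:sec1-grassmannian-isotropy}, using the fact that $y_V\in V$ gives $P_Vy_V=y_V$. Set
\[
    z:=\int_{G(n,k)}y_V\,d\mu(V)\in\R^n.
\]
This vector is well defined. Indeed, $\mu$ is finite with $\mu(G(n,k))=n/k$ by \eqref{eq:sec2-grassmannian-total-mass}, so Cauchy--Schwarz gives $\int|y_V|\,d\mu\le(n/k)^{1/2}(\int|y_V|^2\,d\mu)^{1/2}<\infty$. Hence $Y$ is Bochner (componentwise Lebesgue) integrable.

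If $z=0$ there is nothing to prove, so assume $z\neq0$. First rewrite $|z|^2$ by pairing $z$ with the integral and moving the projection onto $z$:
\[
    |z|^2
    =\int_{G(n,k)}\inner{y_V}{z}\,d\mu(V)
    =\int_{G(n,k)}\inner{P_Vy_V}{z}\,d\mu(V)
    =\int_{G(n,k)}\inner{y_V}{P_Vz}\,d\mu(V).
\]
The first equality is linearity of the integral and the last uses the symmetry of $P_V$.

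Next apply Cauchy--Schwarz, first pointwise in $\R^n$ and then in $L^2(\mu)$:
\[
    |z|^2
    \le\left(\int_{G(n,k)}|y_V|^2\,d\mu(V)\right)^{1/2}
    \left(\int_{G(n,k)}|P_Vz|^2\,d\mu(V)\right)^{1/2}.
\]
The second factor is computed exactly from isotropy: $|P_Vz|^2=\inner{P_Vz}{z}$ because $P_V^2=P_V=P_V^*$, so
\[
    \int_{G(n,k)}|P_Vz|^2\,d\mu(V)
    =\inner{\Bigl(\int_{G(n,k)}P_V\,d\mu(V)\Bigr)z}{z}
    =|z|^2.
\]
This is the same computation that shows the map $A$ is an isometry in the remark following Theorem~\ref{thm:sec1-compact-grassmannian-bl}. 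Dividing by $|z|>0$ and squaring then gives the claim.

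There is no serious obstacle. The only points needing care are measurability and integrability. The function $V\mapsto\inner{y_V}{P_Vz}$ is $\mu$-measurable, since $V\mapsto P_V$ is continuous, and it is integrable by the Cauchy--Schwarz bound above. Also, interchanging the inner product with the vector-valued integral is just linearity applied coordinatewise.
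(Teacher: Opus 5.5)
Your proof is correct and follows essentially the same route as the paper: pair $z$ with the integral, insert $P_V$ using $y_V\in V$, apply Cauchy--Schwarz in $L^2(\mu)$, and evaluate $\int_{G(n,k)}|P_Vz|^2\,d\mu(V)=|z|^2$ by isotropy. You also make explicit two points the paper leaves tacit, namely integrability of $Y$ via the finite total mass and the trivial case $z=0$ before dividing by $|z|$.
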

\begin{proof}
For convenience, we write 
    \(
        w
        =
        \int_{G(n,k)}
            y_V
        \,d\mu(V).
    \)
Then we apply the Cauchy--Schwarz inequality and compute
    \begin{align*}
        |w|^2
        &=
        \biginner{w}{\int_{G(n,k)} y_V \,d\mu(V)}\\
        &=
        \int_{G(n,k)}
            \inner{w}{y_V}
        \,d\mu(V)\\
        &=
        \int_{G(n,k)}
            \inner{w}{P_V^*P_V y_V}
        \,d\mu(V)\\
        &=
        \int_{G(n,k)}
            \inner{P_V w}{P_V y_V}
        \,d\mu(V)\\
        &\le
        \left(
            \int_{G(n,k)}
                |P_V w|^2
            \,d\mu(V)
        \right)^{1/2}
        \left(
            \int_{G(n,k)}
                |P_V y_V|^2
            \,d\mu(V)
        \right)^{1/2}.
    \end{align*}
On the other hand, the isotropy of $\mu$ gives
    \begin{align*}
        |w|^2
        &=
        \biginner{w}{\int_{G(n,k)} P_V w \,d\mu(V)}\\
        &=
        \int_{G(n,k)}
            \inner{w}{P_V w}
        \,d\mu(V)\\
        &=
        \int_{G(n,k)}
            \inner{P_V w}{P_V w}
        \,d\mu(V)\\
        &=
        \int_{G(n,k)}
            |P_V w|^2
        \,d\mu(V).
    \end{align*}
Comparing the above two computations, we have
    \[
        |w|
        \le
        \left(
            \int_{G(n,k)}
                |P_V y_V|^2
            \,d\mu(V)
        \right)^{1/2}.
    \]
Squaring both sides and noting $P_V y_V = y_V$ gives
    \[
        \left|
            \int_{G(n,k)}
                y_V
            \,d\mu(V)
        \right|^2
        \le
        \int_{G(n,k)}
            |y_V|^2
        \,d\mu(V).
    \]
\end{proof}

\begin{lemma}
    \label{lem:sec3-large-time-fiber-limit}
    Let $(f_V)$ be either input family in Theorem~\ref{thm:sec1-heat-semigroup-preservation}.
    For every fixed $x\in\R^n$,
	\begin{align}
	    &\lim_{t\to\infty}
	    \exp\left(
	        \int_{G(n,k)}
	            \log\left(
	                \int_V
	                    e^{-\frac{|P_Vx-y|^2}{4t}}f_V(y)
	                \,d\lambda_V(y)
	            \right)
	        \,d\mu(V)
	    \right)\notag\\
	    =\,&
	    \exp\left(
	        \int_{G(n,k)}
	            \log\left(
	                \int_Vf_V(y)\,d\lambda_V(y)
	            \right)
	        \,d\mu(V)
	    \right).
	    \label{eq:sec3-large-time-fiber-limit}
	\end{align}
\end{lemma}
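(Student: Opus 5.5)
The plan is to prove convergence of the exponents and then use continuity of $\exp$. I will show that for each $V$ the fiber integral
\[
J_t(V):=\int_V e^{-\frac{|P_Vx-y|^2}{4t}}f_V(y)\,d\lambda_V(y)
\]
converges to $J(V):=\int_Vf_V\,d\lambda_V$ as $t\to\infty$. I will then show that $\log J_t(V)$ is bounded uniformly in $V\in G(n,k)$ and $t\ge1$. Since $\mu(G(n,k))=n/k<\infty$, a constant bound is $\mu$-integrable, so dominated convergence gives $\int\log J_t\,d\mu\to\int\log J\,d\mu$. Measurability of $V\mapsto J_t(V)$ comes from Lemma~\ref{lem:sec3-measurabilities of mu-integrands}, via \eqref{eq:sec3-ambient-to-fiber-heat-flow}. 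Measurability of $V\mapsto J(V)$ then follows as a pointwise limit.

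\textbf{Pointwise limit.} For fixed $V$, the integrand increases to $f_V(y)$ as $t\uparrow\infty$. It is also dominated by $\mathcal F(|y|)$, which is $\lambda_V$-integrable by \eqref{eq:sec2-radial-majorant-fiber-integral}. Monotone or dominated convergence then gives $J_t(V)\to J(V)$.

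\textbf{Upper bound.} The same estimate gives
\[
J_t(V)\le|S^{k-1}|\int_0^\infty\mathcal F(r)r^{k-1}\,dr
\]
for all $t$ and $V$, which is finite and independent of $V$.

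\textbf{Lower bound.} This is the main point. Set $\alpha_m:=\inf\alpha>0$ in the compactly supported case, and $\alpha_m:=1$ in the strictly positive case. Let $F_m>0$ be the minimum of $F$ on the compact set $\{(V,y)\in T_{n,k}:|y|\le\alpha_m\}$, exactly as in the proof of Theorem~\ref{thm:sec1-compact-grassmannian-bl}. For $|y|\le\alpha_m$ and $t\ge1$ we have $|P_Vx-y|\le|x|+\alpha_m$, so
\[
J_t(V)\ge F_m\,e^{-(|x|+\alpha_m)^2/4}\,\lambda_{\R^k}\bigl(B_{\R^k}(0,\alpha_m)\bigr)>0,
\]
uniformly in $V$ and $t\ge1$.

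Combining the two bounds, $|\log J_t(V)|$ is bounded by a constant depending only on $x$. Dominated convergence with respect to $\mu$ then yields the limit of the exponents, and exponentiating gives \eqref{eq:sec3-large-time-fiber-limit}. The only delicate step is the uniform positive lower bound. It relies on compactness of the ball bundle and on continuity of $\alpha$ and $F$, and the Gaussian weight is controlled by restricting to $t\ge1$ for the fixed $x$.
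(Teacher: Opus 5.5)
Your proposal is correct and follows essentially the same route as the paper: dominated (or monotone) convergence of the fiber integrals for each fixed $V$, a uniform upper bound from the radial majorant, and a uniform positive lower bound for $t\ge1$ from the minimum of $F$ on the compact set $\{(V,y)\in T_{n,k}:|y|\le a\}$, followed by dominated convergence in $\mu$ and continuity of $\exp$. Your explicit justification of the $\mu$-measurability of $V\mapsto J_t(V)$ and $V\mapsto J(V)$ is a small addition that the paper's proof leaves implicit.
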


\begin{proof}
    Fix $V\in G(n,k)$.
    For every $y\in V$,
	\[
	    e^{-\frac{|P_Vx-y|^2}{4t}}f_V(y)
	    \longrightarrow
	    f_V(y)
	    \qquad\text{as }t\to\infty.
	\]
    These functions are Borel measurable and satisfy
	\[
	    0
	    \le
	    e^{-\frac{|P_Vx-y|^2}{4t}}f_V(y)
	    \le
	    \mathcal F(|y|).
	\]
    The right-hand side belongs to $L^1(V,\lambda_V)$ by our hypothesis, so the dominated convergence theorem gives pointwise convergence in $V$ of the fiber integrals.
    The same majorant gives the uniform upper bound
	\[
	    \int_V
	        e^{-\frac{|P_Vx-y|^2}{4t}}f_V(y)
	    \,d\lambda_V(y)
	    \le
	    |S^{k-1}|
	    \int_0^\infty
	        \mathcal F(r)r^{k-1}
	    \,dr.
	\]
    For the lower bound, choose $a>0$ such that $f_V(y)=F(V,y)$ whenever $|y|\le a$, uniformly in $V$.
    In the strictly positive case any fixed $a>0$ is admissible, while in the compactly supported case we may take $a=\min_V\alpha(V)>0$.
    Positivity and continuity on the compact set $\{(V,y)\in T_{n,k}:|y|\le a\}$ give a constant $m_a>0$ such that $F(V,y)\ge m_a$ there.
    Hence, for $t\ge1$,
	\begin{align*}
	    \int_V
	        e^{-\frac{|P_Vx-y|^2}{4t}}f_V(y)
	    \,d\lambda_V(y)
	    &\ge
	    e^{-\frac{(|x|+a)^2}{4}}
	    m_a\lambda_{\R^k}(B_{\R^k}(0,a))
	    >
	    0.
	\end{align*}
    Thus the logarithms are bounded uniformly in $V$ and $t\ge1$.
    Since $\mu$ is finite, the uniform constant bound is $\mu$-integrable, and dominated convergence proves convergence of the logarithmic integrals.
    Continuity of the exponential function then proves \eqref{eq:sec3-large-time-fiber-limit}.
\end{proof}

\section{A Mixed-Rank Continuous Brascamp--Lieb Inequality}
\label{sec:sec4-mixed-rank-bl}

The two methods also apply when the dimensions of the subspaces vary.
We replace a single Grassmannian by the finite disjoint union of all Grassmannians of positive-dimensional subspaces of $\R^n$.
We present the definitions, the inequality, and the two formal proofs.

\subsection{Definitions and the Main Result}
\label{subsec:sec4-mixed-rank-definitions}

Let
	\begin{equation}
    	\mathcal G_n
        :=
        \bigsqcup_{k=1}^n 
            G(n,k),
    	\qquad 
        V
        \in
        \mathcal G_n.
	\label{eq:sec4-mixed-grassmannian}
	\end{equation}
For $V\in\mathcal G_n$, let $P_V$ be the orthogonal projection onto $V$.
We equip $\mathcal G_n$ with the metric
	\begin{equation}
	   d(V,W)
       :=
       \|P_V-P_W\|_{\mathrm{op}}.
	\label{eq:sec4-mixed-grassmannian-metric}
	\end{equation}
If $\dim V\ne \dim W$, then $d(V,W)=1$.
Thus this metric induces the disjoint union topology, and each $G(n,k)$ is both open and closed in $\mathcal G_n$.
In particular, $\mathcal G_n$ is compact and $r$ is continuous.

The input functions are defined on
	\begin{equation}
    	\mathcal T_n
    	:=
        \bigsqcup_{k=1}^n 
            T_{n,k}
    	=
        \{(V,y)\in\mathcal G_n\times\R^n:y\in V\}.
	\label{eq:sec4-mixed-tautological-space}
	\end{equation}
We use the metric $d(V,W)+|y-z|$ between $(V,y)$ and $(W,z)$ in $\mathcal T_n$.
Continuity on $\mathcal T_n$ is therefore equivalent to continuity on each $T_{n,k}$.
We write $\lambda_V$ for the $\dim V$-dimensional Lebesgue measure on $V$ induced by its Euclidean structure.
We also set $B_V(0,a):=\{y\in V:|y|\le a\}$ for $a>0$ and $B_V(0,\infty):=V$.

A finite positive Borel measure $\mu$ on $\mathcal G_n$ is called isotropic if
	\begin{equation}
    	\int_{\mathcal G_n}
            P_V
        \,d\mu(V)
        =
        I_n.
	\label{eq:sec4-mixed-isotropy}
	\end{equation}
Write $\mu_k:=\mu|_{G(n,k)}$.
Taking traces gives
	\begin{equation}
    	\int_{\mathcal G_n}
            \dim V
        \,d\mu(V)
    	=
        \sum_{k=1}^n 
        k\mu_k(G(n,k))
    	=
        n.
	\label{eq:sec4-mixed-dimension-identity}
	\end{equation}
Consequently, $1\le\mu(\mathcal G_n)\le n$.
The measures $\mu_k$ need not be isotropic separately.

\begin{theorem}
	\label{thm:sec4-forward-mixed-rank-bl}
	Let $\mu$ be an isotropic measure on $\mathcal G_n$.
	Suppose that $F:\mathcal T_n\to(0,\infty)$ is continuous.
	For each $k\in\{1,\ldots,n\}$, assume that there is a nonnegative measurable function $\mathcal F_k$ such that
	\begin{equation}
    	\mathcal F_k
        \in 
        L^\infty([0,\infty)),
    	\qquad
    	\int_0^\infty
            \mathcal F_k(s)s^{k-1}
        \,ds
        <
        \infty,
    	\qquad
    	F(V,y)
        \le
        \mathcal F_k(|y|)
        \quad
        \text{for }(V,y)
        \in 
        T_{n,k}.
	\label{eq:sec4-mixed-radial-majorants}
	\end{equation}
	Let $\alpha$ be either a continuous function from $\mathcal G_n$ to $(0,\infty)$ or identically equal to $\infty$.
	Define
	\begin{equation}
	    f_V(y)
        :=
        \mathbf 1_{B_V(0,\alpha(V))}(y)F(V,y).
	\label{eq:sec4-mixed-input-family}
	\end{equation}
	Then
	\begin{equation}
    	\int_{\R^n}
    	    \exp
                \left(
                    \int_{\mathcal G_n}
                        \log 
                            f_V(P_Vx)
                    \,d\mu(V)
                \right)
        \,dx
    	\le
    	\exp
            \left(
    	        \int_{\mathcal G_n}
    	            \log
                        \left(
                            \int_V 
                                f_V(y)
                            \,d\lambda_V(y)
                        \right)
                \,d\mu(V)
    	   \right).
	\label{eq:sec4-forward-mixed-rank-bl}
	\end{equation}
	We use the conventions $\log 0=-\infty$ and $e^{-\infty}=0$.
	The inner integral on the left may equal $-\infty$, while the right-hand side is finite and strictly positive.
\end{theorem}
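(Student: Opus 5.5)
We follow the measure-approximation route of Section~\ref{sec:sec2-approximation-proofs}, first for $\alpha\equiv\infty$ and then passing to truncated inputs exactly as in Subsection~\ref{subsec:sec2-compact-input-families}. The first step is a mixed-rank analogue of Theorem~\ref{thm:sec2-grassmannian-isotropic-approximation}. Since $\mathcal G_n$ with the metric \eqref{eq:sec4-mixed-grassmannian-metric} is a compact metric space, Lemma~\ref{lem:sec2-compact-metric-discretization} supplies, for each $\varepsilon<1$, a Borel partition into cells of diameter at most $2\varepsilon$. Because distinct Grassmannians lie at distance $1$, each cell is contained in a single $G(n,k)$. On every cell of positive measure we apply Theorem~\ref{thm:sec2-tchakaloff-quadrature} to the span of $1$ and the entries $V\mapsto\inner{P_Ve_a}{e_b}$. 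This gives a discrete measure $\nu_\varepsilon=\sum_i c_i^\varepsilon\delta_{V_i^\varepsilon}$ with $c_i^\varepsilon>0$ and $\sum_i c_i^\varepsilon P_{V_i^\varepsilon}=I_n$. The oscillation estimate \eqref{eq:sec2-grassmannian-weak-error} then yields $\nu_\varepsilon\rightharpoonup\mu$, with the constant $\mu(\mathcal G_n)\le n$ in place of $n/k$.

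The second step applies the mixed-rank geometric Brascamp--Lieb inequality, which is the general geometric case \eqref{eq:sec1-geometric-condition} with $B_i=P_{V_i^\varepsilon}:\R^n\to V_i^\varepsilon$ and $\sum_ic_iB_i^*B_i=I_n$. For $F$ strictly positive and $\alpha\equiv\infty$, it gives the discrete analogue of \eqref{eq:sec2-discrete-equal-rank-bl}. To pass to the limit we need the analytic lemmas on $\mathcal G_n$:
\begin{itemize}
\item continuity of $V\mapsto\log F(V,P_Vx)$, by Lemma~\ref{lem:sec2-projected-input-continuity} on each component;
\item continuity and strict positivity of $J(V)=\int_VF(V,y)\,d\lambda_V(y)$, by Lemmas~\ref{lem:sec2-nearby-subspace-isometry} and \ref{lem:sec2-continuity-fiber-integral} applied on each clopen piece $G(n,k)$ with the majorant $\mathcal F_k$;
\item measurability of the left-hand integrand, by Lemma~\ref{lem:sec2-measurability-continuous-integrand}.
\end{itemize}
Since a function is continuous on a finite disjoint union exactly when it is continuous on each piece, these lemmas transfer without change. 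Fatou's lemma on the left and weak convergence on the right then give \eqref{eq:sec4-forward-mixed-rank-bl} for $\alpha\equiv\infty$.

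For continuous $\alpha$, we repeat the cutoff--regularization argument of Subsection~\ref{subsec:sec2-compact-input-families}. We set $F_{\delta,\eta}=\varphi_\delta F+\eta e^{-|y|^2}$, which is dominated on $T_{n,k}$ by $\psi_1(\alpha_M,|y|)\mathcal F_k(|y|)+e^{-|y|^2}$. We then let $\eta\to0$ and afterwards $\delta\to0$, using monotone convergence on the left and dominated convergence on the right. The needed uniform bounds on $\log\int_V F_{\delta,\eta}$ come from finitely many constants, one per $k$: a lower bound $F_m\,\omega_k\alpha_m^k$ and an upper bound built from $\mathcal F_k$. Their maximum and minimum over $k$ remain finite and positive, and $\mu$ is finite. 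The same bounds show that the right-hand side of \eqref{eq:sec4-forward-mixed-rank-bl} is finite and strictly positive.

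The main obstacle is the second step: we must invoke the mixed-rank discrete geometric inequality (Barthe, BCCT) rather than the equal-rank one, and check that the normalization works. In the equal-rank case the Lebesgue measures on the fibres and the weights $c_i$ combine correctly because of the dimension balance. Here the balance is $\sum_ic_i\dim V_i=n$, which is exactly what \eqref{eq:sec1-geometric-condition} encodes through traces, so no extra constant should appear. We also need each cell of the partition to lie in a single $G(n,k)$, so that Tchakaloff nodes never mix ranks; choosing $\varepsilon<1/2$ ensures this. As an alternative check, one could run the heat-flow argument of Section~\ref{sec:sec3-heat-semigroup-proof}. The factor $(4\pi t)^{-\dim V/2}$ cancels against $(4\pi t)^{-n/2}$ precisely by \eqref{eq:sec4-mixed-dimension-identity}, and Lemma~\ref{lem:sec3-isotropic-section-estimate} carries over verbatim to $\mathcal G_n$.
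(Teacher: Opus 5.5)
Your proposal is correct and follows the paper's measure-approximation proof essentially step by step. You use Tchakaloff quadrature on small cells of $\mathcal G_n$ to get exactly isotropic discrete measures converging weakly. You then apply the finite mixed-rank geometric Brascamp--Lieb inequality, with Fatou on the left and weak convergence on the right, for $\alpha\equiv\infty$. Finally you run the cutoff--regularization $F_{\delta,\eta}$ with rank-by-rank uniform bounds on the fibre integrals.

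The only difference is cosmetic: you partition $\mathcal G_n$ at once, with $\varepsilon<1$ forcing each cell into a single $G(n,k)$, rather than partitioning each $G(n,k)$ separately. Neither isotropy nor weak convergence actually needs the cells to avoid mixing ranks.
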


If $\mu$ is supported on a single $G(n,k)$, this gives the corresponding equal-rank inequality.
The constant $1$ is sharp in the strictly positive case.
Indeed, the choice $F(V,y)=e^{-\pi|y|^2}$ gives equality by \eqref{eq:sec4-mixed-isotropy}.

\subsection{Proof by Measure Approximation}
\label{subsec:sec4-mixed-measure-approximation}

We first construct discrete approximations that preserve isotropy and the mass on each Grassmannian.

\begin{theorem}
	\label{thm:sec4-mixed-isotropic-approximation}
	Let $\mu$ be an isotropic measure on $\mathcal G_n$.
	There are finite discrete isotropic measures $\nu_\varepsilon$ on $\mathcal G_n$ such that
	\begin{equation}
    	\nu_\varepsilon
        \rightharpoonup
        \mu,
    	\qquad
    	\nu_\varepsilon(G(n,k))
        =
        \mu(G(n,k)),
    	\qquad 
        1\le k\le n.
	\label{eq:sec4-mixed-approximation-properties}
	\end{equation}
\end{theorem}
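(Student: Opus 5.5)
The plan is to repeat the cellwise Tchakaloff construction from the proof of Theorem~\ref{thm:sec2-grassmannian-isotropic-approximation}, now on the compact metric space $\mathcal G_n$ with the metric \eqref{eq:sec4-mixed-grassmannian-metric}. Fix $0<\varepsilon<1/2$ and apply Lemma~\ref{lem:sec2-compact-metric-discretization} to $(\mathcal G_n,d)$ to obtain a Borel partition $\mathcal G_n=\bigsqcup_i U_i^\varepsilon$ with $U_i^\varepsilon\subset\overline B_d(V_i^\varepsilon,\varepsilon)$.

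The first step is to check that each cell lies in a single Grassmannian. Since $d(V,W)=1$ whenever $\dim V\ne\dim W$, the radius $\varepsilon<1/2$ forces every cell $U_i^\varepsilon$ to be contained in one $G(n,k)$. Alternatively, one can partition each $G(n,k)$ separately and take the union of the resulting partitions.

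Next, on each cell of positive $\mu$-measure, apply Theorem~\ref{thm:sec2-tchakaloff-quadrature} to $\sigma=\mu|_{U_i^\varepsilon}$. The function space is
\[
X=\operatorname{span}\bigl(\{1\}\cup\{V\mapsto\inner{P_Ve_a}{e_b}:1\le a\le b\le n\}\bigr),
\]
restricted to the cell; these functions are bounded and Borel. This yields nodes $V_{i,\ell}^\varepsilon\in U_i^\varepsilon$ and weights $c_{i,\ell}^\varepsilon\ge0$ satisfying the analogues of \eqref{eq:sec2-local-mass-exactness} and \eqref{eq:sec2-local-projection-exactness}. Set
\[
\nu_\varepsilon=\sum_{i,\ell}c_{i,\ell}^\varepsilon\delta_{V_{i,\ell}^\varepsilon}.
\]

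Three properties then need to be verified.

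\emph{Isotropy.} Summing the local projection identities over all cells and using \eqref{eq:sec4-mixed-isotropy} gives $\int P_V\,d\nu_\varepsilon=I_n$.

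\emph{Mass on each Grassmannian.} Each cell lies in one $G(n,k)$, and so do its nodes. Summing the local mass identities over the cells contained in $G(n,k)$ therefore gives $\nu_\varepsilon(G(n,k))=\mu(G(n,k))$.

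\emph{Weak convergence.} Fix $g\in C(\mathcal G_n)$. Because the local masses agree, the same oscillation bound as in \eqref{eq:sec2-grassmannian-weak-error} applies, giving
\[
\Bigl|\int g\,d\mu-\int g\,d\nu_\varepsilon\Bigr|\le\mu(\mathcal G_n)\,\omega_g(2\varepsilon).
\]
Here $\mu(\mathcal G_n)\le n$ by \eqref{eq:sec4-mixed-dimension-identity}, and $\omega_g(2\varepsilon)\to0$ by uniform continuity of $g$ on the compact space $\mathcal G_n$.

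I do not expect a real obstacle. The only point needing care is the one raised in the first step: cells, and hence quadrature nodes, must not straddle different ranks, since otherwise the rank-wise mass identity could fail. The choice $\varepsilon<1/2$, together with the fact that $G(n,k)$ is clopen in $\mathcal G_n$, settles this.
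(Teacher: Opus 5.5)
Your proposal is correct and follows the paper's proof essentially step for step. The paper partitions each $G(n,k)$ separately into cells of diameter at most $2\varepsilon$, which is your stated alternative. It then applies the Tchakaloff theorem cellwise to the constant function and the matrix entries of $P_V$, and checks isotropy, rank-wise mass preservation, and weak convergence via the same oscillation bound $\mu(\mathcal G_n)\,\omega_g(2\varepsilon)$. Your other route, applying Lemma~\ref{lem:sec2-compact-metric-discretization} directly to $\mathcal G_n$ with $\varepsilon<1/2$ and using $d(V,W)=1$ across ranks to keep each cell inside one Grassmannian, is an equally valid way to get the rank-respecting partition.
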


\begin{proof}
	Fix $\varepsilon>0$.
	For each $k$, choose a finite Borel partition
	\begin{equation}
	    G(n,k)
        =
        \bigsqcup_i 
            U_{k,i}^\varepsilon,
	\label{eq:sec4-mixed-small-cell-partitions}
	\end{equation}
    such that the diameter $\operatorname{diam}_d U_{k,i}^\varepsilon\le2\varepsilon$.
	Let $\{e_1,\ldots,e_n\}$ be the standard basis of $\R^n$ and set
	\[
    	X
        :=
        \operatorname{span}
            \left(
            	\{1\}
                \cup
            	\{V\mapsto\langle P_Ve_a,e_b\rangle:1\le a\le b\le n\}
    	    \right).
	\]
	Apply Theorem~\ref{thm:sec2-tchakaloff-quadrature} to the restrictions of these functions on every cell of positive $\mu$-measure.
	This gives nodes $V_{k,i,\ell}^\varepsilon\in U_{k,i}^\varepsilon$ and nonnegative weights $c_{k,i,\ell}^\varepsilon$ such that
	\begin{align}
    	\sum_\ell 
            c_{k,i,\ell}^\varepsilon
    	&=
        \mu(U_{k,i}^\varepsilon),
    \label{eq:sec4-mixed-local-mass}\\
    	\sum_\ell 
            c_{k,i,\ell}^\varepsilon 
            P_{V_{k,i,\ell}^\varepsilon}
    	&=
        \int_{U_{k,i}^\varepsilon}
            P_V
        \,d\mu(V).
	\label{eq:sec4-mixed-local-projections}
	\end{align}
	The number of nodes on each cell is at most $1+n(n+1)/2$.
	Cells of zero measure are omitted.
	Define
	\begin{equation}
    	\nu_\varepsilon
    	:=
        \sum_{k,i,\ell}
            c_{k,i,\ell}^\varepsilon
            \delta_{V_{k,i,\ell}^\varepsilon}.
	\label{eq:sec4-mixed-discrete-measure}
	\end{equation}
	Summing \eqref{eq:sec4-mixed-local-projections} gives
	\begin{equation}
    	\int_{\mathcal G_n}
            P_V
        \,d\nu_\varepsilon(V)
        =
        I_n.
	\label{eq:sec4-mixed-discrete-isotropy}
	\end{equation}
	Summing \eqref{eq:sec4-mixed-local-mass} over the cells in $G(n,k)$ gives $\nu_\varepsilon(G(n,k)) = \mu (G(n,k))$.

	For $g\in C(\mathcal G_n)$, let $\omega_g$ denote its modulus of continuity with respect to $d$.
	The nodes lie in their respective cells, and the total weight on each cell equals its mass.
	Therefore,
	\begin{align}
    	\left|
            \int_{\mathcal G_n}
                g
            \,d\nu_\varepsilon
            -
            \int_{\mathcal G_n}
                g
            \,d\mu\right|
    	&\le
        \sum_{k,i}
            \mu(U_{k,i}^\varepsilon)
            \operatorname{osc}_{U_{k,i}^\varepsilon}
                g
        \notag\\
    	&\le
        \mu(\mathcal G_n)
        \omega_g(2\varepsilon)
    	\longrightarrow
        0.
	\label{eq:sec4-mixed-weak-error}
	\end{align}
	Here $\operatorname{osc}_U g:=\sup_U g-\inf_U g$.
	This proves weak convergence.
\end{proof}

\begin{proof}[Proof of Theorem~\ref{thm:sec4-forward-mixed-rank-bl} by measure approximation]
	First suppose that $\alpha\equiv\infty$, so that $f_V(y)=F(V,y)$.
	After discarding zero weights, write the approximating measure as $\nu_\varepsilon=\sum_j c_j^\varepsilon\delta_{V_j^\varepsilon}$.
	Its isotropy identity is
	\[
	    \sum_j 
            c_j^\varepsilon P_{V_j^\varepsilon}
        =
        I_n.
	\]
	The finite geometric Brascamp--Lieb inequality applies to subspaces of different dimensions; see \cite{BCCT}.
	It gives
	\begin{equation}
    	\int_{\R^n}
    	    \exp
                \left(
                    \int_{\mathcal G_n}
                        \log 
                            F(V,P_Vx)
                    \,d\nu_\varepsilon(V)
                \right)
        \,dx
    	\le
    	\exp
            \left(
                \int_{\mathcal G_n}
                    \log 
                        \Big( 
                            \int_V
                                F(V,y)
                            \,d\lambda_V(y) 
                        \Big)
                \,d\nu_\varepsilon(V)
            \right).
	\label{eq:sec4-mixed-discrete-bl}
	\end{equation}
	For fixed $x$, the function $V\mapsto\log F(V,P_Vx)$ is continuous on $\mathcal G_n$.
	The continuity of $F$ and the radial bounds also imply that $V\mapsto \int_VF(V,y)\,d\lambda_V(y)$ is continuous and strictly positive on each $G(n,k)$.
	Thus $\log M_F$ is continuous on $\mathcal G_n$.
	Weak convergence gives the limits of both inner integrals in \eqref{eq:sec4-mixed-discrete-bl}.
	Fatou's lemma on the left then proves \eqref{eq:sec4-forward-mixed-rank-bl} for strictly positive inputs.

	Now suppose that $\alpha:\mathcal G_n\to(0,\infty)$ is continuous.
	For $\delta>0$, define
	\begin{equation}
	    \psi_\delta(s,a)
        :=
        \begin{cases}
        1, & s\le a,\\
        1-(s-a)/\delta, & a<s<a+\delta,\\
        0, & s\ge a+\delta.
        \end{cases}
	\label{eq:sec4-mixed-outer-cutoff}
	\end{equation}
	For $\delta,\eta>0$, set
	\begin{equation}
    	F_{\delta,\eta}(V,y)
    	:=
        \psi_\delta(|y|,\alpha(V))F(V,y)
        +
        \eta e^{-|y|^2}.
	\label{eq:sec4-mixed-positive-approximation}
	\end{equation}
	These functions are continuous and strictly positive on $\mathcal T_n$.
	On $T_{n,k}$, they are dominated by $\mathcal F_k(|y|)+\eta e^{-|y|^2}$.
	The strictly positive case therefore applies to each $F_{\delta,\eta}$.
	Let $F_{\delta,0}(V,y):=\psi_\delta(|y|,\alpha(V))F(V,y)$.
	We have the decreasing limits
	\[
    	F_{\delta,\eta}(V,y)
        \downarrow 
        F_{\delta,0}(V,y)
        \quad\text{as }\eta\downarrow 0,
    	\qquad
    	F_{\delta,0}(V,y)
        \downarrow 
        f_V(y)
        \quad\text{as }\delta\downarrow 0.
	\]
	Moreover, there are constants $0<a\le b<\infty$ such that
	\[
    	a
        \le
        \int_V
            F_{\delta,\eta}(V,y)
        \,d\lambda_V(y)
        \le 
        b,
    	\qquad 
        V
        \in
        \mathcal G_n,
        \quad 0<\delta,\eta\le1.
	\]
	The upper bound follows from \eqref{eq:sec4-mixed-radial-majorants}.
	For the lower bound, use $\min_{\mathcal G_n}\alpha>0$ and the positive minimum of $F$ on the corresponding compact part of $\mathcal T_n$.
	Since there are only finitely many possible dimensions, both bounds are uniform in $V$.
	Pass first to the limit $\eta\downarrow0$ and then to the limit $\delta\downarrow0$.
	Fatou's lemma applies on the left, and the uniform bounds give convergence of the logarithmic fiber integrals on the right.
	This proves \eqref{eq:sec4-forward-mixed-rank-bl} for compactly supported inputs.
\end{proof}

\subsection{Proof by Heat-Semigroup Preservation}
\label{subsec:sec4-mixed-heat-semigroup}

Throughout this subsection, $e^{t\Delta}$ denotes the heat semigroup on $\R^n$.
For $V\in\mathcal G_n$, its action on a projected input is
	\begin{equation}
    	[e^{t\Delta}(f_V\circ P_V)](x)
    	=
        (4\pi t)^{-\dim V/2}
    	\int_V
            \exp
                \left(
                    -\frac{|P_Vx-y|^2}{4t}
                \right)
                f_V(y)
        \,d\lambda_V(y).
	\label{eq:sec4-mixed-fiber-heat-flow}
	\end{equation}
Thus the dimension enters through the heat-kernel normalization on each fiber.

\begin{theorem}
	\label{thm:sec4-mixed-heat-semigroup-preservation}
	Let $\mu$ and $(f_V)_{V\in\mathcal G_n}$ satisfy the assumptions of Theorem~\ref{thm:sec4-forward-mixed-rank-bl}.
	Let $h:\R^n\to[0,\infty)$ be bounded, Lebesgue measurable, and compactly supported.
	Assume that
	\begin{equation}
    	h(x)
        \le
        \exp
            \left(\int_{\mathcal G_n}\log f_V(P_Vx)\,d\mu(V)\right)
    	\qquad\text{for every }x\in\R^n.
	\label{eq:sec4-mixed-heat-initial}
	\end{equation}
	Then, for every $t>0$ and $x\in\R^n$,
	\begin{equation}
    	[e^{t\Delta}h](x)
    	\le
        \exp
        \left(
    	    \int_{\mathcal G_n}
    	        \log
                    \left(
                        [e^{t\Delta}(f_V\circ P_V)](x)
                    \right)
            \,d\mu(V)
    	\right).
	\label{eq:sec4-mixed-heat-positive-time}
	\end{equation}
\end{theorem}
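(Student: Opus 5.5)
The plan is to repeat the proof of Theorem~\ref{thm:sec1-heat-semigroup-preservation} from Section~\ref{sec:sec3-heat-semigroup-proof}, with $G(n,k)$ replaced by $\mathcal G_n$ and every occurrence of $k$ replaced by $\dim V$. We first treat the compactly supported case, $\alpha$ continuous and finite. Assume $h\not\equiv0$, so that $e^{t\Delta}h>0$. Set $H(t,x):=\log[e^{t\Delta}h](x)$ and, for $\delta>0$,
\[
F_V^\delta(t,x):=\log\bigl([e^{t\Delta}(f_V\circ P_V)](x)+\delta\bigr),
\qquad
C^\delta:=\int_{\mathcal G_n}F_V^\delta\,d\mu-H.
\]
Both $e^H$ and $e^{F_V^\delta}$ solve the heat equation on $\R^n$, so \eqref{eq:sec3-logarithmic-heat-evolution} holds and $\partial_tC^\delta$ splits as in \eqref{eq:sec3-comparison-evolution}. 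At a point where \eqref{eq:sec1-minimum-point-conditions} holds, the Laplacian part is $\Delta C^\delta\ge0$. By \eqref{eq:sec4-mixed-fiber-heat-flow}, $F_V^\delta(t,\cdot)$ depends only on $P_Vx$, so $\nabla F_V^\delta\in V$. The gradient part is then nonnegative by Lemma~\ref{lem:sec3-isotropic-section-estimate}, whose proof only uses $y_V=P_Vy_V$ and $\int P_V\,d\mu=I_n$. It therefore holds verbatim on $\mathcal G_n$ with the mixed isotropy \eqref{eq:sec4-mixed-isotropy}. This gives hypothesis (i) of Lemma~\ref{lem:sec1-barthe-huet-zhang-maximum-principle}.

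For hypothesis (ii), we need two-sided bounds on $F_V^\delta$. The lower bound $F_V^\delta\ge\log\delta$ holds because $e^{t\Delta}$ fixes constants. The upper bound $F_V^\delta\le\log(\max_k\|\mathcal F_k\|_\infty+\delta)$ comes from \eqref{eq:sec4-mixed-fiber-heat-flow}, since the fiber heat kernel has unit mass in every dimension. Because $\mu(\mathcal G_n)\le n$ is finite, these bounds combine with $\sup_{t}[e^{t\Delta}h](x)\to0$ as $|x|\to\infty$ (valid since $h$ is bounded and compactly supported) to give $C^\delta\ge0$ outside a large ball, uniformly in $t$. Hypothesis (iii) follows from \eqref{eq:sec4-mixed-heat-initial} together with $f_V+\delta\ge f_V$, exactly as before. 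Lemma~\ref{lem:sec1-barthe-huet-zhang-maximum-principle} then gives $C^\delta\ge0$. Letting $\delta\downarrow0$ (a monotone limit inside the log-integral) yields \eqref{eq:sec4-mixed-heat-positive-time}. The case $\alpha\equiv\infty$ is reduced to $\alpha\equiv R$, with $\operatorname{supp}h\subset B(0,R)$, by the same monotonicity argument as in Subsection~\ref{subsec:sec3-formal-proof}.

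The main obstacle is the technical step: measurability in $V$, and the interchange of $\partial_t$, $\nabla$ and $\Hess$ with $\int_{\mathcal G_n}\,d\mu$, i.e.\ the analogues of Lemmas~\ref{lem:sec3-measurabilities of mu-integrands}--\ref{lem:sec3-swipe laplacian with mu integral}. Measurability uses the same Tonelli argument, since $(V,z)\mapsto f_V(P_Vz)$ is Borel on $\mathcal G_n\times\R^n$. For the dominating functions, the numerators are bounded exactly as before using the ambient kernel $\gamma^n_t$; its $L^1$ norms of derivatives do not see $\dim V$. For the denominators, $u_V$ needs a positive lower bound uniform in $V$ and in $(\tau,x)$ on compacts. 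We would argue on each of the finitely many clopen pieces $G(n,k)$ separately, using $\alpha_m:=\min_{\mathcal G_n}\alpha>0$, the positive minimum of $F$ on the compact set $\{(V,y)\in\mathcal T_n:|y|\le\alpha_m\}$, and compactness of the relevant parameter sets; we then take the minimum over $k$. Since there are only finitely many ranks, the resulting dominating functions are constants in $V$ and hence $\mu$-integrable, and dominated convergence completes the interchange.
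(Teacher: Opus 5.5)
\textbf{The gap is hypothesis (iii).} Your hypotheses (i) and (ii) are fine. But hypothesis (iii) of Lemma~\ref{lem:sec1-barthe-huet-zhang-maximum-principle} does not follow ``exactly as before'' for the family you actually feed into the maximum principle. That family is the compactly supported one, $f_V=\mathbf 1_{B_V(0,\alpha(V))}F(V,\cdot)$, which is discontinuous on $\{|y|=\alpha(V)\}$.

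Fix $x_0$ and put $\Sigma:=\{V:|P_Vx_0|=\alpha(V)\}$. For $V\in\Sigma$, the heat flow $u_V(t,x)$ has no limit as $(t,x)\to(0,x_0)$. If $P_Vx$ approaches the sphere from outside with $\sqrt t\ll|P_Vx|-\alpha(V)$, then $u_V\to0$, so on $\Sigma$ only $\log\delta$ survives in $\int\log(u_V+\delta)\,d\mu$. On the other hand, take $h=g:=\exp\int\log f_V(P_V\cdot)\,d\mu$ and let $x$ approach from inside. Then $e^{t\Delta}h(x)$ can tend to $g(x_0)$, whose logarithm contains $\int_\Sigma\log F(V,P_Vx_0)\,d\mu$. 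This situation, $\mu(\Sigma)>0$ and $g(x_0)>0$, occurs for instance at every boundary point of $\{g>0\}$ when $\mu$ is discrete. There the pointwise comparison $h\le g$ together with $f_V+\delta\ge f_V$ gives no lower bound on $\liminf C^\delta$ once $\delta$ is small.

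Tracking joint parabolic paths $x=x_0+\sqrt t\,w$ shows what (iii) really requires as $\delta\downarrow0$. In the discrete case it is
\[
\gamma(\mathrm{cone}-w)\le\prod_{i\ \mathrm{active}}\Phi_i(w)^{c_i}.
\]
Here $\gamma$ is the time-one heat-kernel measure, the cone is the tangent cone of $\{g>0\}$ at $x_0$, and $\Phi_i(w)$ is the $\gamma$-measure of the correspondingly shifted tangent half-space of the $i$-th active constraint. This is a local Gaussian Brascamp--Lieb inequality, essentially the result being proved, so (iii) cannot be verified this way. Since you deduce the case $\alpha\equiv\infty$ from the compactly supported case by truncation, your whole argument rests on this step. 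By contrast, the interchange lemmas you single out as the main obstacle are easy for $C^\delta$, because its denominators satisfy $u_V+\delta\ge\delta$.

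\textbf{How the paper avoids it.} The paper's proof reverses your order and applies the maximum principle only to the strictly positive continuous family $f_V=F(V,\cdot)$. There (iii) is checked through the bounded continuous majorant $g_\delta(x):=\exp\int\log(F(V,P_Vx)+\delta)\,d\mu(V)$:
\begin{itemize}
\item Since $h\le g_\delta$ and the heat flow preserves order, $H\le\log e^{t\Delta}g_\delta\to\log g_\delta(x_0)$. This order-preservation step is needed even here, because $h$ is only measurable.
\item Meanwhile $\int\log(u_V+\delta)\,d\mu\to\log g_\delta(x_0)$ by dominated convergence, because each $F(V,\cdot)$ is bounded and continuous.
\end{itemize}
The compactly supported case then follows by applying the continuous case to the majorants $F_{\delta,\eta}=\psi_\delta(|y|,\alpha(V))F+\eta e^{-|y|^2}\ge f_V$ of \eqref{eq:sec4-mixed-positive-approximation}. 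The same $h$ is admissible for these. One then lets $\eta\downarrow0$ and then $\delta\downarrow0$ inside the heat flows. Passing the limit through the logarithmic average uses a uniform positive lower bound on $[e^{t\Delta}(f_V\circ P_V)](x)$, which comes from $\min F>0$ on $\{(V,y)\in\mathcal T_n:|y|\le\min\alpha\}$. In this order your truncation step becomes unnecessary.
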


\begin{lemma}
	\label{lem:sec4-mixed-isotropic-section-estimate}
	Let $\mu$ be isotropic on $\mathcal G_n$.
	For every measurable section $V\mapsto a_V\in V$ with $\int_{\mathcal G_n}|a_V|^2\,d\mu(V)<\infty$, we have
	\begin{equation}
    	\left|
            \int_{\mathcal G_n}
                a_V
            \,d\mu(V)
        \right|^2
    	\le
        \int_{\mathcal G_n}
            |a_V|^2
        \,d\mu(V).
	\label{eq:sec4-mixed-section-estimate}
	\end{equation}
\end{lemma}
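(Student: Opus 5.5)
The plan is to repeat the argument of Lemma~\ref{lem:sec3-isotropic-section-estimate}, with $\mathcal G_n$ in place of $G(n,k)$; only the isotropy identity \eqref{eq:sec4-mixed-isotropy} and the pointwise fact $P_Va_V=a_V$ are needed. The varying dimension plays no role.

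First I will check that the vector $w:=\int_{\mathcal G_n}a_V\,d\mu(V)$ is well defined. Since $\mu$ is finite (indeed $\mu(\mathcal G_n)\le n$ by \eqref{eq:sec4-mixed-dimension-identity}), Cauchy--Schwarz in $L^2(\mu)$ gives
\[
\int_{\mathcal G_n}|a_V|\,d\mu(V)\le\mu(\mathcal G_n)^{1/2}\Bigl(\int_{\mathcal G_n}|a_V|^2\,d\mu(V)\Bigr)^{1/2}<\infty.
\]
So each coordinate of $a_V$ is $\mu$-integrable, and $w\in\R^n$ is defined componentwise.

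Next I write $|w|^2=\int\langle w,a_V\rangle\,d\mu(V)$, moving the inner product with the fixed vector $w$ inside the integral by linearity. Because $a_V\in V$ and $P_V$ is self-adjoint, $\langle w,a_V\rangle=\langle P_Vw,a_V\rangle$. Cauchy--Schwarz in $L^2(\mu;\R^n)$ then gives
\[
|w|^2\le\Bigl(\int_{\mathcal G_n}|P_Vw|^2\,d\mu(V)\Bigr)^{1/2}\Bigl(\int_{\mathcal G_n}|a_V|^2\,d\mu(V)\Bigr)^{1/2}.
\]

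Then I use \eqref{eq:sec4-mixed-isotropy}. Since $P_V^2=P_V$,
\[
\int_{\mathcal G_n}|P_Vw|^2\,d\mu(V)=\Bigl\langle\Bigl(\int_{\mathcal G_n}P_V\,d\mu(V)\Bigr)w,w\Bigr\rangle=|w|^2.
\]
Substituting this into the previous bound gives $|w|^2\le|w|\bigl(\int|a_V|^2\,d\mu\bigr)^{1/2}$. If $w=0$ there is nothing to prove; otherwise I divide by $|w|$ and square to obtain \eqref{eq:sec4-mixed-section-estimate}.

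There is no real obstacle. The only point needing care is the integrability of $V\mapsto P_Vw$ and of $V\mapsto\langle P_Vw,a_V\rangle$. The first holds because $V\mapsto P_V$ is continuous on the compact space $\mathcal G_n$ and hence bounded and Borel. The second then follows from $|\langle P_Vw,a_V\rangle|\le|w|\,|a_V|$ and the $L^1(\mu)$ bound on $|a_V|$ established in the first step.
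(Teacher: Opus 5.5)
Your proof is correct and is essentially the paper's argument. The paper pairs $\int_{\mathcal G_n}a_V\,d\mu(V)$ with an arbitrary unit vector $z$, uses $\langle a_V,z\rangle=\langle a_V,P_Vz\rangle$, Cauchy--Schwarz and isotropy, and then takes the supremum over $z$; you pair with $z=w$ itself, exactly as in the equal-rank Lemma~\ref{lem:sec3-isotropic-section-estimate}, and you additionally verify the integrability that the paper leaves implicit.
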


\begin{proof}
	For $z\in\R^n$, use $a_V\in V$, Cauchy--Schwarz, and isotropy to obtain
	\begin{align*}
    	\left|
            \left\langle
                \int_{\mathcal G_n}
                    a_V
                \,d\mu(V)
                ,
                z
            \right\rangle
        \right|
    	&=
        \left|
            \int_{\mathcal G_n}
                \langle a_V,P_Vz\rangle
            \,d\mu(V)
        \right|\\
    	&\le
        \left(
            \int_{\mathcal G_n}
                |a_V|^2
            \,d\mu(V)
        \right)^{1/2}
    	\left(
            \int_{\mathcal G_n}
                |P_Vz|^2
            \,d\mu(V)
        \right)^{1/2}\\
    	&=
        \left(
            \int_{\mathcal G_n}
                |a_V|^2
            \,d\mu(V)
        \right)^{1/2}
        |z|.
	\end{align*}
	Taking the supremum over $|z|=1$ proves the claim.
\end{proof}

\begin{proof}[Proof of Theorem~\ref{thm:sec4-mixed-heat-semigroup-preservation}]
	We first consider the strictly positive family $f_V(y)=F(V,y)$.
	If $h=0$ almost everywhere, the conclusion is immediate.
	Otherwise, $e^{t\Delta}h$ is strictly positive for $t>0$.
	Set
	\begin{equation}
    	H(t,x)
        :=
        \log
            [e^{t\Delta}h](x),
    	\qquad
    	F_V(t,x)
        :=
        \log
            [e^{t\Delta}(f_V\circ P_V)](x),
	\label{eq:sec4-mixed-logarithmic-flows}
	\end{equation}
	and define
	\begin{equation}
	    C(t,x)
        :=
        \int_{\mathcal G_n}
            F_V(t,x)
        \,d\mu(V)
        -
        H(t,x).
	\label{eq:sec4-mixed-comparison-function}
	\end{equation}
	The desired inequality is equivalent to $C(t,x)\ge0$.
	In the following calculation, all spatial derivatives act on $\R^n$.
	Both $e^H$ and $e^{F_V}$ solve the ambient heat equation, so
	\begin{equation}
    	\partial_t H
        =
        \Delta H
        +
        |\nabla H|^2,
    	\qquad
    	\partial_t F_V
        =
        \Delta F_V
        +
        |\nabla F_V|^2.
	\label{eq:sec4-mixed-logarithmic-evolution}
	\end{equation}
	Consequently,
	\begin{equation}
    	\partial_tC
    	=
        \int_{\mathcal{ G }_n}
            \Delta F_V
        \,d\mu(V)
        -
        \Delta H
        +
        \int_{\mathcal G_n}
            |\nabla F_V|^2
        \,d\mu(V)
        -
        |\nabla H|^2.
	\label{eq:sec4-mixed-comparison-evolution}
	\end{equation}

	Suppose that, at a point $(t,x)$,
	\[
    	C(t,x)
        \le
        0,
    	\qquad 
        \nabla C(t,x)=0,
    	\qquad 
        \Hess C(t,x)\succeq0.
	\]
	Then
	\begin{equation}
    	\nabla H(t,x)
        =
        \int_{\mathcal G_n}
            \nabla F_V(t,x)
        \,d\mu(V),
    	\qquad
    	\Delta C(t,x)
        \ge
        0.
	\label{eq:sec4-mixed-minimum-identities}
	\end{equation}
	By \eqref{eq:sec4-mixed-fiber-heat-flow}, $F_V(t,x)$ depends only on $P_Vx$, and hence $\nabla F_V(t,x)\in V$.
	Lemma~\ref{lem:sec4-mixed-isotropic-section-estimate} gives
	\begin{equation}
    	|\nabla H(t,x)|^2
    	=
        \left|
            \int_{\mathcal G_n}
                \nabla F_V(t,x)
            \,d\mu(V)
        \right|^2
    	\le
        \int_{\mathcal G_n}
            |\nabla F_V(t,x)|^2
        \,d\mu(V).
	\label{eq:sec4-mixed-critical-gradient}
	\end{equation}
	Both terms on the right of \eqref{eq:sec4-mixed-comparison-evolution} are therefore nonnegative.
	Thus $\partial_tC(t,x)\ge0$ at every such point.

	To apply Lemma~\ref{lem:sec1-barthe-huet-zhang-maximum-principle}, introduce the positive regularization $f_V^\delta:=f_V+\delta$, where $\delta>0$.
	Since the heat semigroup preserves constants, the corresponding comparison function is
	\begin{equation}
    	C^\delta(t,x)
    	:=
        \int_{\mathcal G_n}
    	    \log
                \left(
                    [e^{t\Delta}(f_V\circ P_V)](x)+\delta
                \right)
        \,d\mu(V)
        -
        H(t,x).
	\label{eq:sec4-mixed-regularized-comparison}
	\end{equation}
	The preceding differential calculation also applies to $C^\delta$.
	Moreover,
	\[
	    C^\delta(t,x)
        \ge
        \mu(\mathcal G_n)\log\delta
        -
        H(t,x).
	\]
	The compact support and boundedness of $h$ give
	\[
	    \lim_{|x|\to\infty}\sup_{t>0}
            [e^{t\Delta}h](x)
        =0.
	\]
	It follows that $C^\delta$ satisfies the spatial boundary condition in the maximum principle.

	For the initial condition, consider the function
    	\[
    	    x
            \mapsto 
            \exp
                \left(
                    \int_{\mathcal G_n}
                        \log(f_V(P_Vx)+\delta)
                    \,d\mu(V)
                \right).
    	\]
    This function is continuous in $x$.
	This function is bounded and continuous, and \eqref{eq:sec4-mixed-heat-initial} gives
        \[
            h(x)
            \le
            \exp
                \left(
                    \int_{\mathcal G_n}
                        \log(f_V(P_Vx)+\delta)
                    \,d\mu(V)
                \right).
        \]
    Since heat flow preserves the inequality, we have
        \begin{align*}
            C^{\delta}(t,x)
            &\ge
                \int_{\mathcal G_n}
                    \log(
                        [e^{t\Delta}
                            f_V\circ P_V](x)
                        +
                        \delta
                        )
                \,d\mu(V)
            -
            \log
            \left(
            e^{t\Delta}
                \left[
                    \exp
                        \left(
                            \int_{\mathcal G_n}
                                \log
                                    (
                                        f_V\circ P_V(\cdot))
                                        +
                                        \delta
                                    )
                            \,d\mu(V)
                        \right)
                \right]
            (x)
            \right)
        \end{align*}
    The initial convergence of the second part gives
        \begin{align*}
            &\liminf_{t\to 0, x\to x_0}
            C^{\delta}(t,x)\\
            \ge\,&
            \liminf_{t\to 0, x\to x_0}
                \int_{\mathcal G_n}
                    \log(
                        [e^{t\Delta}
                            f_V\circ P_V](x)
                        +
                        \delta
                        )
                \,d\mu(V)
            -
                \left(
                    \int_{\mathcal G_n}
                        \log(f_V\circ P_V (x_0)+\delta)
                    \,d\mu(V)
                \right)
        \end{align*}
    The dominated convergence theorem applies to the first term, we obtain
            \begin{align*}
                &\liminf_{t\to 0, x\to x_0}
                C^{\delta}(t,x)\\
                \ge\,&
                \left(
                    \int_{\mathcal G_n}
                        \log(f_V\circ P_V (x_0)+\delta)
                    \,d\mu(V)
                \right)
                -
                \left(
                    \int_{\mathcal G_n}
                        \log(f_V\circ P_V (x_0)+\delta)
                    \,d\mu(V)
                \right)\\
                =\,&
                0.
        \end{align*}
	The maximum principle now gives $C^\delta(t,x)\ge0$ for every $t>0$ and $x\in\R^n$.
	Letting $\delta\downarrow0$ proves \eqref{eq:sec4-mixed-heat-positive-time} for strictly positive inputs.

	For compactly supported inputs, use the functions $F_{\delta,\eta}$ from \eqref{eq:sec4-mixed-positive-approximation}.
	They satisfy $F_{\delta,\eta}(V,y)\ge f_V(y)$, so the same function $h$ satisfies the required initial comparison.
	Apply the strictly positive case to these families.
	Let $\eta\downarrow0$ and then $\delta\downarrow0$ in their heat flows and logarithmic averages.
	The limit is \eqref{eq:sec4-mixed-heat-positive-time} for the original compactly supported family.
\end{proof}

\begin{proof}[Proof of Theorem~\ref{thm:sec4-forward-mixed-rank-bl} by heat-semigroup preservation]
	Choose
        \[
            h(x)
            =
            \exp
                \left(
                    \int_{\mathcal G_n}
                        \log f_V(P_Vx)
                    \,d\mu(V)
                \right),
            \qquad
            h_R(x)
            :=
            \mathbf 1_{B_{\R^n}(0,R)}(x)h(x).
        \]
	The input assumptions imply that $h$ is bounded and nonnegative.
	Thus $h_R$ is an admissible input in Theorem~\ref{thm:sec4-mixed-heat-semigroup-preservation}.
	Apply that theorem at $x=0$.
	Formula~\eqref{eq:sec4-mixed-fiber-heat-flow} gives
    	\[
    	[e^{t\Delta}(f_V\circ P_V)](0)=(4\pi t)^{-\dim V/2}\int_Ve^{-|y|^2/(4t)}f_V(y)\,d\lambda_V(y).
    	\]
	The identity \eqref{eq:sec4-mixed-dimension-identity} implies
    	\begin{equation}
        	\exp
                \left(
                    -\frac{\log(4\pi t)}{2}
                    \int_{\mathcal G_n}
                        \dim V
                        \,d\mu(V)
                \right)
        	=
            (4\pi t)^{-n/2}.
    	\label{eq:sec4-mixed-heat-normalization}
    	\end{equation}
	Multiplying the preservation inequality by $(4\pi t)^{n/2}$ therefore gives
    	\begin{equation}
        	\int_{\R^n}
                e^{-|y|^2/(4t)}h_R(y)
            \,dy
        	\le
            \exp
                \left(
                    \int_{\mathcal G_n}
                    \log 
                        \Big(
                        \int_V
                            e^{-|y|^2/(4t)}
                            f_V(y)
                        \,d\lambda_V(y)
                        \Big)
                    \,d\mu(V)
                \right).
    	\label{eq:sec4-mixed-normalized-preservation}
    	\end{equation}
	As $t\to\infty$, we have
    	\[
        	\int_V
                e^{-|y|^2/(4t)}f_V(y)
            \,d\lambda_V(y)
            \longrightarrow 
            \int_V
                f_V(y)
            \,d\lambda_V(y).
    	\]
	Passing to the limit in \eqref{eq:sec4-mixed-normalized-preservation} yields
    	\[
    	\int_{\R^n}
            h_R(x)
        \,dx
    	\le
        \exp
            \left(
                \int_{\mathcal G_n}
                    \log 
                        \Big(
                        \int_V
                            f_V(y)
                        \,d\lambda_V(y)
                        \Big)
                \,d\mu(V)
            \right).
    	\]
	Finally, $h_R\uparrow h$ as $R\to\infty$.
	Monotone convergence proves \eqref{eq:sec4-forward-mixed-rank-bl}.
\end{proof}

\section*{Acknowledgements}

The author was supported by JST SPRING, Japan Grant Number JPMJSP2108. The author would like to thank his advisor Neal Bez for weekly discussions throughout this project.
The author also thanks Hiroshi Tsuji for valuable discussions.

\section*{Declaration of Generative AI Use}

During the preparation of this manuscript, the author used generative AI as an assistive tool for checking and improving English grammar, improving LaTeX formatting, and identifying and correcting typographical errors.
\bibliographystyle{plainnat}
\bibliography{bib}

\end{document}